\numberwithin{equation}{section}
\theoremstyle{plain}
\newtheorem{lemma}{Lemma}[section]
\newtheorem{theorem}[lemma]{Theorem}
\newtheorem{proposition}[lemma]{Proposition}
\newtheorem{corollary}[lemma]{Corollary}
\newtheorem{claim}[lemma]{Claim}
\theoremstyle{definition}
\newtheorem{definition}[lemma]{Definition}
\newtheorem*{theorem*}{Theorem}
\newtheorem*{corollary*}{Corollary}
\theoremstyle{remark} 
\newtheorem{remark}[lemma]{Remark} 
\newtheorem*{claim*}{Claim} 
\renewcommand{\dim}{\operatorname{dim}}
\newcommand{\ep}{\varepsilon}
\newcommand{\Ext}{\operatorname{Ext}}
\renewcommand{\ge}{\geqslant}
\renewcommand{\geq}{\geqslant}
\newcommand{\half}{{\textstyle\frac{1}{2}}}
\newcommand{\shalf}{{\scriptstyle\frac{1}{2}}}
\newcommand{\Hom}{\operatorname{Hom}}
\renewcommand{\le}{\leqslant}
\renewcommand{\leq}{\leqslant}
\newcommand{\Rad}{\operatorname{\mathsf{Rad}}}
\newcommand{\Soc}{\mathsf{Soc}}
\newcommand{\Spec}{\operatorname{Spec}}
\newcommand{\StMod}{\operatorname{\mathsf{StMod}}}
\renewcommand{\setminus}{\smallsetminus}
\def\mcP{\mathcal{P}}
\def\bbA{\mathbb A}
\def\bbF{\mathbb F} 
\def\bbG{\mathbb G}
\def\bbZ{\mathbb Z} 
\def\mfm{\mathfrak m}
\newcommand{\bss}{\boldsymbol{s}}
\newcommand{\cP}{\mathscr{P}}
\title[A family of finite supergroup schemes]{Representations and cohomology of a family of finite
  supergroup schemes}
\author[Benson and Pevtsova]{Dave Benson and Julia Pevtsova}
\address{Dave Benson \\ 
Institute of Mathematics\\ 
University of Aber\-deen\\ 
King's College\\ 
Aber\-deen AB24 3UE\\ 
Scotland U.K.}
\address{Julia Pevtsova\\ 
Department of Mathematics\\ 
University of Washington\\ 
Seattle, WA 98195\\ 
U.S.A.}
\begin{document}

\begin{abstract} 
We examine the cohomology and representation theory of a family of finite supergroup
schemes of the form 
$(\bbG_a^-\times \bbG_a^-)\rtimes (\bbG_{a(r)}\times (\bbZ/p)^s)$. 
In particular, we show that a certain relation holds in the cohomology
ring, and deduce that for finite supergroup schemes having this as a
quotient, both cohomology mod nilpotents and projectivity
of modules is detected on proper sub-super\-group schemes. This 
special case feeds into the proof of a more general detection theorem for unipotent finite
supergroup schemes, in a separate work of the authors joint with
Iyengar and Krause.

We also completely determine the cohomology ring in the smallest
cases, namely $(\bbG_a^- \times \bbG_a^-) \rtimes \bbG_{a(1)}$ and
$(\bbG_a^- \times \bbG_a^-) \rtimes \bbZ/p$. The computation uses the
local cohomology spectral sequence for group cohomology, which we
describe in the context of finite supergroup schemes.
\end{abstract}

\keywords{cohomology, finite supergroup scheme, invariant theory,
Steenrod operations, local cohomology spectral sequence}
\subjclass[2010]{16A61 (primary); 16A24, 20G10, 20J06 (secondary)}

\date{\today}

\thanks{This work was supported by the NSF grant DMS-1440140 while the
  authors were in residence at the MSRI. 
The second author was 
partially supported by the DMS-0500946 award and by the Simons foundation}

\maketitle

\setcounter{tocdepth}{1}
%\tableofcontents

\section{Introduction}

The calculations in this paper are motivated by the problem of detecting nilpotents 
in cohomology theories which has a long history. In algebraic topology, the celebrated nilpotence theorem 
in the stable homotopy category is due to Devinatz--Hopkins--Smith. 
For mod-$p$ finite group cohomology, Quillen showed 
that nilpotence is detected upon restriction to elementary abelian subgroups. Suslin proved an analogue of 
Quillen's detection theorem for cohomology of finite group schemes where the detection family consisted of 
abelian finite groups schemes isomorphic to $\bbG_{a(r)} \times (\bbZ/p)^s$ (preceded by the work of Suslin-Friedlander-Bendel \cite{Bendel/Friedlander/Suslin:1997b} on infinitesimal finite group schemes and  Bendel \cite{Bendel:2001a}
on unipotent finite group schemes). 

In joint work with Iyengar and Krause 
\cite{Benson/Iyengar/Krause/Pevtsova:bikp5}, we study the question of 
detecting nilpotents in the cohomology of a finite supergroup scheme, or
equivalently, a finite dimensional $\bbZ/2$-graded cocommutative Hopf superalgebra.
We establish a detecting family in the case of a unipotent finite supergroup scheme
which turns out to have a surprisingly more complicated structure than what one sees in 
the ungraded case in the detection theorems of Quillen and Suslin. A particularly difficult 
case arising in the course of the proof of the detection theorem in 
\cite{Benson/Iyengar/Krause/Pevtsova:bikp5}
is that of the degree two cohomology class determined 
by the central extension of $\bbG_a^- \times \bbG_{a(r)}\times
(\bbZ/p)^s$ by $\bbG_a^-$, where $\bbG_a^-$ is a supergroup scheme corresponding 
to the exterior algebra of a one dimensional super vector space concentrated in odd degree.  
The outcome of this paper, which feeds into the proof of the general result in 
\cite{Benson/Iyengar/Krause/Pevtsova:bikp5},
is that a certain product vanishes in cohomology but this relation does not 
follow in the usual way from the action of the Steenrod operations. 

In the course of producing the desired relation, we study the representation theory and cohomology of
finite supergroup schemes of the form $(\bbG_a^-\times \bbG_a^-)\rtimes
(\bbG_{a(r)} \times (\bbZ/p)^s)$, where the complement $\bbG_{a(r)} \times
(\bbZ/p)^s$ is acting faithfully on the normal sub-super\-group scheme $\bbG_a^- \times \bbG_a^-$.
We also obtain a great deal of information about the smallest case, computing 
the cohomology ring of  $(\bbG_a^-\times\bbG_a^-)\rtimes \bbZ/p$, which is our first result, proved in 
Section \ref{se:Ga(1)}. Note that for supergroup schemes, the cohomology is doubly graded: 
we write $H^{i,j}(G,k)$, where
the index $i \in \bbZ$ is cohomological, and the index $j\in\bbZ/2$
comes from the internal grading. 

\begin{theorem}[Theorem~\ref{th:Ga1} and Remark~\ref{re:Zp}]\label{th:Coho-ring}

Let $G$ be either $(\bbG_a^-\times\bbG_a^-)\rtimes \bbZ/p$ or
$(\bbG_a^-\times\bbG_a^-)\rtimes \bbG_{a(1)}$, each one being a semidirect product
with non-trivial action. Then the cohomology ring $H^{*,*}(G,k)$ is Gorenstein with the 
Poincar\'e series given by
\[ \sum_n t^n \dim_k H^{n,*}(G,k) = 1/(1-t)^2. \]
The algebra structure is given as follows. The generators are
\[ \zeta\in H^{1,1}(G,k),\ x\in H^{2,0}(G,k), \ \kappa\in
  H^{p,1}(G,k),\ \lambda_i\in H^{i,1+i}(G,k)\ (1\le i \le p-1). \] 
The relations are
\[ \lambda_i\zeta=0\ (1\le i\le p-1),\quad x\zeta^{p-1}=0, \quad
\lambda_i\lambda_j=\begin{cases} x\zeta^{p-2} & i+j=p \\
0 & \text{otherwise.} \end{cases} \]
\end{theorem}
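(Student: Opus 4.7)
My plan is to use the Hochschild--Serre spectral sequence $E_2^{i, q} = H^i(Q, H^q(N, k)) \Rightarrow H^{i+q}(G, k)$ for the split extension $1 \to N \to G \to Q \to 1$, with $N = \bbG_a^- \times \bbG_a^-$ and $Q = \bbG_{a(1)}$ (or $\bbZ/p$). Since $kN$ is a tensor product of two exterior algebras on odd generators, the cohomology $H^{*,*}(N, k) = k[y_1, y_2]$ is a polynomial algebra on generators $y_1, y_2 \in H^{1, 1}$. The faithful action of $Q$ on $N$ induces a Jordan-block action on $V = k y_1 + k y_2$: after normalization, the generator $u$ of $kQ = k[u]/u^p$ satisfies $u y_1 = 0$ and $u y_2 = y_1$.

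To compute $E_2$ I decompose the symmetric powers $S^q V$ as $k[u]/u^p$-modules via modular invariant theory: $S^{np+r} V \cong J_p^n \oplus J_{r+1}$ for $0 \le r \le p-2$, and $S^{(n+1)p-1} V \cong J_p^{n+1}$, where $J_m$ denotes the indecomposable $k[u]/u^p$-module of dimension $m$. Using $H^{\geq 1}(Q, J_p) = 0$ by projectivity of $J_p$, and $H^i(Q, J_m) = k$ for all $i \ge 0$ and $1 \le m \le p-1$, the $E_2$-page is completely described, and a direct summation yields
\[ \sum_{i, q \ge 0} \dim E_2^{i, q}\, t^{i + q} = \frac{1}{(1-t)(1-t^p)} + \frac{t(1 - t^{p-1})}{(1-t)^2 (1-t^p)} = \frac{1}{(1-t)^2}. \]

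My strategy is then to construct the generators directly as classes in $H^{*, *}(G, k)$, verify the relations claimed in the theorem, and deduce the ring structure and the collapse of the spectral sequence simultaneously. The classes $\zeta = y_1 \in E_2^{0, 1}$ and $\kappa = y_2^p - y_2 y_1^{p-1} \in E_2^{0, p}$ lift to $H^{*, *}(G, k)$ by the standard extension of $Q$-invariant cocycles on $kN$ to cocycles on $kG$ available in a split extension; $x \in E_2^{2, 0}$ and $\lambda_1 \in E_2^{1, 0}$ come from $H^*(Q, k) = k[x] \otimes \Lambda(\lambda_1)$ via inflation along $G \twoheadrightarrow Q$; for $2 \le i \le p-1$, $\lambda_i$ is a lift of the unique nonzero class in the one-dimensional $E_2^{1, i-1} = H^1(Q, J_i)$. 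A direct monomial count shows that the subring $R$ generated by these classes subject to the relations in the theorem has Poincar\'e series $1/(1-t)^2$. Since $R \subseteq H^{*, *}(G, k)$ and the spectral sequence upper bound gives $\dim H^n(G, k) \le \dim E_2^n = n+1$, the dimensions are forced to match, yielding $H^{*, *}(G, k) = R$ and collapse at $E_2$.

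The main obstacle is proving the relations hold in $H^{*, *}(G, k)$. The relations $\lambda_i \zeta = 0$ reduce to the observation that cup product by $\eta = \lambda_1 \in H^1(Q, k)$ sends the socle $H^0(Q, J_m) = k \cdot y_1^{m-1}$ to zero in $H^1(Q, J_m) = J_m/uJ_m$, since $y_1^{m-1} = u \cdot (y_1^{m-2} y_2)$ lies in $u J_m$; this is a short chain-level cup product computation using a diagonal approximation on the periodic resolution of $k$ over $kQ$. The relation $x \zeta^{p-1} = 0$ is immediate from $E_2^{\geq 1, p-1} = 0$ by projectivity of $J_p$. The crux is the relation
\[ \lambda_i \lambda_j = \begin{cases} x \zeta^{p-2} & \text{if } i + j = p, \\ 0 & \text{otherwise}, \end{cases} \]
which places both sides in the one-dimensional space $E_2^{2, i+j-2}$ (when nonzero), so the assertion amounts to computing a scalar. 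This requires an explicit chain-level cup product calculation combined with the Clebsch--Gordan-style decomposition of $J_i \otimes J_j$ as a $kQ$-module and its projection onto the multiplication summand $S^{i+j-2} V$: the scalar is forced to be nonzero precisely when $i + j = p$, because of the specific way the $J_{p-1}$-summand appears in the decomposition of $J_i \otimes J_{p-i}$.
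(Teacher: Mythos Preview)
Your approach is genuinely different from the paper's and contains a very good idea, but as written it has a circularity that needs to be addressed.

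\textbf{The circularity.} You construct $\kappa$ and $\lambda_i$ ($i\ge 2$) as ``lifts'' of classes in $E_2^{0,p}$ and $E_2^{1,i-1}$, and you verify the relations $\lambda_i\zeta=0$, $\lambda_i\lambda_j=0$ ($i+j\ne p$) by cup-product computations in $E_2$. But a priori, neither step is legitimate: a class in $E_2$ lifts to $H^{*,*}(G,k)$ only if it survives to $E_\infty$, and an $E_2$-level vanishing of a product only says the product lies in higher filtration, not that it is zero in $H^{*,*}(G,k)$. Your dimension-count conclusion ``$R\subseteq H^{*,*}(G,k)$ and $\dim H^n\le n+1$ forces equality'' presupposes that the map $R\to H^{*,*}(G,k)$ is injective, which again needs the monomials to be linearly independent in $H^{*,*}(G,k)$---and that is not automatic from relations holding. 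Also, your sentence ``by the standard extension of $Q$-invariant cocycles on $kN$ to cocycles on $kG$ available in a split extension'' is not a valid general principle: in a split extension the restriction $H^*(G)\to H^*(N)^Q$ need not be surjective. (A minor point: your argument for $\lambda_i\zeta=0$ actually shows $\lambda_1\cdot\zeta^{i-1}=0$; the correct computation is that $[\eta^{i-1}]\cdot\zeta=[\eta^{i-1}\zeta]$ vanishes in $S^i/uS^i$ since $\eta^{i-1}\zeta=\tfrac{1}{i}u(\eta^i)$.)

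\textbf{The fix, and what it buys.} There is one clean observation that repairs everything: since $kN=\Lambda(V)$ is Koszul, the minimal $kN$-resolution $P_\bullet=\Lambda(V)\otimes S^\bullet(V)$ of $k$ is $Q$-equivariant (functorial in $V$) and its dual $\Hom_{kN}(P_\bullet,k)=S^\bullet(V^\#)$ has \emph{zero} differential. Thus the double complex computing $H^{*,*}(G,k)$ has vanishing vertical differential, so the Lyndon--Hochschild--Serre spectral sequence collapses \emph{multiplicatively} at $E_2$, and in fact
\[
H^{*,*}(G,k)\ \cong\ \bigoplus_{q\ge 0} H^*(Q,S^q(V^\#))
\]
as a graded ring, with the cup product on the right. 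Once you insert this formality step, your argument becomes rigorous: the lifts exist, the $E_2$-level relations are genuine relations in $H^{*,*}(G,k)$, and your explicit cup-product computation (the sum $\sum_{0\le k<l\le p-1}(g^k\eta^{i-1})(g^l\eta^{j-1})$, reduced to the power-sum identity $\sum_{l=1}^{p-1}l^m\equiv -1$ iff $(p-1)\mid m$) does establish $\lambda_i\lambda_{p-i}=x\zeta^{p-2}$ directly.

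\textbf{Comparison with the paper.} The paper does \emph{not} use this formality. Instead it runs the spectral sequence of the \emph{central} extension $1\to\bbG_a^-\to G\to\bbG_a^-\times\bbG_{a(1)}\to 1$, where there are nontrivial differentials ($d_2(\eta)=\lambda\zeta$ and a crucial $d_p$), and constrains them using the $\mu_p\times\mu_p$-weights. The hardest step---the nonvanishing of $\lambda_i\lambda_{p-i}$---is obtained indirectly: the paper first proves $H^{*,*}(G,k)$ is Cohen--Macaulay, then invokes the local cohomology spectral sequence to deduce Gorenstein duality, and finally reads off the nonvanishing from Poincar\'e duality in the quotient by a regular sequence. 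Your route, once the Koszul collapse is in place, is more elementary for this particular $G$ and bypasses the local cohomology machinery entirely; the paper's route, on the other hand, sets up the Gorenstein/duality framework that is reused throughout the paper and in the sequel.
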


One of the techniques we employ for this calculation is the local cohomology 
spectral  sequence which Greenlees~\cite{Greenlees:1995a} developed in the context of
cohomology of finite groups. This turns out to be essential to determine that  the product 
$\lambda_i\lambda_{p-i}$ is non-zero. For finite supergroup schemes, this spectral sequence takes
the form below, incorporating the modular function $\delta_G$ (see Section~\ref{se:local}). 

\begin{theorem}[Corollary~\ref{cor:local-coh} and Corollary~\ref{co:Gorenstein}]
Let $G$ be a finite supergroup scheme. Then there is a local
cohomology spectral sequence
\[ E_2^{s,t,j} = H^{s,t,j}_\mfm H^{*,*}(G,M) \Rightarrow
  H_{-s-t,j+\ep_G}(G,M \otimes\delta_G). \]
Here, the third index $j\in\bbZ/2$ is given by the internal grading, 
and $\delta_G$ is the \emph{modular function} of internal degree $\ep_G \in \mathbb Z/2$. 

Suppose that $\delta_G$ is trivial, which happens for example in the case 
where $G$ is unipotent. In this case, if $H^{*,*}(G,k)$ is Cohen--Macaulay
then it is Gorenstein, with shift $(0,\ep_G)$.
\end{theorem}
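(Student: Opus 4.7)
The plan is to adapt Greenlees's construction of the local cohomology spectral sequence for finite groups \cite{Greenlees:1995a} to the finite supergroup setting, carrying along the additional internal $\bbZ/2$-grading throughout. The essential input is Tate duality for the finite-dimensional cocommutative Hopf superalgebra $kG$: since $kG$ is Frobenius as a superalgebra, its Nakayama automorphism is encoded by the modular function $\delta_G$, which sits in internal parity $\ep_G$. This yields a natural duality between $\Ext^*_{kG}(k, M)$ and $\Tor^{kG}_*(k, M \otimes \delta_G)$ in which the internal $\bbZ/2$-grading is shifted by $\ep_G$.

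To construct the spectral sequence I would take the stable Koszul (\v{C}ech) complex $\check C^\bullet(\mfm)$ on a homogeneous system of parameters in the Noetherian graded ring $R = H^{*,*}(G,k)$, tensor it with the graded module $H^{*,*}(G, M)$, and filter by Koszul degree. The resulting spectral sequence has $E_2$-page $H^{s,t,j}_\mfm H^{*,*}(G, M)$. The abutment is identified by lifting the construction to the derived category of $kG$-modules, where the stable Koszul complex computes derived $\mfm$-torsion; Tate duality then converts the $\Ext$-valued answer into group homology with a twist by $\delta_G$ and a parity shift by $\ep_G$, giving the stated abutment $H_{-s-t,\, j+\ep_G}(G, M \otimes \delta_G)$.

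For the Gorenstein corollary, suppose $\delta_G$ is trivial, which in particular holds when $G$ is unipotent, since then $kG$ is local and its only group-like element is $1$. If $R = H^{*,*}(G,k)$ is Cohen--Macaulay of Krull dimension $d$, then $H^{s,*,*}_\mfm R = 0$ for $s \neq d$, so the spectral sequence with $M = k$ collapses at $E_2$ to the single column $s = d$. Reading off the abutment and applying Tate duality a second time converts the top local cohomology $H^{d,*,*}_\mfm R$ into a bigraded shift of $R$ itself, with the shift $(0,\ep_G)$ prescribed by the internal parity contributed by the duality. This is precisely the Gorenstein condition with the claimed shift.

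The step I expect to be the main obstacle is the bookkeeping: ensuring that the $\bbZ/2$-grading and the modular function are tracked consistently throughout. Verifying that Tate duality for the Frobenius superalgebra $kG$ contributes exactly the twist by $\delta_G$ and exactly the parity shift $\ep_G$, rather than some other automorphism or parity, requires an explicit description of the Nakayama form in the super setting. This is what genuinely distinguishes the argument from the classical Greenlees construction for ordinary finite groups.
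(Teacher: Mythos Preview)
Your approach is essentially the paper's: adapt Greenlees's construction to the super setting, carrying the internal $\bbZ/2$-grading, and identify the abutment via Tate duality for the Frobenius superalgebra $kG$. The paper phrases the duality step through Auslander--Reiten duality and the Nakayama functor $\nu(-)=-\otimes\delta_G$, which is equivalent to your description via the Nakayama automorphism of a Frobenius algebra.

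One imprecision in your Gorenstein paragraph is worth correcting. You write that ``applying Tate duality a second time converts the top local cohomology $H^{d,*,*}_\mfm R$ into a bigraded shift of $R$ itself.'' That is not what happens, and there is no second application of Tate duality. After the Cohen--Macaulay collapse, the edge map identifies $H^d_\mfm R$ with group homology $H_{*,*+\ep_G}(G,k)$; the remaining input is simply that homology is the $k$-linear dual of cohomology. Hence $H^d_\mfm R$ is a shift of the graded dual $R^\vee=\Hom_k(R,k)$, which is the injective hull of $k$ as a graded $R$-module, and \emph{that} is the Gorenstein condition with the stated shift. The module $H^d_\mfm R$ is artinian, not a shift of $R$ itself.
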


We also, along the way, make some computations of the structure of the
symmetric powers of a faithful two dimensional representation $V$ of
$\bbG_{a(r)} \times (\bbZ/p)^s$. We state it in terms of the dual 
$V^\#$, because we are interested in cohomology. In the case of
$(\bbZ/p)^s$ this is well known by restricting from
$\mathrm{SL}(2,p^s)$, whereas in the case of the Frobenius kernel, the 
results follow by restricting from $\mathrm{SL}_{2(r)}$ (see, for example, \cite[II.2.16]{Jantzen:2003a}). 
The following is a tabulation of the results proved in Section \ref{se:sympowers}.

\begin{theorem}\label{th:sympowers}
Let $V$ be a faithful two dimensional representation of $H=\bbG_{a(r)}
\times (\bbZ/p)^s$, $V^\#$ be the dual vector space, and $S^n(V^\#)$ be the module of degree $n$
polynomial functions on $V$,
\begin{itemize}
\item[\rm (i)] {\rm Periodicity:} For $n\ge p^{r+s}$ we have $S^n(V^\#)\cong kH
  \oplus S^{n-p^{r+s}}(V^\#)$, where $kH$ is the group algebra of the finite group scheme $H$.
\item[\rm (ii)] {\rm Projectivity:} $S^n(V^\#)$ is a projective module
  if and only if $n$ is congruent to $-1$ modulo $p^{r+s}$.
\item[\rm (iii)] {\rm Uniserial:} For $1\le i \le p-1$, the module
  $S^i(V^\#)$ is a uniserial module of dimension $i+1$. 
\item[\rm (iv)] {\rm Steinberg tensor product:} For $1\le i\le r+s$
  the module $S^{p^i-1}(V^\#)$ is isomorphic to the tensor product of
  Frobenius twists $S^{p-1}(V^\#) \otimes S^{p-1}(V^\#)^{(1)} \otimes
  \dots \otimes S^{p-1}(V^\#)^{(i-1)}$.
\item[\rm (v)] {\rm Rank variety:} The rank variety of
  $S^{p^i-1}(V^\#)$ is an explicitly described linear subspace of
  affine space $\bbA^{r+s}$ of codimension $i$.
\end{itemize}
\end{theorem}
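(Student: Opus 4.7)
The plan is to realize the faithful two-dimensional representation $V$ as the restriction to $H$ of the defining representation of $\mathrm{SL}_2$ along an embedding $H \hookrightarrow \bbG_a \hookrightarrow \mathrm{SL}_2$, in which $\bbG_a$ sits as the upper-triangular unipotent subgroup. Such an embedding exists because $H = \bbG_{a(r)} \times (\bbZ/p)^s$ is unipotent, so any faithful two-dimensional representation is upper-triangular in suitable coordinates; the embedding $H \hookrightarrow \bbG_a$ is specified by a generator of $k[H]$ as a $k$-algebra, which for the $(\bbZ/p)^s$-part amounts to a choice of $\bbF_p$-linearly independent elements $\alpha_1, \ldots, \alpha_s \in k$. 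With this realization $S^n(V^\#) = k[X, Y]_n$, with the $H$-action inherited from the $\bbG_a$-action $X \mapsto X$, $Y \mapsto Y + tX$, so that $X$ is automatically $H$-invariant.

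Parts (iii) and (iv) are handled by direct means. For (iii), the divided-power element $u_1 = t^{(1)} \in k\bbG_a$ (lying in the image of $kH$) acts on $S^i(V^\#)$ by $u_1 \cdot X^a Y^b = b\,X^{a+1} Y^{b-1}$. For $1 \le i \le p-1$, all the factors $1, 2, \ldots, i$ are units modulo $p$, so this is a single Jordan block of size $i+1$, and the ascending chain of $u_1$-images is the unique composition series, yielding uniseriality. For (iv), Steinberg's tensor product theorem for $\mathrm{SL}_2$ gives $L(p^i - 1) = \bigotimes_{j=0}^{i-1} L(p-1)^{(j)}$; matching dimensions and highest weights identifies $L(p^i - 1)$ with $S^{p^i - 1}(V^\#)$ and $L(p-1)$ with $S^{p-1}(V^\#)$. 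Since Frobenius commutes with restriction along a closed immersion, this decomposition persists after restriction to $H$.

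For (i) and (ii), the crucial intermediate result is that $S^{p^{r+s} - 1}(V^\#) \cong kH$ as $H$-modules. Both sides have dimension $p^{r+s}$, and $kH$ is a Frobenius algebra with one-dimensional socle (as $H$ is unipotent), hence equal to the injective envelope of the trivial module; any module of dimension $|H|$ with one-dimensional socle is therefore isomorphic to $kH$. Using Lucas's theorem, the $\bbG_{a(r)}$-invariants in $k[X, Y]$ are $k[X, Y^{p^r}]$, and an iterated Artin--Schreier computation then shows that the $(\bbZ/p)^s$-invariants of $k[X, Y^{p^r}]$ form a polynomial ring $k[X, Z]$ for an explicit $Z$ of degree $p^{r+s}$. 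In total degree $p^{r+s} - 1$ only the monomial $X^{p^{r+s} - 1}$ survives, giving the required one-dimensional socle. Periodicity (i) then follows by induction on $n$: construct an $H$-equivariant short exact sequence $0 \to S^{n - p^{r+s}}(V^\#) \to S^n(V^\#) \to kH \to 0$ and split it using the projectivity of $kH$. For (ii), writing $n = m p^{r+s} + n_0$ with $0 \le n_0 < p^{r+s}$, we obtain $S^n(V^\#) \cong kH^{\oplus m} \oplus S^{n_0}(V^\#)$; since $\dim S^{n_0}(V^\#) = n_0 + 1 < p^{r+s}$ whenever $n_0 < p^{r+s} - 1$, the summand $S^{n_0}(V^\#)$ is not free, so $S^n(V^\#)$ is projective exactly when $n \equiv -1 \pmod{p^{r+s}}$.

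Part (v) follows from (iv) combined with the tensor-product formula for rank varieties: each factor $S^{p-1}(V^\#)^{(j)}$ has rank variety equal to an explicit hyperplane in $\bbA^{r+s}$, cut out by a linear form on the coordinates of a generic $\pi$-point that tracks which generator of $kH$ acts nontrivially after a $j$-fold Frobenius twist; the $i$ hyperplanes arising from $j = 0, \ldots, i-1$ are linearly independent, so their intersection is a codimension-$i$ linear subspace. The hardest step will be the construction of the splitting short exact sequence in part (i): multiplication by $X^{p^{r+s}}$ provides an $H$-equivariant injection $S^{n - p^{r+s}}(V^\#) \hookrightarrow S^n(V^\#)$ but its cokernel generally has socle of dimension greater than one (for example, $Y^n$ becomes $H$-invariant modulo $X^{p^{r+s}}$ for many $n$ by Lucas's theorem), so it is not isomorphic to $kH$. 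A more delicate construction, using the explicit Steinberg-type form of the required $kH$-summand, will be needed.
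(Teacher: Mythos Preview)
Your overall strategy matches the paper's closely for parts (ii)--(v). In particular, the socle computation for $S^{p^{r+s}-1}(V^\#)$ via the invariant ring, the deduction of (ii) from (i), the tensor-product formula for rank varieties in (v), and the uniseriality argument in (iii) are all essentially what the paper does. For (iv) you appeal to Steinberg's theorem for $\mathrm{SL}_2$ and restrict; the paper instead gives a direct two-line proof, regarding $S^{p-1}(V^\#)^{(j)}$ as the span of $p^j$th powers and observing that the multiplication map from the tensor product to $S^{p^i-1}(V^\#)$ is an isomorphism by inspecting monomials. Either route is fine.

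The gap you flag in (i) is real but the fix is already in your hands, and it is exactly what the paper does. You have computed $k[X,Y]^H = k[X,Z]$ with $Z$ an explicit invariant of degree $p^{r+s}$ whose leading term in $Y$ is $Y^{p^{r+s}}$. Replace your multiplication-by-$X^{p^{r+s}}$ map by multiplication by $Z$: this gives an $H$-equivariant injection $S^{n-p^{r+s}}(V^\#)\hookrightarrow S^n(V^\#)$ whose image, by the leading-term argument, consists of elements with leading $Y$-degree at least $p^{r+s}$. Pair this with a \emph{second} injection, multiplication by the invariant monomial $X^{\,n+1-p^{r+s}}$, which embeds $S^{p^{r+s}-1}(V^\#)\cong kH$ into $S^n(V^\#)$ with image consisting of monomials of $Y$-degree at most $p^{r+s}-1$. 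The two images therefore intersect trivially and, by a dimension count, span; so $S^n(V^\#)$ is their internal direct sum. No short exact sequence or splitting step is needed, and the difficulty you identified with the cokernel of $X^{p^{r+s}}$-multiplication simply disappears: the point is to multiply by the genuine invariant $Z$ rather than by its leading monomial.
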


Using Theorem \ref{th:sympowers} to make some spectral sequence computations, 
the following theorem is proved in Section \ref{se:proof}.

\begin{theorem}[Theorem~\ref{th:main}]
Let $k$ be a field of odd prime characteristic, and let $G$ be the
finite supergroup scheme $(\bbG_a^-\times \bbG_a^-)\rtimes(\bbG_{a(r)}
\times  (\bbZ/p)^s)$. Then there is a non-zero element $\zeta\in
H^{1,1}(G,k)$ such that
for all $u\in H^{1,0}(G,k)$ we have $\beta\cP^0(u).\zeta^{p^{r+s-1}(p-1)}=0$.
\end{theorem}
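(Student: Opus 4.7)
The plan is to apply the Lyndon-Hochschild-Serre spectral sequence
$$E_2^{i, j} = H^i(H, H^j(N, k)) \Longrightarrow H^{i+j}(G, k)$$
to the extension $1 \to N \to G \to H \to 1$, where $N = \bbG_a^- \times \bbG_a^-$ and $H = \bbG_{a(r)} \times (\bbZ/p)^s$. Since the group algebra of $N$ is the exterior algebra on two odd generators, $H^{*, *}(N, k) = k[\zeta_1, \zeta_2]$ is a bigraded polynomial ring with each $\zeta_i$ of bidegree $(1, 1)$, and in cohomological degree $j$ one has $H^j(N, k) \cong S^j(V^\#)$ as an $H$-module, where $V^\#$ is the faithful two-dimensional representation analyzed in Theorem~\ref{th:sympowers}. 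Because $H$ is a unipotent $p$-group acting faithfully on $V^\#$, the invariant subspace $(V^\#)^H$ is one-dimensional; let $\zeta$ denote a generator. Internal-grading considerations force $d_r(\zeta) = 0$ for all $r$: for $r = 2$ the potential target $E_2^{2, 0} = H^2(H, k)$ lies entirely in internal degree $0$ while $\zeta$ has internal degree $1$, and for $r \ge 3$ the target $E_r^{r, 2-r} = 0$ for degree reasons. Hence $\zeta$ is a permanent cycle and defines the required nonzero class in $H^{1, 1}(G, k)$. The same argument shows that any $u \in H^{1, 0}(G, k)$ pulls back from some $\widetilde u \in H^1(H, k)$, so $\beta\cP^0(u)$ pulls back from $\beta\cP^0(\widetilde u) \in H^2(H, k)$.

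With $n = p^{r+s-1}(p-1)$, the product $\beta\cP^0(u) \cdot \zeta^n$ has associated graded representative in $E_\infty^{2, n}$ equal to the image of $\beta\cP^0(\widetilde u)$ under the cohomology map
$$\zeta^n_* \col H^2(H, k) \lto H^2(H, S^n(V^\#))$$
induced by the $H$-linear map $k \to S^n(V^\#)$, $1 \mapsto \zeta^n$. The key use of Theorem~\ref{th:sympowers} is its Steinberg tensor product statement (iv): the map $V^{\#(r+s-1)} \hookrightarrow S^{p^{r+s-1}}(V^\#)$ given by $\zeta \mapsto \zeta^{p^{r+s-1}}$ and $\eta \mapsto \eta^{p^{r+s-1}}$ induces an $H$-linear embedding $S^{p-1}(V^\#)^{(r+s-1)} \hookrightarrow S^n(V^\#)$ that carries the socle element $\zeta^{p-1}$ of the Frobenius twist onto $\zeta^n$. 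Hence $\zeta^n_*$ factors through $H^2(H, S^{p-1}(V^\#)^{(r+s-1)})$, and it suffices to show the intermediate map from $H^2(H, k)$ vanishes on Bockstein classes. In the base case $r + s = 1$, Theorem~\ref{th:sympowers}(ii) gives $S^{p-1}(V^\#) = kH$ projective, so the intermediate cohomology vanishes outright in positive degrees, consistent with the relation $x\zeta^{p-1} = 0$ of Theorem~\ref{th:Coho-ring}. For general $r + s$, I would analyze $H^*(H, S^{p-1}(V^\#)^{(r+s-1)})$ via the Frobenius filtration of $H$: since the twist is inflated along $F^{r+s-1}$, its cohomology can be identified with that of a smaller group, allowing one to track precisely how the polynomial generators $\beta\cP^0(\widetilde u_i)$ of $H^2(H, k)$ map into it.

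The hard part is bridging the gap from vanishing in $E_2^{2, n}$ to vanishing in $H^{n+2, 0}(G, k)$. A priori, zero in the $E_\infty^{2, n}$ quotient only places $\beta\cP^0(u) \cdot \zeta^n$ in higher filtration $F^3 H^{n+2}$, so one must rule out nonzero contributions in $E_\infty^{p, n+2-p}$ for $p \geq 3$. I expect to handle this by iterating the Steinberg factorization on higher pages of the spectral sequence—the cup-product-with-$\zeta^n$ map factors through modules whose cohomology behaves well on Bockstein classes at every filtration level, not just at $E_2$. An alternative would be an explicit chain-level null-homotopy: factoring $k \to S^n(V^\#)$ through an acyclic $H$-module in the bar resolution so as to trivialize $\beta\cP^0(\widetilde u) \cdot \zeta^n$ at the cochain level. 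The base case $r+s = 1$ from Theorem~\ref{th:Coho-ring}, where the cohomology ring is computed explicitly, serves both as a consistency check and as a template for the general argument.
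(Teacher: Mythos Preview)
Your factorization through $S^{p-1}(V^\#)^{(r+s-1)}$ does not do what you want. That module is $p$-dimensional and, for $r+s\ge 2$, non-projective: its rank variety is the single hyperplane given by the last row of the matrix in Theorem~\ref{th:rank-variety}. Correspondingly the map $H^2(H,k)\to H^2\bigl(H,S^{p-1}(V^\#)^{(r+s-1)}\bigr)$ induced by the socle inclusion has only a one-dimensional kernel on the Bockstein classes (the combination $\sum_i\mu_i^{p^{r+s}}z_i$), not all of them. So even at the $E_2$ level your argument yields only one relation, and the filtration problem you flag is in addition to this, not instead of it.

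The paper's argument is organised differently and avoids both issues. It works primarily with the \emph{other} spectral sequence, the one for the central extension $1\to\bbG_a^-\to G\to\bbG_a^-\times H\to 1$, and uses the semidirect-product spectral sequence only as a control. The base-case computation (Corollary~\ref{co:calculation}, proved via the local cohomology spectral sequence and Gorenstein duality) shows that in this spectral sequence $d_p\bigl((\lambda_1+\sum_i\mu_iy_i)\eta^{p-1}\bigr)$ is a \emph{nonzero} element $c\,\zeta^{p-1}$ with $c\in H^{2,0}(H,k)$; since this sits on the bottom row it gives a genuine relation $c\,\zeta^{p-1}=0$ in $H^{*,*}(G,k)$, not merely in an associated graded. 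The semidirect-product spectral sequence together with Theorem~\ref{th:annihilators} then pins $c$ down as a scalar multiple of $-x_1+\sum_i\mu_i^{p}z_i$, since that is the unique (up to scalar) Bockstein combination whose product with $\zeta^{p-1}$ already vanishes in $E_2^{2,p-1}$. Finally one applies $\cP^{\frac{p-1}{2}},\cP^{\frac{p(p-1)}{2}},\dots$ to this single relation in $H^{*,*}(G,k)$ to produce $r+s$ relations whose coefficient matrix is exactly that of Theorem~\ref{th:rank-variety}; its invertibility forces every $x_i\,\zeta^{p^{r+s-1}(p-1)}$ and $z_j\,\zeta^{p^{r+s-1}(p-1)}$ to vanish. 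The passage from one relation to many is thus via Steenrod operations on an honest cohomology relation, not via an $E_2$-level module calculation.
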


The following consequence will be used in our joint work with Iyengar
and Krause \cite{Benson/Iyengar/Krause/Pevtsova:bikp5}.

\begin{corollary}[Corollary~\ref{co:bikp5}]
Let $G$ be a finite unipotent supergroup scheme, with a normal sub-super\-group
scheme $N$ such that $G/N \cong \bbG_a^- \times \bbG_{a(r)} \times
(\bbZ/p)^s$. If the inflation map $H^{1,*}(G/N,k)\to H^{1,*}(G,k)$ is
an isomorphism and $H^{2,1}(G/N,k) \to H^{2,1}(G,k)$ is
not injective then there exists a non-zero element $\zeta\in
H^{1,1}(G,k)$ such that for all $u\in H^{1,0}(G,k)$ we have $\beta\cP^0(u).\zeta^{p^{r+s-1}(p-1)}=0$.
\end{corollary}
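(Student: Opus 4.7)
The plan is to construct from $N$ a smaller normal sub-super\-group scheme $N' \vartriangleleft G$, with $N/N' \cong \bbG_a^-$, so that the quotient $\tilde G := G/N'$ fits the setting of Theorem~\ref{th:main}, and then to transport the resulting vanishing relation back up to $G$ along inflation.

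To produce $N'$ I apply the five-term exact sequence of the Lyndon--Hochschild--Serre spectral sequence for $N \vartriangleleft G$,
\[
0 \to H^{1,*}(G/N,k) \to H^{1,*}(G,k) \to H^{1,*}(N,k)^{G/N} \xrightarrow{\tau} H^{2,*}(G/N,k) \to H^{2,*}(G,k).
\]
The first hypothesis of the corollary forces $\tau$ to be injective, and the second hypothesis produces a non-zero class $\eta \in H^{2,1}(G/N,k)$ lying in the image of $\tau$; say $\eta = \tau(v)$ with $0 \neq v \in H^{1,1}(N,k)^{G/N}$. Since $N$ is unipotent, $v$ is represented by a $G$-equi\-variant homomorphism $N \twoheadrightarrow \bbG_a^-$, and I set $N' := \ker v \vartriangleleft G$, so that $N/N' \cong \bbG_a^-$.

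Let $\tilde G := G/N'$; it sits in a central extension
\[
1 \to \bbG_a^- \to \tilde G \to G/N \cong \bbG_a^- \times \bbG_{a(r)} \times (\bbZ/p)^s \to 1
\]
classified by $\eta$. The Künneth formula, combined with $H^{2,1}(\bbG_a^-,k) = 0$ and the vanishing of odd-degree cohomology for the ordinary group scheme $\bbG_{a(r)} \times (\bbZ/p)^s$, forces $\eta = \xi u$, where $\xi$ generates $H^{1,1}(\bbG_a^-,k)$ and $u$ is non-zero in $H^{1,0}(\bbG_{a(r)} \times (\bbZ/p)^s,k)$. The restriction of $\eta$ to either factor of $G/N$ vanishes, so the extension splits over each factor, giving simultaneously a normal $\bbG_a^- \times \bbG_a^- \vartriangleleft \tilde G$ and a section of $\bbG_{a(r)} \times (\bbZ/p)^s$; hence $\tilde G \cong (\bbG_a^- \times \bbG_a^-) \rtimes (\bbG_{a(r)} \times (\bbZ/p)^s)$, with non-trivial action encoded by $u$.

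Now Theorem~\ref{th:main} applied to $\tilde G$ gives a non-zero $\tilde\zeta \in H^{1,1}(\tilde G,k)$ with $\beta\cP^0(\tilde u) \cdot \tilde\zeta^{p^{r+s-1}(p-1)} = 0$ for every $\tilde u \in H^{1,0}(\tilde G,k)$. Two further applications of the five-term sequence, for $N/N' \vartriangleleft \tilde G$ and for $N' \vartriangleleft G$, make the inflation maps $H^{1,*}(G/N,k) \to H^{1,*}(\tilde G,k) \to H^{1,*}(G,k)$ each injective; since their composition is the hypothesised isomorphism, both are isomorphisms. Set $\zeta := \mathrm{infl}(\tilde\zeta) \in H^{1,1}(G,k)$, which is non-zero, and write any $u \in H^{1,0}(G,k)$ as $\mathrm{infl}(\tilde u)$; naturality of $\beta$, $\cP^0$ and of cup products under inflation then yields
\[
\beta\cP^0(u) \cdot \zeta^{p^{r+s-1}(p-1)} = \mathrm{infl}\bigl(\beta\cP^0(\tilde u) \cdot \tilde\zeta^{p^{r+s-1}(p-1)}\bigr) = 0.
\]
The delicate step is the identification of $\tilde G$ in the third paragraph: the Künneth analysis of $\eta$ has to be carried out carefully, and if the induced action of the complement on $\bbG_a^- \times \bbG_a^-$ should fail to be faithful one must further quotient by the kernel of the action, which only decreases the exponent $p^{r+s-1}(p-1)$ and therefore does not weaken the conclusion.
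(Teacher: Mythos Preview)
Your argument follows the same route as the paper's own proof: extract from the five-term sequence a $G$-invariant surjection $N\twoheadrightarrow\bbG_a^-$, pass to the quotient $\tilde G=G/N'$ where $N'$ is its kernel, identify $\tilde G$ as a semidirect product $(\bbG_a^-\times\bbG_a^-)\rtimes H$ by splitting the resulting central extension over the two factors of $G/N$, invoke Theorem~\ref{th:main}, and transport the conclusion back along inflation. You are somewhat more explicit than the paper in two places --- the K\"unneth computation showing that any nonzero class in $H^{2,1}(G/N,k)$ has the form $\xi u$, and the verification (via the factorisation of injective inflation maps) that inflation on $H^{1,*}$ stays an isomorphism at each stage --- but the skeleton is identical.

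Your closing caveat deserves comment. You are right that Theorem~\ref{th:main} is stated only for a \emph{faithful} action, and nothing in the argument (yours or the paper's) forces the action of $H$ on $\bbG_a^-\times\bbG_a^-$ inside $\tilde G$ to be faithful; the paper's proof simply asserts ``this puts us in the situation of Theorem~\ref{th:main}'' without checking. However, your proposed repair --- quotient $\tilde G$ by the kernel $K$ of the action and accept the smaller exponent --- does not close the gap. After that further quotient the inflation map on $H^{1,0}$ is no longer surjective, so the relation $\beta\cP^0(u)\cdot\zeta^{N}=0$ obtained from Theorem~\ref{th:main} on $\tilde G/K$ only covers those $u$ coming from $H/K$, not all of $H^{1,0}(\tilde G,k)$. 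Concretely, since $K$ is central in $\tilde G$ one has an abelian sub-super\-group scheme $(\bbG_a^-\times\bbG_a^-)\times K$, and for $u\in H^{1,0}(K,k)$ the product $\beta\cP^0(u)\cdot\zeta^{N}$ restricts there to a nonzero monomial, so it cannot vanish in $H^{*,*}(\tilde G,k)$. Thus your fix, as written, is incomplete; the paper has the same lacuna but does not flag it.
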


Throughout this paper, $k$ is a field of odd characteristic. 
Background on finite supergroup schemes can be found in the 
``sister paper" \cite{Benson/Iyengar/Krause/Pevtsova:bikp5}.
 We use \cite{Jantzen:2003a} as our standard reference for affine group schemes and 
their representations.  

\subsection*{Acknowledgements} 
This work started during Dave Benson's visit to the University of Washington in the Summer of 2016 as a distinguished visitor of the Collaborative Research Group in Geometric and Cohomological Methods  in Algebra of the Pacific Institute for Mathematical Sciences. We have enjoyed the hospitality of City University while working on this project in the summers of 2017 and 2018. Finally, we gratefully acknowledge the support and hospitality of the Mathematical Sciences Research Institute in Berkeley, California where we were in residence during the semester on ``Group Representation Theory and Applications" in the Spring of 2018.

%%%%%%%%%%%%%%%%%%%%%%%%SECTION SEMIDIRECT PRODUCTS%%%%%%%%%%%%%%%%%%%
%%%%%%%%%%%%%%%%%%%%%%%%%%%%%%%%%%%%%%%%%%%%%%%%%%%%%%%%%%%%%% 
\section{Semidirect products}\label{se:semidirect}

We begin by recalling, for example from Theorem 2.13 of Molnar \cite{Molnar:1977a},
the Hopf structure on the smash product of cocommutative Hopf algebras. The same
conventions work just as well in the graded cocommutative case, as follows.

Let $B$ be a graded cocommutative Hopf algebras, and $A$ be a Hopf
algebra which is an $B$-module bialgebra, then the tensor product 
coalgebra structure on the smash product $A \# B$ makes it a Hopf
algebra. In more detail, let $\tau\colon B \otimes A \to A$ be the map
giving the action. Then the multiplication on $A\# B$ is
\[ (a \otimes h)(b\otimes g) = \sum(-1)^{|h_{(2)}||b|}
  a\tau(h_{(1)},b)\otimes h_{(2)}g, \]
the comultiplication is
\[ \Delta(a \otimes h) = \sum (-1)^{|h_{(1)}||a_{(2)}|} (a_{(1)} \otimes h_{(1)}) \otimes
  (a_{(2)} \otimes h_{(2)}) \]
and the antipode is
\[ \bss(a \otimes h) = \sum (-1)^{(|a|+|h_{(1)}|)|h_{(2)}|}
  \tau(\bss(h_{(2)}),\bss(a)) \otimes \bss(h_{(1)}). \]
If $A$ is also graded cocommutative, we shall write $A \rtimes B$ for
this construction, and call it the \emph{semidirect product} of $A$
and $B$ with action $\tau$. There are obvious maps of Hopf algebras
\[ \xymatrix{A \ar[r] & A \rtimes B \ar@<.5ex>[r] & B \ar@<.5ex>[l]} \]
forming a split exact sequence.
Theorem~4.1 of the same paper implies that 
any split exact sequence of graded cocommutative Hopf algebras is
isomorphic to a semidirect product.

Recall that if $G$ is a finite supergroup scheme, then its group algebra $kG$ is 
defined as a linear dual to the coordinate algebra $k[G]$. Hence, it is a finite 
dimensional graded cocommutative Hopf algebra (see, for example, 
\cite{Benson/Iyengar/Krause/Pevtsova:bikp5} for more extensive background). 
We denote by $\bbG_a^-$ the supergroup scheme with the (self-dual) coordinate algebra $k[v]/(v^2)$ 
with $v$ an odd primitive element. Recall that $\bbG_{a(r)}$ is the $r$th Frobenius kernel of the 
additive group $\bbG_{a}$, a finite connected group scheme with coordinate algebra $k[T]/(T^{p^r})$ 
with $T$ primitive, and group algebra $k\bbG_{a(r)} = k[s_1,\dots,s_r]/(s_1^p,\dots,s_r^p)$.
The coproduct in $k[s_1,\dots,s_r]/(s_1^p,\dots,s_r^p)$
is given by  \[ \Delta(s_i)=S_{i-1}(s_1\otimes 1,\dots,s_i\otimes 1,\ 1\otimes
s_1,\dots,1\otimes s_i) \] 
where $S_0,S_1,\dots$ are the polynomials defining the addition of Witt vectors (see, for example, \cite[Appendix]{Benson/Iyengar/Krause/Pevtsova:bikp5} for more details). In the 
context of supergroup schemes, we think of $k\bbG_{a(r)}$ as concentrated in even degree.

Getting back to the discussion of the semidirect product, we are interested in the specific case 
where $A$ is the group algebra of $\bbG_a^- \times \bbG_a^-$,
the exterior algebra on two primitive generators $u$ and $v$,
%regarded as a finite supergroup scheme, 
and $B=kH$ is the group algebra of the finite group scheme  $H=\bbG_{a(r)} \times (\bbZ/p)^s$.
Here, either $r$ or $s$, but not both, may be equal to zero.
We assume that $H$ acts faithfully on $\bbG_a^- \times \bbG_a^-$, namely that no proper 
subgroup scheme of $H$ acts trivially on $\bbG_a^- \times \bbG_a^-$,
and we write $G$ for $(\bbG_a^- \times \bbG_a^-)\rtimes (\bbG_{a(r)} \times (\bbZ/p)^s)$.
We let $(\bbZ/p)^s=\langle g_1,\dots,g_s\rangle$, and write 
$t_i=g_i-1\in k(\bbZ/p)^s$, so that $\Delta(t_i)=t_i\otimes 1 + 1
\otimes t_i + t_i\otimes t_i$ ($1\le i\le s$). Since $H$ is unipotent,
the action of $H$ on $u$ and $v$ can be upper
triangularized. 
We choose $v$ to be the invariant element. 
Furthermore, there are enough automorphisms of
$\bbG_{a(r)}$ so that all faithful actions are equivalent. 
Thus there are constants
$\mu_i\in k$ such that the map $\tau \colon kH \otimes A \to A$
describing the action is given by
\begin{align*} 
\tau(s_1\otimes u) &= v, \\
\tau(s_i\otimes u) &= 0 & (2\le i \le r), \\
\tau(s_i\otimes v) &= 0 & (1\le i \le r), \\
\tau(t_i\otimes u) &= \mu_i v & (1\le i \le s), \\ 
\tau(t_i\otimes v) &=0 & (1\le i \le s).
\end{align*}

By abuse of notation, we write $u$ for $u\otimes 1$, $v$ for $v\otimes
1$, $s_i$ for $1 \otimes s_i$ and $t_i$ for $1\otimes t_i$ in $A\rtimes kH$.
These elements satisfy the following relations:
\begin{align*} 
u^2&=v^2=uv+vu=0, \\ 
s_1u&=us_1+v, \\
s_iu&=us_i+s_1^{p-1}\!\!\!\dots s_{i-1}^{p-1}v & (2\le i \le r), \\
s_iv&=vs_i & (1\le i \le r), \\
t_iv&=vt_i & (1\le i \le s), \\
t_iu&=ut_i+\mu_i v (1+t_i) & (1\le i\le s).
\end{align*}

\section{Steenrod operations}\label{se:Steenrod}

We shall need to use Steenrod operations in the cohomology of finite
supergroup schemes. The discussion of these in the literature is
almost, but not completely adequate for our purposes, 
and so we give a brief discussion here.

If $A$ is a $\bbZ$-graded cocommutative Hopf algebra over $\bbF_p$, the discussion in 
Section 11 of May~\cite{May:1970a} does the job. For $p$ odd, there
are natural operations 
\begin{align*}
\mcP^i\colon H^{s,t}(A,k) &\to H^{s+(2i-t)(p-1),pt}(A,k) \\
\beta\mcP^i \colon H^{s,t}(A,k) &\to H^{s+1+(2i-t)(p-1),pt}(A,k)
\end{align*}
satisfying, among others, the following properties:

\begin{enumerate}
\item[\rm (i)] $\mcP^i=0$ if either $2i<t$ or $2i>s+t$\newline
$\beta\mcP^i=0$ if either $2i<t$ or $2i\ge s+t$
\item[\rm (ii)] $\mcP^i(x)=x^p$ if $2i=s+t$
\item[\rm (iii)] $\mcP^j(xy) = \sum_{i}\mcP^i(x)\mcP^{j-i}(y)$\newline
$\beta\mcP^j(xy)=\sum_{i}(\beta\mcP^i(x)\mcP^{j-i}(y)+\mcP^i(x)\beta\mcP^{j-i}(y))$
\item[\rm (iv)] The $\mcP^i$ and $\beta\mcP^i$ satisfy the Adem relations.
\item[\rm (v)] $\mcP^i$ is $\bbF_p$-linear; that is, $\mcP^i(u+v)
  =\mcP^i(u)+\mcP^i(v)$, and $\mcP^i(\lambda u) = \lambda \mcP^i(u)$
  for $\lambda \in \bbF_p$ and $u,v\in H^{*,*}(A,k)$.
\end{enumerate}

For us, there are two problems with this. The first is that we want to
work over a more general field $k$ of characteristic $p$, not just
$\bbF_p$. As remarked by
Wilkerson~\cite{Wilkerson:1981a} (bottom of page~140), the only
difference is that the operations are no longer $k$-linear. Rather,
they are semilinear, so that (v) should be replaced by\smallskip

(v) $\mcP^i$ is $k$-semilinear; that is, $\mcP^i(u+v)
  =\mcP^i(u)+\mcP^i(v)$, and $\mcP^i(\lambda u) = \lambda^p \mcP^i(u)$
  for $\lambda \in k$ and $u,v\in H^{*,*}(A,k)$.\smallskip

The other problem is that if we wish to apply this to a $\bbZ/2$-graded
object, then the way the indices works involves subtracting an element
of $\bbZ/2$ from an element of $\bbZ$ and expecting an answer in
$\bbZ$. This clearly doesn't work, so we need to do some re-indexing
to take care of this problem. The origin of the problem is that May
has chosen to base the indexing of the operations on total degree rather than
internal degree. The rationale for doing this is that it avoids the
introduction of half-integer indexed operations, but the disadvantage
is that it only works for $\bbZ$-graded objects, and not for example
for $\bbZ/2$-graded objects.

In order to reindex using internal degree rather than total degree,
we rename May's $\mcP^i$ as our $\cP^{i-t/2}$.
Then we have
\begin{align*} 
\cP^i\colon H^{s,t}(A,k) &\to H^{s+2i(p-1),pt}(A,k) \\
\beta\cP^i\colon H^{s,t}(A,k) & \to H^{s+1+2i(p-1),pt}(A,k). 
\end{align*}
Here, $i\in \bbZ$ if $t$ is even and $i\in \bbZ+\half$ if $t$ is odd.
Note that since $p$ is odd,
$pt$ is equivalent to $t$ mod $2$, so the operations 
preserve internal degree as elements of $\bbZ/2$.

These operations are 
called $P^i$ in Theorem A1.5.2 of Appendix 1 in 
Ravenel \cite{Ravenel:1986a}.
They are called $\tilde \mcP^i$ in the discussion
following Theorem 11.8 of May~\cite{May:1970a}, but he 
ignores the operations indexed by $\bbZ+\half$.

The upshot of this reindexing is that at the expense 
of introducing half-integer indices for the Steenrod
operations, we have made the notation work for $\bbZ/2$-graded
objects. Properties (i) and (ii) have been reindexed, so that 
(i)--(v) are now as follows:
\begin{enumerate}
\item[\rm (i)] $\cP^i=0$ if either $i<0$ or $i>s/2$\newline
$\beta\cP^i=0$ if either $i<0$ or $i\ge s/2$
\item[\rm (ii)] $\cP^i(x)=x^p$ if $i=s/2$
\item[\rm (iii)] $\cP^j(xy) = \sum_{i}\cP^i(x)\cP^{j-i}(y)$\newline
$\beta\cP^j(xy)=\sum_{i}(\beta\cP^i(x)\cP^{j-i}(y)+\cP^i(x)\beta\cP^{j-i}(y))$
\item[\rm (iv)] The $\cP^i$ and $\beta\cP^i$ satisfy the Adem relations.
\item[\rm (v)] $\cP^i$ is $k$-semilinear; that is, $\cP^i(u+v)
  =\cP^i(u)+\cP^i(v)$, and $\cP^i(\lambda u) = \lambda^p \cP^i(u)$
  for $\lambda \in k$ and $u,v\in H^{*,*}(A,k)$.
\end{enumerate}

\begin{proposition}\label{pr:Steenrod-on-Ga^-}
The ring $H^{*,*}(\bbG_a^-,k)$ is a polynomial ring $k[\zeta]$ 
on a single generator $\zeta$ in degree $(1,1)$.
The action of the Steenrod operations on $H^{*,*}(\bbG_a^-,k)$ is given by
$\cP^\shalf(\zeta)=\zeta^p$,
$\beta\cP^\shalf(\zeta)=0$.
\end{proposition}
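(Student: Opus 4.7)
The approach is to compute the cohomology ring directly from a minimal projective resolution and then read the Steenrod operations off the axioms of Section~\ref{se:Steenrod}.

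The group algebra $k\bbG_a^-$ is the exterior algebra $k[v]/(v^2)$ with $v$ primitive of odd internal degree. As a local, self-injective algebra with one-dimensional maximal ideal $(v)$, its minimal projective resolution of the trivial module $k$ is the periodic complex
\[ \cdots \xrightarrow{\,v\,} k\bbG_a^- \xrightarrow{\,v\,} k\bbG_a^- \xrightarrow{\,v\,} k\bbG_a^- \to k \to 0, \]
where the generator of the $n$th copy of $k\bbG_a^-$ is placed in internal degree $n$ so that every differential preserves the bigrading. Since $v$ acts as zero on $k$, applying $\Hom_{k\bbG_a^-}(-,k)$ produces the zero differential in every cohomological degree, so $H^{n,*}(\bbG_a^-,k)$ is one-dimensional for each $n\ge 0$ and concentrated in internal degree $n \bmod 2$.

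For the multiplicative structure, the class $\zeta\in H^{1,1}$ dual to $v$ corresponds, under the Yoneda identification, to the short exact sequence $0\to k\to k\bbG_a^-\to k\to 0$, in which the left-hand copy of $k$ is the first syzygy $\Omega k = kv$, canonically isomorphic to the trivial module shifted in internal degree by $1$. Since $k\bbG_a^-$ is injective as a module over itself, this represents an isomorphism $\Omega k\xrightarrow{\,\sim\,} k$ in the stable module category, so cup product with $\zeta$ acts as an isomorphism on $H^{*,*}(\bbG_a^-,k)$. Consequently $\zeta^n$ generates the one-dimensional space $H^{n,*}(\bbG_a^-,k)$ for every $n$, and the canonical graded ring map $k[\zeta]\to H^{*,*}(\bbG_a^-,k)$ is an isomorphism.

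The Steenrod computations are then immediate from the axioms of Section~\ref{se:Steenrod} applied with $s=t=1$ and $i=\shalf$: property (ii) gives $\cP^\shalf(\zeta)=\zeta^p$, and property (i) gives $\beta\cP^\shalf(\zeta)=0$ because $\shalf\ge s/2 = \shalf$. The only delicate point is the internal-degree bookkeeping that forces $\zeta$ into odd rather than even internal degree; once the shift in the minimal resolution is pinned down correctly, the rest of the argument is formal.
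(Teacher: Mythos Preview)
Your proof is correct and follows essentially the same logic as the paper's: compute the cohomology of the exterior algebra as $k[\zeta]$ and then read off $\cP^\shalf(\zeta)=\zeta^p$ and $\beta\cP^\shalf(\zeta)=0$ from axioms (ii) and (i) of Section~\ref{se:Steenrod}. The only difference is cosmetic: the paper first lifts $\bbG_a^-$ to the $\bbZ$-graded exterior Hopf algebra on a degree-one primitive (where May's operations are originally constructed), applies May's Theorem~11.8 there, and then reduces the grading modulo two, whereas you invoke the reindexed $\bbZ/2$-graded axioms directly---which is legitimate since Section~\ref{se:Steenrod} has already set them up.
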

\begin{proof}
We prove this by reducing the grading modulo two on a $\bbZ$-graded
cocommutative Hopf algebra.
The cohomology of a 
$\bbZ$-graded Hopf algebra on a primitive exterior generator in degree one
is $k[\zeta]$ with $\zeta$ in degree $(1,1)$. 
If we compute the action of the Steenrod operations on this, 
the action of $\cP^\shalf=\mcP^1$ and $\beta\cP^\shalf=\beta\mcP^1$ 
follows from Theorem~11.8~(ii) of \cite{May:1970a}, and is given as in
the Proposition.
Now reduce the grading modulo two.
\end{proof}

We have
\begin{multline*} 
H^{*,*}(\bbG_a^-\times\bbG_{a(r)} \times (\bbZ/p)^s,k) = \\
k[\zeta]\otimes k[x_1,\dots,x_r] \otimes
\Lambda(\lambda_1,\dots,\lambda_r) \otimes
k[z_1,\dots,z_s] \otimes \Lambda(y_1,\dots,y_s). 
\end{multline*}
The degrees and action of the Steenrod operations are as follows.
\begin{center}
\renewcommand{\arraystretch}{1.4}
\begin{tabular}{|c|c|c|c|c|c|}
\hline
& \small degree & $\cP^{0^{\phantom{0}}}$ & $\beta\cP^0$ &
$\cP^\shalf$ & $\cP^1$   \\ 
\hline
$\zeta$ & $(1,1)$ & & & $\zeta^p$ &  \\
$\lambda_i$ & $(1,0)$ & $\lambda_{i+1}$ & $-x_i$ & & $0$  \\
$y_i$ & $(1,0)$ & $y_i$ & $z_i$ &  & $0$  \\
$x_i$ & $(2,0)$ & $x_{i+1}$ & $0$ & & $x_i^p$  \\
$z_i$ & $(2,0)$ & $z_i$ & $0$ & & $z_i^p$  \\ 
\hline
\end{tabular}\smallskip
\end{center}
Here, $\lambda_{i+1}$ and $x_{i+1}$ are taken to be zero if $i=r$.

%%%%%%%%%%%%%%%%%%%%%%%%%%%%%%%%%%%%%%%%%%%%%%
%%%%%%%%%%%%%%Local cohomology%%%%%%%%%%%%%%%%%%%%%%%
\section{The local cohomology spectral sequence}
\label{se:local}

In this section we sketch the construction of a generalization 
of the local cohomology spectral sequence to finite supergroup 
schemes.  The spectral sequence was constructed by Benson and 
Carlson~\cite{Benson/Carlson:1994a} for finite groups; Greenlees 
gave a more robust construction in \cite{Greenlees:1995a}. The supergroup 
version comes with a twist which we now describe. 

Recall from Section~I.8 of Jantzen~\cite{Jantzen:1987a} that
there is a one dimensional representation $\delta_G$ of a finite group
scheme $G$, called the \emph{modular function}, and that by
Proposition~I.8.13 of \cite{Jantzen:1987a}, if $Q$ is a projective
$kG$-module then $\Soc(Q) \cong Q/\Rad(Q) \otimes \delta_G$.
This generalizes to finite supergroup schemes, without change in the
argument, the only extra feature being that $\delta_G$ comes with a
parity $\ep_G\in\bbZ/2$. So for example $\delta_{\bbG_a^-}$ is the trivial module, but
in odd internal degree, so we have $\ep_{\bbG_a^-}=1\in\bbZ/2$. 

The role of the modular function is that it appears in Tate duality which we 
deduce from the general statement of Auslander-Reiten duality. The latter gives an isomorphism 
\begin{equation} 
\label{eq:AR}
\Hom_k(\underline{\Hom}_G(M,N)) \cong \underline{\Hom}_G(N,\Omega \nu M)
\end{equation} 
(see 
~\cite[Proposition I.3.4]{Auslander:1978a}, also \cite[Corollary
p. 269]{Krause:2003a}). 
Here, 
\[\nu: \StMod G \to \StMod G\] 
is the Nakayama functor. For a 
finite supergroup schemes it is given by the formula 
\[ \nu(-) =  - \otimes_k \delta_G\] 
(\cite[Section 4]{Benson/Iyengar/Krause/Pevtsova:bikp4}).

Applying \eqref{eq:AR} to $N, \Omega^{n+1}M$, we get Tate duality for finite supergroup schemes:
\begin{align*} 
\widehat{\Ext}^{-n-1,*}(N,M) & \cong  \underline{\Hom} (N, \Omega^{n+1}M) \\
& \cong \Hom_k(\underline{\Hom}(\Omega^{n+1}M, \Omega \nu N),k) \\
& \cong  \Hom_k(\underline{\Hom}(\Omega^{n}M, N \otimes \delta_G),k) \\
& \cong  \Hom_k(\widehat{\Ext}^{n, * + \epsilon_G}(M, N \otimes \delta_G),k).
\end{align*}

In particular, for $M=N=k$, $n \geq 0$, this becomes 
\begin{equation} 
\label{eq:Tate} 
H_{n,j+\ep_G}(G,\delta_G) \cong \widehat H^{-n,j+\ep_G}(G,\delta_G) \cong \Hom_k(\widehat H^{n-1,* + \epsilon_G}(G, \delta_G),k). 
\end{equation}

The local cohomology spectral sequence is triply graded. The
gradings are firstly local cohomological, secondly group
cohomological, and thirdly internal parity. Repeating the constructions in 
\cite{Greenlees:1995a} or \cite{Benson:2001a} (explicitly, Section 3 in \cite{Benson:2001a}) verbatim up to the point where local duality is used to identify negative Tate cohomology and homology gives the following.

\begin{theorem}\label{th:local-coh}
	Let $G$ be a finite supergroup scheme. Then there is a spectral sequence
		\[ E_2^{s,t,*} = H^{s,t}_\mfm H^{*,*}(G,k) \Rightarrow
	\widehat H^{s+t-1<0,*}(G,k)\]
	converging to the negative part of Tate cohomology. 
\end{theorem}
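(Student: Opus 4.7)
The plan is to adapt the Greenlees--Benson construction of the local cohomology spectral sequence verbatim to the $\bbZ/2$-graded (supergroup) setting, following \cite{Greenlees:1995a} and Section~3 of \cite{Benson:2001a}, while tracking the internal parity throughout. First, since $H^{*,*}(G,k)$ is a Noetherian graded-commutative $k$-algebra for a finite supergroup scheme $G$, one chooses a homogeneous system of parameters $\zeta_1,\dots,\zeta_n \in H^{*,*}(G,k)$, with $\zeta_i$ of bidegree $(n_i,j_i)$. Each $\zeta_i$ is represented in $\StMod(kG)$ by a morphism $\Omega^{n_i,j_i} k \to k$.

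Second, form the stable Koszul complex. For each $i$, set $k_{\zeta_i} := \hocolim\bigl(k \xra{\zeta_i} \Omega^{-n_i,-j_i} k \xra{\zeta_i} \Omega^{-2n_i,-2j_i} k \to \cdots\bigr)$, and let $\check C^\bullet_i$ be the two-term complex $(k \to k_{\zeta_i})$. Tensor these over $k$ to obtain a complex $\check C^\bullet = \check C^\bullet_1 \otimes_k \cdots \otimes_k \check C^\bullet_n$ of objects in $\StMod(kG)$. The canonical filtration by the number of localized tensor factors produces a spectral sequence converging to $H^{*,*}(G,\check C^\bullet)$. Since the $\zeta_i$ generate an $\mfm$-primary ideal, the $E_1$ page is the stable \v{C}ech complex, whose cohomology is local cohomology by standard commutative algebra; this identifies $E_2^{s,t,*} = H^{s,t,*}_\mfm H^{*,*}(G,k)$. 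The internal parity is preserved at every stage because $\Omega$, $\otimes_k$ and multiplication by each $\zeta_i$ all respect the bigrading, so the third index is carried along automatically.

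Third, identify the abutment with negative Tate cohomology. The total complex $\check C^\bullet$ sits in an exact triangle in $\StMod(kG)$ with $k$ and with the fully localized object $\bigotimes_i k_{\zeta_i}$; the long exact sequence in cohomology, together with the observation that the fully localized object sees only the part of Tate cohomology obtained by inverting the entire system of parameters, identifies $H^{*,*}(G,\check C^\bullet)$ with the negative part of Tate cohomology shifted by one, giving the stated convergence to $\widehat H^{s+t-1,*}(G,k)$ for $s+t-1<0$. The main obstacle, and the only place the supergroup case demands extra care, is tracking the internal parity through the homotopy colimits and through the degree shift by one; this goes through unchanged, since every morphism in the construction respects the internal $\bbZ/2$-grading. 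The subsequent refinement incorporating the modular function $\delta_G$ via Tate duality~\eqref{eq:Tate} is then a separate corollary, not required for the bare convergence claim here.
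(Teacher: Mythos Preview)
Your proposal is correct and follows exactly the approach the paper intends: the paper's own ``proof'' consists of the single sentence that one repeats the constructions of Greenlees~\cite{Greenlees:1995a} and Benson~\cite{Benson:2001a} verbatim, and you have simply spelled out what those constructions are, together with the observation that the internal $\bbZ/2$-grading is carried along passively. The only point worth flagging is that you assume Noetherianity of $H^{*,*}(G,k)$ without comment; this is indeed known for finite supergroup schemes (Drupieski), and the paper takes it for granted as well.
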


Now applying the duality isomorphism \eqref{eq:Tate}, we produce the local cohomology spectral sequence converging to homology.

\begin{corollary}
	\label{cor:local-coh} Let $G$ be a finite supergroup scheme. Then there is a local
	cohomology spectral sequence
		\[ E_2^{s,t,j} = H^{s,t,j}_\mfm H^{*,*}(G,k) \Rightarrow
	H_{-s-t,j+\ep_G}(G, \delta_G).\]
\end{corollary}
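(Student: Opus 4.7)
The plan is to obtain Corollary \ref{cor:local-coh} directly from Theorem \ref{th:local-coh} by applying the Tate duality isomorphism \eqref{eq:Tate} to the abutment of the spectral sequence. The $E_2$-page of the two spectral sequences is already identical, so the content of the corollary lies entirely in reinterpreting the abutment as homology with coefficients twisted by the modular function $\delta_G$.

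Concretely, I would start with the abutment $\widehat H^{s+t-1,*}(G,k)$ of Theorem \ref{th:local-coh}, restricted to the range $s+t-1 < 0$ where Tate cohomology agrees with the tail of the spectral sequence. I would then specialize \eqref{eq:Tate} to $M=N=k$ to identify negative Tate cohomology with coefficients in $k$ as homology with coefficients in $\delta_G$, shifted in internal parity by $\ep_G$; both shifts arise from the Nakayama functor $\nu(-)=-\otimes_k\delta_G$ featured in the Auslander--Reiten duality set up just above the statement of the corollary. Setting $n=-s-t$, so that $-n-1=s+t-1$, converts cohomological degree $s+t-1$ into homological degree $-s-t$ and yields the stated abutment $H_{-s-t,\,j+\ep_G}(G,\delta_G)$.

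The proof is essentially index bookkeeping: the spectral sequence itself already exists by Theorem \ref{th:local-coh}, and the relevant duality is \eqref{eq:Tate}, so what remains is only to align the two gradings. The one step requiring care is ensuring that both the internal parity shift by $\ep_G$ and the twist of coefficients by $\delta_G$ appear correctly in the abutment; since they originate from the same Nakayama twist, I would carry out the index translation explicitly to confirm that they are indeed produced simultaneously by the single application of \eqref{eq:Tate}. No further input beyond what is already in the excerpt is required.
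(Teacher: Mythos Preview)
Your proposal is correct and matches the paper's approach exactly: the paper's own proof is the single sentence ``Now applying the duality isomorphism \eqref{eq:Tate}, we produce the local cohomology spectral sequence converging to homology,'' which is precisely what you describe. Your additional care in tracking the index translation $n=-s-t$ and the simultaneous appearance of the parity shift $\ep_G$ and the coefficient twist $\delta_G$ is well placed and only makes the argument more explicit than the paper's terse statement.
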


\begin{definition}
	A finite supergroup scheme is called {\it unimodular} if the modular function $\delta_G$ is the trivial module in degree $\ep_G$.
\end{definition}

\begin{remark} Finite unipotent supergroup schemes are unimodular since the only one-dimensional representations are given by the trivial module $k$ in either even or odd degree. 
	\end{remark}

Let $r$ be the Krull dimension of $H^{*,*}(G,k)$. Then $H^{i,*}_\mfm H^{*,*}(G,k) =0$ for $i>r$. Hence, there is an edge homomorphism of the local cohomology spectral sequence:
\begin{equation} 
\label{eq:edge} 
H^{r,t,j}_\mfm H^{*,*}(G,k) \to 
H_{-r-t,j+\ep_G}(G, \delta_G). 
\end{equation}

We wish to use the following consequences of Corollary~\ref{cor:local-coh}. The statement of the first  Corollary~\ref{co:Gorenstein} is a direct analogue of 
\cite[Corollary 2.3]{Greenlees:1995a} (see also \cite{Benson/Carlson:1994a}). 

\begin{corollary}\label{co:Gorenstein}
Let $G$ be a unimodular finite supergroup scheme. 
If $H^{*,*}(G,k)$ is Cohen--Macaulay,
then it is Gorenstein, with shift $(0,\ep_G)$. 

\end{corollary}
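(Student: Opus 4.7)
My plan is to use the Cohen--Macaulay hypothesis to degenerate the local cohomology spectral sequence of Corollary~\ref{cor:local-coh}, and then apply Tate duality \eqref{eq:Tate} to identify the resulting top local cohomology with a shift of the graded Matlis dual of $R := H^{*,*}(G,k)$. This will be the direct supergroup analogue of Greenlees's proof~\cite{Greenlees:1995a} (see also Benson--Carlson \cite{Benson/Carlson:1994a}), with the modular function tracked through.

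The first step is to exploit Cohen--Macaulay-ness to collapse the spectral sequence. Since $R$ has Krull dimension $r$, the local cohomology modules $H^{i,*,*}_\mfm R$ vanish for all $i\ne r$. The $E_2$-page of the spectral sequence from Corollary~\ref{cor:local-coh} thus concentrates in the single column $s=r$, so the sequence degenerates, and reading off the abutment gives an isomorphism
\[
H^{r,t,j}_\mfm R \;\cong\; H_{-r-t,\,j+\ep_G}(G,\delta_G)
\]
for every $t\in\bbZ$ and $j\in\bbZ/2$.

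Next I would invoke the unimodular hypothesis to simplify the right hand side. Since $\delta_G$ is the trivial $kG$-module placed in internal parity $\ep_G$, tensoring with it only shifts internal parity by $\ep_G$, which cancels against the $\ep_G$ already appearing in the internal index. Then I would apply the Tate duality isomorphism \eqref{eq:Tate}, which in the unimodular case similarly reduces to $H_n(G,k)\cong \Hom_k(\widehat H^{n-1,*}(G,k),k)$. Composing the two identifications expresses $H^{r,*,*}_\mfm R$ as the bigraded Matlis dual of $R$, up to the overall shift $(0,\ep_G)$ predicted by the statement; by definition, this is the Gorenstein condition with that shift.

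The main obstacle, in my view, is not any hard algebra but rather the careful bookkeeping of the two independent occurrences of $\ep_G$ --- one arising from the modular function in the abutment of the spectral sequence, the other from Tate duality --- so as to conclude that the Gorenstein shift is exactly $(0,\ep_G)$ and not, say, $(0,0)$ or $(0,2\ep_G)$. Apart from this parity accounting, the proof is a transcription of the classical argument from the finite group setting to the supergroup setting.
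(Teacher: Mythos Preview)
Your approach is essentially the paper's: collapse the local cohomology spectral sequence of Corollary~\ref{cor:local-coh} using the Cohen--Macaulay hypothesis, use unimodularity to replace $\delta_G$ by $k$ (in parity $\ep_G$), and then dualise to identify $H^r_\mfm R$ with the injective hull of $k$, hence Gorenstein.

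One small wrinkle worth noting: your plan to invoke Tate duality~\eqref{eq:Tate} \emph{after} collapsing the spectral sequence is redundant, because \eqref{eq:Tate} is precisely what was used to pass from Theorem~\ref{th:local-coh} to Corollary~\ref{cor:local-coh} in the first place. Applying it again would unwind that step. What you actually need at the end is the elementary fact that, over a field, group homology is the linear dual of group cohomology; this is how the paper phrases it (``This is linear dual to cohomology $H^{*,*+\ep_G}(G,k)$''). This redundancy is exactly the source of your worry about ``two independent occurrences of $\ep_G$'': there is really only one, coming from the $\delta_G$ in the abutment, and it survives as the internal shift. Once you drop the second invocation of \eqref{eq:Tate}, the bookkeeping becomes straightforward and yields the shift $(0,\ep_G)$ directly.
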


\begin{proof}
	Since $G$ is unimodular, $\delta_G$ is a trivial module. The Cohen--Macaulay assumption on $H^{*,*}(G,k)$ implies that the edge map of \eqref{eq:edge} is an isomorphism which therefore identifies the top local cohomology $H^r_\mfm(H^{*,*}(G,k))$ with homology $H_{*,*+\epsilon_G}(G,k)$. This is linear dual to cohomology $H^{*, *+\epsilon_G}(G,k)$, and, hence, the top local cohomology module is the injective hull of the trivial $H^{*,*}(G,k)$-module $k$ in the internal degree $\epsilon_G$.   Hence, $H^{*,*}(G,k)$ is Gorenstein (see \cite[Theorem 11.26]{Iyengar:2008a} or \cite[Theorem 1.3.4]{Goto/Watanabe:1978a} where the graded case is made explicit).
	%\footnote{Julia: This is not a graded reference, but maybe still adequate?}
\end{proof}

Recall that $\{ \zeta_1,\ldots,\zeta_r \}$ is a {\it system of parameters} of a (graded) commutative $k$-algebra $A$ if $k[\zeta_1,\ldots,\zeta_r] \subset A$ is a Noether normalization of $A$, that  is, $A$
is a finite module over $k[\zeta_1,\ldots,\zeta_r]$.  The last corollary is a general property of graded Gorenstein $k$-algebras.   

\begin{corollary}\label{co:Poincare}
	Let $G$ be a unimodular finite supergroup scheme. 
	Assume $H^{*,*}(G,k)$ is Cohen--Macaulay, and 
		let $\zeta_1,\dots,\zeta_r$ be a regular homogeneous sequence of 
	parameters in $H^{*,*}(G,k)$.  Then
	the quotient $H^{*,*}(G,k)/(\zeta_1,\dots,\zeta_r)$ is a finite 
	Poincar\'e duality algebra with dualizing degree $(-r,\ep_G)+\sum_{i=1}^r |\zeta_i|$.
\end{corollary}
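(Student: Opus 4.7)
The plan is to deduce this from the Gorenstein property of $A=H^{*,*}(G,k)$ established in Corollary~\ref{co:Gorenstein}, combined with the standard principle that a graded Gorenstein $k$-algebra modulo a homogeneous regular sequence of parameters is an Artinian Gorenstein ring, equivalently a finite Poincar\'e duality algebra.

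Setting $B = A/(\zeta_1,\dots,\zeta_r)$, the Cohen--Macaulay hypothesis ensures that $\zeta_1,\dots,\zeta_r$ is in fact a regular sequence, so the Koszul complex $K^\bullet(\zeta_1,\dots,\zeta_r;A)$ is a finite free resolution of $B$ over $A$. The self-duality of $K^\bullet$ under $\Hom_A(-,A)$---its dual is the same Koszul complex with homological indexing reversed and an overall bidegree shift of $\sum_{i=1}^r|\zeta_i|$---yields $\Ext^r_A(B,A)\cong B$ shifted by $\sum_{i=1}^r|\zeta_i|$, and $\Ext^i_A(B,A)=0$ for $i\ne r$. Together with the Gorenstein property of $A$ this forces $B$ to be Artinian Gorenstein. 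Concretely, writing $\omega_A\cong A(\sigma)$ for the canonical module of $A$, we get $\omega_B=\Ext^r_A(B,\omega_A)\cong B(\sigma+\sum_{i=1}^r|\zeta_i|)$, and since $\omega_B$ is the graded Matlis dual $B^\vee$, this exhibits $B$ as a Poincar\'e duality algebra whose top (socle) degree equals $\sigma+\sum_{i=1}^r|\zeta_i|$.

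It remains to identify $\sigma$. The proof of Corollary~\ref{co:Gorenstein} shows that the edge map of the local cohomology spectral sequence yields an isomorphism $H^{r,t,j}_\mfm(A)\cong H_{-r-t,\,j+\ep_G}(G,k)$, and under this identification the socle $H_0(G,k)=k$ in bidegree $(0,0)$ corresponds to the socle of $H^r_\mfm(A)$ in bidegree $(-r,\ep_G)$. This gives $\sigma=(-r,\ep_G)$, and hence the claimed dualizing degree $(-r,\ep_G)+\sum_{i=1}^r|\zeta_i|$. The main obstacle is not conceptual but rather the careful bookkeeping of bidegree shifts---in particular the cohomological $-r$ contribution arising from the edge-map indexing and the internal parity $\ep_G$ inherited from the comparison between homology and cohomology.
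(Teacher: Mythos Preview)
Your argument is correct and matches the paper's (implicit) approach: the paper gives no proof of Corollary~\ref{co:Poincare}, remarking only that it ``is a general property of graded Gorenstein $k$-algebras'' once Corollary~\ref{co:Gorenstein} is in hand. You have supplied precisely those standard details---Koszul self-duality to pass the Gorenstein property to the Artinian quotient, and the edge-map bookkeeping to locate the socle of $H^r_\mfm(A)$ in bidegree $(-r,\ep_G)$---and your shift computation agrees with the paper's stated dualizing degree and with its application in the proof of Theorem~\ref{th:Ga1}.
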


%%%%%%%%%%%%%%%%%%%%%%%$(\bbG_a^- \times \bbG_a^-)\rtimes\bbG_{a(1)}$%%%%%%%%%%%%%%%%%%
%%%%%%%%%%%%%%%%%%%%%%%%%%%%%%%%%%%%%%%%%%%%%%%%%%%%%%%%%%%%%%%%%
\section{The case $(\bbG_a^- \times \bbG_a^-)\rtimes \bbG_{a(1)}$}%
\label{se:Ga(1)}

Let $G=(\bbG_a^- \times \bbG_a^-)\rtimes \bbG_{a(1)}$. This is a finite supergroup scheme of height one and, 
hence, $kG$ is isomorphic to the restricted universal enveloping algebra of the three dimensional Lie superalgebra
$\mathfrak g$ (see, for example, \cite[Lemma 4.4.2]{Drupieski:2013a}). 
The Lie superalgebra $\mathfrak g$ has a basis consisting of odd elements $u$ and $v$, and an even element $t$. 
Specializing calculations in Section~\ref{se:semidirect} to this case, we get that the Lie algebra generators satisfy 
the following relations 
\[
[u,v]=0,\quad
 [t,v]=0, \quad
 [t,u]=v
\]
where $[\ \,,\ ]$ is the supercommutator in $\mathfrak g$. Thus $kG$ has the following 
presentation: 
\begin{equation}
\label{eq:Ga1}
kG = \frac{k[u,v, t]}{(u^2, v^2, uv+vu, t^p, tv-vt, tu-ut-v)}. 
\end{equation} 
%generated by the elements $u$, $v$ and $t$ with relations
%\[ u^2=v^2=uv+vu=t^p=0,\quad tv=vt, \quad tu-ut=v. \]

\begin{theorem}\label{th:Ga1}
Let $G= (\bbG_a^-\times\bbG_a^-)\rtimes \bbG_{a(1)}$, with $\bbG_{a(1)}$ acting non-trivially.
Then $H^{*,*}(G,k)$ is generated by 
\[ \zeta\in H^{1,1}(G,k),\ x\in H^{2,0}(G,k), \ \kappa\in
  H^{p,1}(G,k),\ \lambda_i\in H^{i,1+i}(G,k)\ (1\le i \le p-2) \] 
with the relations
\begin{flalign*} 
\lambda_i\zeta=0\ (1\le i\le p-2), \\ 
x\zeta^{p-1}=0, \\
\lambda_i\lambda_j = 0 \text{ for } i+j \not = p, \\
\lambda_i\lambda_{p-i} = \alpha_i x\zeta^{p-2} \text{ where } \alpha_i \not = 0
\end{flalign*}
%and $\lambda_i\lambda_{p-i}$ is either zero or a multiple of $x\zeta^{p-2}$.  

Then the Poincar\'e series is given by
\[ \sum_n t^n \dim_k H^{n,*}(G,k) = 1/(1-t)^2. \]
\end{theorem}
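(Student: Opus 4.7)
My plan is to combine the Lyndon--Hochschild--Serre spectral sequence with the local cohomology spectral sequence of Section~\ref{se:local}. For the LHS I would use the normal sub-super group scheme $\bbG_a^-\times\bbG_a^- \trianglelefteq G$:
\[
E_2^{p,q} = H^p\bigl(\bbG_{a(1)},\, H^q(\bbG_a^-\times\bbG_a^-,k)\bigr) \Longrightarrow H^{p+q,*}(G,k).
\]
By Proposition~\ref{pr:Steenrod-on-Ga^-} and K\"unneth, $H^{*,*}(\bbG_a^-\times\bbG_a^-,k)=k[\zeta_1,\zeta_2]$ with each $\zeta_i$ of bidegree $(1,1)$, and the $q$-th cohomological piece is the symmetric power $S^q(V^\#)$ of the two-dimensional $\bbG_{a(1)}$-representation $V^\#=k\zeta_1\oplus k\zeta_2$. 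The conjugation action deduced from Section~\ref{se:semidirect} gives $t\cdot\zeta_1=0$ and $t\cdot\zeta_2=-\zeta_1$, so $V^\#$ is faithful and Theorem~\ref{th:sympowers} (with $p^{r+s}=p$) produces
\[
S^q(V^\#) \cong (k\bbG_{a(1)})^{\lfloor q/p\rfloor} \oplus L_{(q\bmod p)+1},
\]
where $L_n$ is the unique uniserial $k\bbG_{a(1)}$-module of dimension $n$.

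This reduces each column of $E_2$ to the cohomology $H^*(\bbG_{a(1)},L_n)$, which I would compute inductively from the short exact sequences $0\to L_1\to L_n\to L_{n-1}\to 0$: for $1\le n\le p-1$ one has $\dim H^i(\bbG_{a(1)},L_n)=1$ in every degree $i\ge 0$, while the projective summand $L_p=k\bbG_{a(1)}$ contributes only to $H^0$. The generators of the theorem then appear directly on $E_2$: $\zeta=\zeta_1\in E_2^{0,1}$ (the invariant); the class $x\in E_2^{2,0}$ inflated from $\bbG_{a(1)}$; $\lambda_1\in E_2^{1,0}$ as the inflation of $\lambda$; the classes $\lambda_i\in E_2^{1,i-1}$ for $2\le i\le p-1$ generating the one-dimensional spaces $H^1(\bbG_{a(1)},L_i)$; and the exceptional class $\kappa$ represented by the invariant $\zeta_2^p\in E_2^{0,p}$, which is a cocycle because $t\cdot\zeta_2^p=-p\zeta_1\zeta_2^{p-1}=0$ in characteristic $p$. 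A term-by-term sum shows the Poincar\'e series of $E_2$ is already $1/(1-t)^2$, and internal parity together with sparseness of $E_2$ rule out every potential outgoing differential from these classes, so the spectral sequence collapses at $E_2$.

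With $E_\infty=E_2$, the relations $x\zeta^{p-1}=0$ (because $E_2^{2,p-1}=H^2(\bbG_{a(1)},L_p)=0$) and $\lambda_i\zeta=0$ (because $\lambda_i$ is the connecting class of $0\to L_i\to L_{i+1}\to k\to 0$, hence in the kernel of the map $H^1(L_i)\to H^1(L_{i+1})$ induced by multiplication by $\zeta_1$) drop out, and the remaining vanishings $\lambda_i\lambda_j=0$ for $i+j\ne p$ follow from an analogous analysis of the cup product $H^1(L_i)\otimes H^1(L_j)\to H^2(L_{i+j-1})$. Both $\lambda_i\lambda_{p-i}$ and $x\zeta^{p-2}$ lie in the one-dimensional space $E_2^{2,p-2}=H^2(\bbG_{a(1)},L_{p-1})$, so the real content of the theorem is the non-vanishing $\lambda_i\lambda_{p-i}\ne 0$; this is the main obstacle, since spectral-sequence multiplicativity alone places both products in the same filtration layer but cannot distinguish zero from a nonzero scalar multiple. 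To close it I would invoke Section~\ref{se:local}: $G$ is unipotent and hence unimodular, and Cohen--Macaulayness of $H^{*,*}(G,k)$ (of Krull dimension two, matching the growth $1/(1-t)^2$) can be extracted from a more careful analysis of the collapsed spectral sequence together with the production of a homogeneous regular sequence of parameters built from the generators $\zeta$, $x$ and $\kappa$. Corollary~\ref{co:Gorenstein} then upgrades Cohen--Macaulayness to Gorensteinness, and Corollary~\ref{co:Poincare} asserts that the quotient by such a regular sequence of parameters is a finite-dimensional Poincar\'e duality algebra. Since the unique top-degree class of that quotient is the image of $x\zeta^{p-2}$, perfectness of the pairing forces $\lambda_i$ and $\lambda_{p-i}$ to multiply to a nonzero multiple of it, producing the constants $\alpha_i\ne 0$ and yielding both the stated algebra structure and the Poincar\'e series $1/(1-t)^2$.
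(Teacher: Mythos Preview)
Your overall strategy differs from the paper's: you take the semidirect-product Lyndon--Hochschild--Serre spectral sequence as the main computational tool and aim to show it collapses at $E_2$, whereas the paper uses that spectral sequence only for the crude upper bound $\dim H^{n,*}(G,k)\le n+1$ (for $n\le p$) and carries out the real computation in the spectral sequence of the \emph{central} extension $1\to\bbG_a^-\to G\to\bbG_a^-\times\bbG_{a(1)}\to 1$, where the differential $d_2(\eta)=\lambda\zeta$ is explicit and a $\mu_p\times\mu_p$ weight grading on $kG$ pins down the higher differentials and resolves filtration ambiguities. The final Gorenstein/Poincar\'e-duality step for $\lambda_i\lambda_{p-i}\ne 0$ via Corollaries~\ref{co:Gorenstein} and~\ref{co:Poincare} is the same in both approaches.

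However, your collapse argument has a genuine gap. Internal parity forces $d_r=0$ for $r$ even, but does nothing for $r$ odd. Sparseness (the vanishing of $E_2^{i,q}$ for $i>0$ when $q\equiv p-1\pmod p$) kills $d_2(\kappa)$, since $E_2^{2,p-1}=0$, but it does \emph{not} touch $d_3(\kappa)\in E_3^{3,p-2}$, nor $d_3(\lambda_i)\in E_3^{4,i-3}$ for $i\ge 3$: these targets are one-dimensional and have the same internal parity as the source. So ``parity plus sparseness'' does not establish $E_2=E_\infty$. There is a related problem with your derivation of the relations: knowing $E_\infty^{2,p-1}=0$ only shows that $x\zeta^{p-1}$ lies in filtration $\ge 3$ of $H^{p+1,*}(G,k)$, not that it vanishes; the same applies to $\lambda_i\zeta$ and to $\lambda_i\lambda_j$ for $i+j\ne p$. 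The paper handles both of these issues with the $\mu_p\times\mu_p$ weight grading, which you do not invoke. Importing that grading into your spectral sequence would at least force the first possibly nonzero differential to be $d_p$ (since the first weight component is constant equal to $q$ on $E_2^{i,q}$), but even then a further argument---or a comparison with the central-extension spectral sequence, as the paper does---is needed to finish.
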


\begin{proof} 
We examine two spectral sequences, the first one given by the semidirect product: 
\[ H^i(\bbG_{a(1)},H^{j,*}(\bbG_a^- \times \bbG_a^-, k)) \Rightarrow H^{i+j,*}(G,k). \]
Let $V = ku \oplus kv$ be the two dimensional supervector 
space generated by $u,v$.
We have 
\[H^{*,*}(\bbG_a^-\times \bbG_a^-,k)\cong S^{*,*}(V^\#) = k[\zeta,\eta]
\] with $\zeta$
and $\eta$ in degree $(1,1)$, dual to the generators $u,v$. 
%Hence, the generator of $k\bbG_{a(1)}$ acts trivially on $\eta$ and sends $\zeta$ to $\eta$. 
For $0\le j\le p-1$,  
\[ H^{j,*}(\bbG_a^-\times \bbG_a^-,k) \cong S^j(V^\#) \] is an
indecomposable $k\bbG_{a(1)}$-module of length $j+1$. 
It is projective for $j=p-1$, and not otherwise. 
Hence, we have the following restrictions on dimensions 
of the $E^2$ term of the spectral sequence: 
\begin{align}
\label{eq:dim}
\dim H^i(\bbG_{a(1)},H^{j,*}(\bbG_a^-\times \bbG_a^-,k)) = 1 & \text{ for } 0\le j\le p-2,\\
\dim H^1(\bbG_{a(1)},H^{p-1,*}(\bbG_a^-\times \bbG_a^-,k)) = 0, & \\
\dim H^0(\bbG_{a(1)},H^{p,*}(\bbG_a^-\times \bbG_a^-,k)) = 2. &
\end{align}
To justify the last equality, we do a calculation:
\[H^0(\bbG_{a(1)},H^{p}(\bbG_a^-\times \bbG_a^-,k)) = H^0(\bbG_{a(1)}, S^p(V^\#))  = k\zeta^p \oplus k \eta^p
\]
where the last equality is a special case of Lemma~\ref{le:Gar}. 

We conclude that 
\begin{equation}
\label{eq:dim2} 
\dim H^n(G,k) \leq \sum\limits_{i+j=n} \dim H^i(\bbG_{a(1)},H^j(\bbG_a^-\times \bbG_a^-,k)) = n+1
\end{equation}
for $0 \leq n \leq p$.

We now examine the spectral sequence 
\[ H^{*,*}(\bbG_a^-\times
\bbG_{a(1)},H^{*,*}(\bbG_a^-,k)) \Rightarrow H^{*,*}(G,k) \]
corresponding to the central extension
\[ 1 \to \bbG_a^- \to G \to \bbG_a^- \times \bbG_{a(1)} \to 1\]
We write 
\begin{align*} 
H^{*,*}(\bbG_a^-\times\bbG_{a(1)},H^{*,*}(\bbG_a^-,k))
&= H^{*,*}(\bbG_a^-,k) \otimes H^{*,0}(\bbG_{a(1)},k) \otimes H^{*,*}(\bbG_a^-,k) \\
&= k[\zeta,x] \otimes \Lambda(\lambda)
\otimes k[\eta] 
\end{align*} 
with $\zeta$ the generator of the first $H^{*,*}(\bbG_a^-,k)$, 
$ x, \lambda$ the generators of $H^{*,0}(\bbG_{a(1)},k)$, and $\eta$ the 
generator of the second $H^{*,*}(\bbG_a^-,k)$. The degrees of 
the generators in the spectral sequence are as follows: 
\[ |\zeta|=(1,0,1),\quad |\lambda|=(1,0,0),\quad |x|=(2,0,0),\quad
|\eta|=(0,1,1).\] 
Here, the first two indices are the horizontal and
vertical directions in the spectral sequence, and the third is the
$\bbZ/2$-grading.

{\tiny
\begin{center}
\begin{picture}(160,120)(-10,-10)
\put(0,0){\line(1,0){140}}
\put(0,0){\line(0,1){100}}
\put(0,40){\line(1,0){140}}
\put(0,80){\line(1,0){140}}
\put(40,0){\line(0,1){100}}
\put(80,0){\line(0,1){100}}
\put(120,0){\line(0,1){100}}
\put(0,0){\line(1,1){40}}
\put(40,0){\line(1,1){40}}
\put(80,0){\line(1,1){40}}
\put(0,40){\line(1,1){40}}
\put(13,26){$1$}
\put(53,26){$\lambda$}
\put(66,10){$\zeta$}
\put(89,26){$x,\zeta^2$}
\put(104,10){$\lambda\zeta$}
\put(24,52){$\eta$}
\put(31,51){\vector(2,-1){73}}
\end{picture}
\end{center}
}

Let $\mu_p =  \bbG_{m(1)}$ be the finite group scheme of $p^{\rm th}$ roots of unity. 
Then $\mu_p \times \mu_p$  
%group $\bbG_{m(1)} \times \bbG_{m(1)}$ 
acts on $kG$ (given by the presentation in \eqref{eq:Ga1})
in such a way that the first copy is acting on $u$ and the second is
acting on $t$. Both copies act on the commutator $v$.
Each monomial in the $E_2$ page of this spectral sequence is then an
eigenvector of $\mu_p \times \mu_p$. The weights are
elements of $\bbZ/(p-1) \times \bbZ/(p-1)$, and are given by 
\begin{align*}
%\label{eq:weights} 
\Vert\zeta\Vert=(1,0), \\ 
\Vert\lambda\Vert = (0,1), \\  
\Vert x\Vert = (0,1), \\ 
\Vert\eta\Vert=(1,1). 
\end{align*}

%$\Vert\zeta\Vert=(1,0)$, $\Vert\lambda\Vert = (0,1)$,  $\Vert x\Vert = (0,1)$ and $\Vert\eta\Vert=(1,1)$. 
The differentials
in the spectral sequence have to preserve both the weight and the 
$\bbZ/2$-grading. The latter implies that $x$, $\zeta^2$ cannot be hit by $d_2(\eta)$ and, 
hence, survive to $E_\infty$.  Since $\dim H^{1,*}(G,k) \leq 2$ by \eqref{eq:dim},  we conclude that $\eta$ must die in $E_\infty$, and, hence, 
$d_2(\eta)=\lambda\zeta$. By the Newton--Leibniz rule, we get that a monomial 
$\lambda^\epsilon \eta^a \zeta^bx^c$ does not survive in $E^3$ if 
\begin{equation} 
\label{eq:e3}
\{\epsilon =1, a \leq p-1, 1 \leq b\}
\end{equation}
in which case it is not in the kernel of $d_2$ or if 
\begin{equation} 
\label{eq:e4}
\{\epsilon =0  \text{ and } 1 \leq  a \leq p-1\},
\end{equation}
in which case it is in the image of $d_2$. On the  other hand, $d_2(\eta^p)=0$. 
%of weight $(1,1)$ and odd grading.

We conclude that  the $E_3$ page is generated by the permanent cycles  $\lambda$, $\zeta$ and $x$
on the base, the element $\eta^p$ on the fibre, and
$\lambda\eta,\lambda\eta^2,\dots,\lambda\eta^{p-1}$ in the first column. Moreover, $E_3$ has the relations
\begin{equation}
\label{eq:rel} 
(\lambda \eta^i) \zeta =0, \quad (\lambda \eta^i)(\lambda \eta^j) =0
\end{equation}
for $1 \leq i,j \leq p-1$. Since $\cP^\shalf(\eta)=\eta^p$ and $\cP^\shalf(\lambda\zeta)=0$, Kudo's
transgression theorem (\cite[Theorem 3.4]{May:1970a}) implies that $\eta^p$ survives to the $E_\infty$
page of the spectral sequence.

{\tiny
\begin{center}
\begin{picture}(160,270)(-10,-10)
\put(0,0){\line(1,0){140}}
\put(0,0){\line(0,1){250}}
\put(0,40){\line(1,0){140}}
\put(0,80){\line(1,0){140}}
%\dashline{3}(0,120)(140,120)
\put(0,120){\line(1,0){140}}
\put(0,160){\line(1,0){140}}
\put(0,200){\line(1,0){140}}
\put(0,240){\line(1,0){140}}
\put(40,0){\line(0,1){250}}
\put(80,0){\line(0,1){250}}
\put(120,0){\line(0,1){250}}
\put(0,0){\line(1,1){40}}
\put(40,0){\line(1,1){40}}
\put(80,0){\line(1,1){40}}
\put(40,40){\line(1,1){40}}
\put(40,80){\line(1,1){40}}
%\put(40,120){\line(1,1){40}}
\put(40,160){\line(1,1){40}}
\put(0,200){\line(1,1){40}}
\put(13,26){$1$}
\put(53,26){$\lambda$}
\put(63,10){$\zeta$}
\put(89,26){$x,\zeta^2$}
\put(63,50){$\lambda\eta$}
\put(46,106){$\lambda\eta^2$}
\put(58,136){$\vdots$}
\put(43,186){$\lambda\eta^{p-1}$}
\put(23,210){$\eta^p$}
\put(170, 100){\normalsize $E_3$ page}
%\put(104,10){$\lambda\zeta$}
%\put(24,52){$\eta$}
%\put(31,51){\vector(2,-1){73}}
\end{picture}
\end{center}
}

There remains the question of the values of the differentials
$d_3,\dots,d_p$ on the elements
$\lambda\eta,\dots,\lambda\eta^{p-1}$. 

\begin{claim}\label{cl:d}
The differentials $d_3,\dots,d_{p-1}$ vanish on the elements $\lambda\eta,\dots,\lambda\eta^{p-1}$. 
\end{claim} 
\begin{proof}[Proof of Claim]  Suppose some differential $d_\ell$ is non trivial on $\lambda \eta^i$ and let 
$\lambda^\ep \eta^{i_1} x^{i_2} \zeta^{i_3}$ be in the target of that differential. If $i_1 \not = 0$, then \eqref{eq:e3}, \eqref{eq:e4}  
imply that $i_3 =0$ and $\ep =1$. Hence $\lambda \eta^i$ hits a monomial of the form $\lambda \eta^{i_1}
x^{i_2}$.  The weights of these monomials are $(i, i+1)$ and $(i_1, 1 + i_1 + i_2)$ respectively. Since the weights 
are preserved, we conclude $i = i_1$, which contradicts the fact that $d_\ell$ must lower the exponent of 
$\eta$ by $\ell -1$. 

Therefore, $i_1=0$, and the differential $d_\ell$ on $\lambda \eta^i x^j$ 
hits something on the base, a monomial of the form $\lambda^\ep x^{i_2} \zeta^{i_3}$.  The weights are $(i, i+1)$ and 
$(i_3, \ep + i_2)$ respectively. Hence, $ i_3 = i > 0$. By  \eqref{eq:e3}, \eqref{eq:e4}, $\ep=0$. The conditions on the second weight and 
the total degree now give the following equations: 
\begin{align*}
%i&\equiv i_3 &\pmod{p-1} \\
1+i&\equiv i_2 &\pmod{p-1} \\
1+i& = 2i_2+i -1,
\end{align*}
The only solution is $i = p-1$, $i_2=1$, that is, the only possible non trivial differential is $d_p(\lambda \eta^{p-1})$. 
This proves the claim. 
\end{proof}

Claim~\ref{cl:d} immediately implies that $\lambda\eta,\dots,\lambda\eta^{p-2}$ are (non-trivial) permanent cycles. We 
also conclude that all differentials up to $d_{p-1}$ vanish on all generators of $E_3$. Hence, $E_3=E_p$.  It remains 
to determine the differential $d_p$ on $\lambda\eta^{p-1}$.  

\begin{claim} \label{cl:dp} 
$d_p(\lambda\eta^{p-1}) = \alpha x\zeta^{p-1}$
with $\alpha \not = 0$.
\end{claim}
\begin{proof}[Proof of Claim] We have $\dim H^{p,*}(G,k) \leq p+1$ by \eqref{eq:dim}. On the other hand,  we 
established at least $p+1$ linearly independent cycles of total degree $p$ in $E_\infty$: 
\[\{\eta^p, \lambda x^\frac{p-1}{2}, \lambda \eta^2 x ^\frac{p-3}{2},  \ldots, 
\lambda \eta^{p-3}x, \zeta^p, x \zeta^{p-2}, \ldots, x^\frac{p-1}{2}\zeta\}.\] 
Hence, $\lambda \eta^{p-1}$ is not a permanent cycle, since otherwise
we would have $\dim H^{p,*}(G,k)\ge p+2$.
We have already computed that $d_p(\lambda\eta^{p-1})$ is some multiple of
$x\zeta^{p-1}$. This proves the claim.  
\end{proof}

This completes the determination of the $E_\infty$ page of the spectral sequence 
of the central extension. We also conclude that $x\zeta^{p-1}$ is zero in $H^{p+1,0}(G,k)$. 

To describe the cohomology ring $H^{*,*}(G,k)$, we start by giving
elements of $E_\infty^{*,0}$ the same names in $H^{*,*}(G,k)$. We
choose a representative $\kappa\in H^{p,1}(G,k)$ of $\eta^p\in
E_\infty^{0,p}$; this is a non zero-divisor.
Next, choose $\lambda_2,\dots,\lambda_{p-1}$ to 
be representatives in $H^{*,*}(G,k)$ of the elements $\lambda\eta,\dots,\lambda\eta^{p-2}$ in $E_\infty$,
as follows. Arguing with congruences as before, we see that there is only one dimension
in each of these degrees with the correct weight for the 
action of $\mu_p\times\mu_p$, so this gives a well
defined representative. We also write $\lambda_1$ for $\lambda$.

Using weights and congruences, 
%which we leave as an exercise for an inquisitive reader, 
we see that the product $\lambda_i\zeta$ is equal to zero. Similarly,
$\lambda_i\lambda_j$ is either zero or a multiple of
$x\zeta^{p-2}$, and the latter can only happen when $i+j= p$.
In the case where $i+j=p$, we claim that $\lambda_i\lambda_j$ is a
non-zero multiple of $x\zeta^{p-2}$. The proof of
this claim uses the local cohomology spectral sequence in the form of
Corollary~\ref{co:Poincare}, and this will complete the computation
of $H^{*,*}(G,k)$.

\begin{claim}
$H^{*,*}(G,k)$ is Cohen--Macaulay. A regular homogeneous sequence of
parameters is given by $\kappa\in H^{p,1}(G,k)$ and $x+\zeta^2\in H^{2,0}(G,k)$.
\end{claim}
\begin{proof}[Proof of Claim]
It suffices to show that $\eta^p$ and $x+\zeta^2$ form a regular
sequence in $E_\infty$. Since $\eta^p$ is a non zero-divisor, this
amounts to showing that $x+\zeta^2$ is a non zero-divisor on $E_\infty/(\eta^p)$.
The non-zero monomials in $E_\infty/(\eta^p)$ come in two types. The first
are the $\zeta^i x^j$ with $j=0$ if $i\ge p-1$.
Multiplying by $x+\zeta^2$, these go to
$\zeta^{i+2}x^j+\zeta^i x^{j+1}$, where the second term is zero if
$i\ge p-1$ and $j=0$. Ordering
lexicographically in $(i,j)$, we see that these are linearly
independent,
because their leading terms are linearly independent.
The second kind of monomials  are the $\lambda_i x^j$
with $1\le i\le p-1$. These go to $\lambda_i x^{j+1}$, which are again
linearly independent.
\end{proof}

We are now in a position to complete the proof of
Theorem~\ref{th:Ga1}. Since $H^{*,*}(G,k)$ is Cohen-Macaulay, Corollary~\ref{co:Poincare}
implies that $H^{*,*}(G,k)/(\kappa,x+\zeta^2)$ has Poincare duality with dualizing element 
in degree $p$.  The ring $H^{*,*}(G,k)/(\kappa,x+\zeta^2)$ has a
basis consisting of $\zeta^i\in H^{i,i}(G,k)$ with $0\le i \le p$ and 
$\lambda_i\in H^{i,i-1}(G,k)$ with $1\le i\le p-1$ (where the second degree is taken mod 2). 
The top degree dualizing element is $\zeta^p$, which is equivalent modulo $x+\zeta^2$
to $x\zeta^{p-2}$. For each element in
$H^{*,*}(G,k)/(\kappa,x+\zeta^2)$ there has to be an element whose
product with it is equal to the dualizing element. Applying this to
$\lambda_i$, we see that $\lambda_i\lambda_{p-i}$ has to be non-zero,
and is therefore a non-zero multiple of $x\zeta^{p-2}$.
Replacing some of the
$\lambda_i$ by non-zero multiples, we have
$\lambda_i\lambda_{p-i}=x\zeta^{p-2}$.
We now have all the generators and relations for the ring structure
on $H^{*,*}(G,k)$, completing the proof of Theorem~\ref{th:Ga1}. 
\end{proof}

\begin{corollary}
With $G$ as in Theorem~\ref{th:Ga1}, we have
\begin{equation*} 
\sum_{n \ge 0}t^n\dim_kH^{n,*}(G,k) = 1/(1-t)^2. 
\end{equation*}
\end{corollary}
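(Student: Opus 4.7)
The plan is to read off the Poincar\'e series directly from the ring structure established in Theorem~\ref{th:Ga1}. The key observation is that $\kappa \in H^{p,1}(G,k)$ and $x + \zeta^2 \in H^{2,0}(G,k)$ were shown to form a regular sequence of parameters, so that $H^{*,*}(G,k)$ is a free graded module over the polynomial subring $k[\kappa,\,x+\zeta^2]$. Consequently
$$\sum_{n \geq 0} t^n \dim_k H^{n,*}(G,k) \;=\; \frac{P_Q(t)}{(1-t^p)(1-t^2)},$$
where $P_Q(t)$ denotes the Hilbert series of the finite-dimensional quotient $Q = H^{*,*}(G,k)/(\kappa,\,x+\zeta^2)$.

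Next I would invoke the explicit $k$-basis of $Q$ exhibited in the proof of Theorem~\ref{th:Ga1}, namely the powers $\zeta^i$ for $0 \leq i \leq p$ together with the classes $\lambda_i$ for $1 \leq i \leq p-1$. Summing these by total degree gives
$$P_Q(t) \;=\; \sum_{i=0}^{p} t^i \,+\, \sum_{i=1}^{p-1} t^i \;=\; \frac{1-t^{p+1}}{1-t} \,+\, \frac{t-t^p}{1-t} \;=\; \frac{(1+t)(1-t^p)}{1-t}.$$

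Substituting this into the previous display produces the telescoping
$$\sum_{n \geq 0} t^n \dim_k H^{n,*}(G,k) \;=\; \frac{(1+t)(1-t^p)}{(1-t)(1-t^p)(1-t^2)} \;=\; \frac{1+t}{(1-t)(1-t^2)} \;=\; \frac{1}{(1-t)^2},$$
as claimed. There is no substantive obstacle: the whole statement is bookkeeping once Theorem~\ref{th:Ga1} is in hand. As a sanity check one may expand $1/(1-t)^2 = \sum_n (n+1)t^n$ and verify in small degree, e.g.\ in degree $3$ the four classes $\zeta^3,\ \zeta x,\ \lambda_3,\ \lambda_1 x$ give exactly $n+1=4$ basis elements, with the relations $\lambda_i\zeta=0$ and $x\zeta^{p-1}=0$ accounting for the only additional reductions.
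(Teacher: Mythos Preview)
Your argument is correct and is exactly the ``easy dimension count using the theorem'' that the paper alludes to without spelling out: you use the regular sequence $\kappa,\ x+\zeta^2$ and the explicit basis of the quotient established in the proof of Theorem~\ref{th:Ga1} to compute the Hilbert series. One very minor point: your degree-$3$ sanity check implicitly assumes $p\ge 5$ (so that $\lambda_3$ exists); for $p=3$ the four classes in degree~$3$ are $\zeta^3$, $\zeta x$, $\lambda_1 x$, and $\kappa$ instead.
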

\begin{proof}
This is an easy dimension count using the theorem.
\end{proof}

To analyze the case of a more general semidirect product as we do in Section~\ref{se:proof}, 
we don't need the force of Theorem~\ref{th:Ga1} but only a particular calculation which was obtained 
as part of the proof. 
\begin{corollary}[of the proof]
\label{co:calculation} In the notation of the proof of Theorem~\ref{th:Ga1}, we have that $d_p(\lambda \eta^{p-1})$ 
is a non-zero multiple of $x\zeta^{p-1}$. 
\end{corollary}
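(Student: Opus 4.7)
The plan is to reproduce the argument of Claim~\ref{cl:dp} inside the proof of Theorem~\ref{th:Ga1}, where this assertion was obtained en route. I would combine two independent inputs: a shape constraint on the only possible non-trivial differential out of $\lambda\eta^{p-1}$ in the central extension spectral sequence, and a dimension count against the bound $\dim H^{p,*}(G,k)\le p+1$ coming from \eqref{eq:dim2}.

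First I would invoke Claim~\ref{cl:d}. Its case analysis, carried out simultaneously with the $\mu_p\times\mu_p$-weights, the $\bbZ/2$-parity, and the total degree, shows that the only conceivable non-trivial differential on any $\lambda\eta^i$ with $1\le i\le p-1$ is $d_p(\lambda\eta^{p-1})$, and it forces the target to be a scalar multiple of $x\zeta^{p-1}$. So I may write $d_p(\lambda\eta^{p-1})=\alpha\,x\zeta^{p-1}$ with $\alpha\in k$, and the task reduces to showing $\alpha\ne 0$.

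Next I would exhibit $p+1$ linearly independent permanent cycles of total degree $p$ in the $E_\infty$ page, namely
\[
\eta^p,\ \zeta^p,\ x\zeta^{p-2},\ \ldots,\ x^{(p-1)/2}\zeta,\ \lambda x^{(p-1)/2},\ \lambda\eta^2 x^{(p-3)/2},\ \ldots,\ \lambda\eta^{p-3}x.
\]
The class $\eta^p$ survives by Kudo's transgression theorem, already invoked in the proof of Theorem~\ref{th:Ga1}; the purely base monomials are permanent cycles automatically; and the remaining classes, of the form $\lambda\eta^{2j}x^{(p-1)/2-j}$ for $0\le j\le (p-3)/2$, are permanent cycles by Claim~\ref{cl:d} because multiplication by the permanent cycle $x$ commutes with the differentials. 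Counting yields $1+(p+1)/2+(p-1)/2=p+1$ classes. If $\alpha=0$, then $\lambda\eta^{p-1}$ would give a further independent permanent cycle in total degree $p$, contradicting \eqref{eq:dim2}. Hence $\alpha\ne 0$, which is exactly the claim.

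The main thing to be careful about is verifying that the listed $p+1$ cycles are linearly independent at $E_\infty$ and not merely at $E_p$. Since they lie in distinct tri-degrees of the $E_p$ page, and the only conceivable differential left to run on any of them is the one being studied on $\lambda\eta^{p-1}$ itself, no hidden relations can arise; the bookkeeping is the same as the one already dispatched in the proof of Claim~\ref{cl:d}.
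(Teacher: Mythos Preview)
Your proposal is correct and follows essentially the same route as the paper's proof of Claim~\ref{cl:dp}: use Claim~\ref{cl:d} to pin the target as a multiple of $x\zeta^{p-1}$, exhibit the same list of $p+1$ permanent cycles in total degree $p$, and invoke the bound \eqref{eq:dim2} to force $\alpha\ne 0$. One small imprecision: the $p+1$ cycles do \emph{not} all lie in distinct tri-degrees (the base monomials $\zeta^p,x\zeta^{p-2},\dots,x^{(p-1)/2}\zeta$ all sit in $E_\infty^{p,0,1}$), but this is harmless since they are distinct monomials in the $E_3=E_p$ page and nothing can hit the bottom row.
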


\begin{remark}
\label{re:Zp}
The group algebra of the semidirect product $(\bbG_a^- \times \bbG_a^-)\rtimes \bbZ/p$ is
generated by elements $u$, $v$ and $g$ satisfying $u^2=0$, $v^2=0$, $g^p=1$,
$uv+vu=0$, $gu=(u+v)g$, $gv=vg$. Writing $t$ for $g-1$, this becomes
\[ u^2=v^2=uv+vu=t^p=0,\quad tv=vt,\quad tu=ut+v+vt. \]
Substituting $v'=v+vt$ then gives the presentation of the group
algebra studied in this section.
Since the cohomology only depends on the algebra structure, not on
the comultiplication, we get the same answer as in the case of 
$(\bbG_a^- \times \bbG_a^-)\rtimes \bbG_{a(1)}$ computed in this section.
\end{remark}

%%%%%%%%%%%%%%%Invariant theory%%%%%%%%%%%%%%%%%%%
%%%%%%%%%%%%%%%%%%%%%%%%%%%%%%%%%%%%%%%%%%

\section{An invariant theory computation}

Let $H=\bbG_{a(r)} \times (\bbZ/p)^s$, acting on $\bbG_a^-\times \bbG_a^-$
as in Section \ref{se:semidirect}, and let $G$ be the semidirect
product. In preparation for the computation of $H^{*,*}(G,k)$, we
begin with an invariant theory computation.

We have $H^{*,*}(\bbG_a^- \times \bbG_a^-,k) \cong k[X,Y]$
where $X$ and $Y$ are in degree $(1,1)$. We choose the notation so
that $Y$ is fixed by this action, and $X$ is sent to $X$ plus multiples of
$Y$. In this section, we compute
the invariants of such an action. To this end, we consider $k[X,Y]$ to
be the ring of polynomial functions on the vector space $V$ with basis 
$u$ and $v$, so that $Y$ and $X$ form the dual basis of the linear
functions on $V$.

We begin with the case $s=0$, namely $H=\bbG_{a(r)}$.
In general, an action of a group scheme $G$ on a scheme $Z$
over a scheme $S$, is given
by a map $G \times_S Z \to Z$ satisfying the usual associative law
defining an action. Corresponding to this is a map of coordinate rings
$k[Z] \to k[G] \otimes_{k[S]} k[Z]$ giving the coaction of $k[G]$ on
$k[Z]$.  Then the fixed points  $k[Z]^G$ is the subring of $k[Z]$
consisting of those $f$ whose image in $k[G] \otimes_{k[S]} k[Z]$ 
under the comodule maps is equal to $1 \otimes f$.

In our case, we have $k[\bbG_{a(r)}]=k[t]/(t^{p^r})$ with $t$ a
primitive element in the Hopf structure. 
The action $\bbG_{a(r)}$ on 
$V$ corresponds to a map $\bbG_{a(r)} \times_{\Spec k} V \to V$,
and then to a map of coordinate rings 
$k[X,Y] \to k[t]/(t^{p^r}) \otimes k[X,Y]$. 
The fact that $Y$ is fixed by the action implies that $Y$ maps to
$1\otimes Y$. The fact that $X$ is sent to $X$ plus multiples of $Y$,
together with the identities describing a coaction,
imply that $X$ maps to an element of the form $f(t) \otimes Y +
1 \otimes X$ where $f$ is a linear combination of the $t^{p^i}$ with
$0\le i < r$. Faithfulness of the action then implies that the term
with $i=0$ is non-zero. Thus $f(t)$ is primitive, and 
there is an automorphism of $\bbG_{a(r)}$ 
sending $f(t)$ to $t$. So without loss of generality,
the action is given by $X \mapsto t\otimes Y + 1 \otimes X$.

\begin{lemma}\label{le:Gar}
The invariants of the action of $\bbG_{a(r)}$ on $k[X,Y]$ 
are given by 
\begin{equation*}
k[X,Y]^{\bbG_{a(r)}}=k[X^{p^r},Y].
\end{equation*}
\end{lemma}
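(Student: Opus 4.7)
The plan is to verify both inclusions directly from the explicit coaction $Y\mapsto 1\otimes Y$, $X\mapsto t\otimes Y + 1\otimes X$. For the easy containment $k[X^{p^r},Y]\subseteq k[X,Y]^{\bbG_{a(r)}}$, I would observe that $Y$ is invariant by construction, and that
\[ \Delta(X^{p^r}) = (t\otimes Y + 1\otimes X)^{p^r} = t^{p^r}\otimes Y^{p^r} + 1\otimes X^{p^r} = 1\otimes X^{p^r}, \]
using the Frobenius identity in the commutative $\bbF_p$-algebra $k[t]/(t^{p^r})\otimes k[X,Y]$ together with the relation $t^{p^r}=0$.

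For the reverse inclusion, I would extract linear constraints on the coefficients of a general invariant polynomial from the binomial expansion
\[ \Delta(X^a Y^b) = \sum_{i=0}^{\min(a,\,p^r-1)} \binom{a}{i}\, t^i \otimes X^{a-i}Y^{b+i}. \]
Writing $f=\sum_{a,b} c_{a,b}X^a Y^b$ and imposing $\Delta(f)=1\otimes f$, the vanishing of the coefficient of $t^i\otimes X^{a-i}Y^{b+i}$ for each $i\ge 1$ becomes $c_{a,b}\binom{a}{i}=0$ in $k$ for every $1\le i\le \min(a,p^r-1)$. In particular, $c_{a,b}\ne 0$ forces $\binom{a}{i}\equiv 0\pmod p$ for every $i$ in that range.

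The closing step is a combinatorial unpacking. If $1\le a\le p^r-1$, then $i=a$ is admissible and $\binom{a}{a}=1$ immediately forces $c_{a,b}=0$. If $a\ge p^r$, then by Lucas's theorem $\binom{a}{i}\equiv 0\pmod p$ holds for every $1\le i\le p^r-1$ if and only if every base-$p$ digit of $a$ in positions $0,\dots,r-1$ vanishes, equivalently $p^r\mid a$. Hence every surviving monomial lies in $k[X^{p^r},Y]$, completing the reverse containment. There is no serious obstacle; the only subtlety worth flagging is the case split on the range of $i$ for which the constraints are nontrivial, separating the small-$a$ case (handled by $\binom{a}{a}=1$) from the Lucas step for $a\ge p^r$.
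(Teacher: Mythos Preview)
Your proof is correct and fills in exactly the ``easy computation'' that the paper alludes to without detail. The verification that distinct monomials $X^aY^b$ contribute to distinct $t^i\otimes X^{a-i}Y^{b+i}$ (so the constraints do not mix coefficients) and the Lucas-theorem argument are the natural way to carry this out.
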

\begin{proof}
This is an easy computation.
\end{proof}

Next we describe the case $r=0$, namely $H=\langle
g_1,\dots,g_s\rangle\cong (\bbZ/p)^s$ with the $g_i$ commuting
elements of order $p$. In this case, the action again fixes $Y$, and
we have $g_i(X)=X-\mu_iY$  ($1\le i \le s$).
The fact that the action is faithful is equivalent to the statement
that the field elements $\mu_i$ are linearly independent over the
ground field $\bbF_p$. 
Then the orbit product 
\[ \phi(X,Y)= \prod_{g\in(\bbZ/p)^s} g(X) = 
\prod_{(a_1,\dots,a_s)\in(\bbF_p)^s}X+(a_1\mu_1+\dots+a_s\mu_s)Y \]
is clearly an invariant.  

\begin{lemma}\label{le:Zps}
The invariants of $(\bbZ/p)^s$ on $k[X,Y]$ are given by 
\[ k[X,Y]^{(\bbZ/p)^s}=k[\phi(X,Y),Y], \] 
where $\phi(X,Y)$ is given above.
\end{lemma}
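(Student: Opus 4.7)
The plan is to realize $k[X,Y]$ as a free module of rank $p^s=|H|$ over $S:=k[\phi(X,Y),Y]$, and then deduce $k[X,Y]^H=S$ using Galois theory.

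I would begin by noting that $\phi(X,Y)$ is invariant since the elements of $H$ merely permute the linear factors of its defining product, while $Y$ is invariant by hypothesis. The top-$X$-degree term of $\phi$ is $X^{p^s}$ and $Y$ has $X$-degree zero, so $\phi$ and $Y$ are algebraically independent, making $S=k[\phi,Y]$ a polynomial ring in two generators.

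The main step is to show that $R:=k[X,Y]$ is free of rank $p^s$ over $S$, with basis $1,X,X^2,\ldots,X^{p^s-1}$. The monic polynomial $\phi(T,Y)-\phi\in S[T]$ of degree $p^s$ in $T$ has $X$ as a root, so these elements generate $R$ as an $S$-module. For freeness, I would compare graded Hilbert series (assigning $|X|=|Y|=1$, $|\phi|=p^s$):
\[ \frac{1/(1-t)^2}{1/\bigl((1-t)(1-t^{p^s})\bigr)} = 1+t+\cdots+t^{p^s-1}, \]
which matches the Hilbert series of the free $S$-module on $1,X,\ldots,X^{p^s-1}$. Alternatively, one can invoke the Cohen--Macaulay property of the polynomial ring $R$ together with the fact that $\phi,Y$ form a homogeneous system of parameters.

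Passing to fraction fields then gives $[k(X,Y):k(\phi,Y)]=p^s$. Since $H$ acts faithfully on $k(X,Y)$ (the $p^s$ orbit elements $X+\lambda Y$, with $\lambda$ ranging over the $\bbF_p$-span $\Lambda$ of the $\mu_i$, are distinct by linear independence), Artin's lemma gives $[k(X,Y):k(X,Y)^H]=p^s$, so the inclusion $k(\phi,Y)\subseteq k(X,Y)^H$ is forced to be an equality. The proof would conclude with the intersection argument $R^H=R\cap k(X,Y)^H=R\cap k(\phi,Y)$: using the $S$-basis $1,X,\ldots,X^{p^s-1}$ of $R$, any element of $R$ whose image lies in $k(\phi,Y)$ must already belong to the $S$-span of $1$, i.e.\ to $S=k[\phi,Y]$. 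The crux of the argument is the freeness of $R$ over $S$; once that is established, everything else reduces to standard Galois theory.
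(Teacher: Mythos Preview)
Your argument is correct. The paper itself does not give a proof but simply cites Proposition~2.2 of Campbell--Shank--Wehlau, so you have supplied a self-contained argument where the paper defers to the literature. Your route---freeness of $k[X,Y]$ over $S=k[\phi,Y]$ combined with Artin's lemma on the fraction fields---is a standard and clean way to handle such computations. A minor streamlining of the final step: since $S$ is a polynomial ring it is integrally closed, and $k[X,Y]$ is integral over $S$ (via the monic relation $\phi(T,Y)-\phi=0$), so $k[X,Y]\cap\operatorname{Frac}(S)=S$ follows directly without appealing to the explicit basis. Either way, the crux is exactly as you identify: once the degree count $[k(X,Y):k(\phi,Y)]=p^s$ is in hand, the rest is routine Galois theory.
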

\begin{proof}
See for example Proposition 2.2 of Campbell, Shank and Wehlau \cite{Campbell/Shank/Wehlau:2013a}.
\end{proof}

Putting these together, we have the following theorem.

\begin{theorem}\label{th:invariants}
The invariants of $\bbG_{a(r)}\times (\bbZ/p)^s$ 
on $k[X,Y]$ are given by
\[ k[X,Y]^{\bbG_{a(r)}\times (\bbZ/p)^s}=k[\phi(X,Y)^{p^r},Y]. \]
\end{theorem}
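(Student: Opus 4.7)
The plan is to compute the invariants in two stages, first under $\bbG_{a(r)}$ and then under $(\bbZ/p)^s$. Since $G = \bbG_{a(r)} \times (\bbZ/p)^s$ is a direct product acting on $k[X,Y]$, taking invariants in stages is valid:
\[
k[X,Y]^{G} = \bigl(k[X,Y]^{\bbG_{a(r)}}\bigr)^{(\bbZ/p)^s}.
\]
Lemma~\ref{le:Gar} handles the inner invariants, giving $k[X^{p^r}, Y]$. It then remains to compute the $(\bbZ/p)^s$-invariants of this ring under the induced action.

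Under the induced action, each $g_i \in (\bbZ/p)^s$ fixes $Y$ and sends $X^{p^r} \mapsto (X - \mu_i Y)^{p^r} = X^{p^r} - \mu_i^{p^r} Y^{p^r}$, by Frobenius. So the action preserves the subring $k[X^{p^r}, Y^{p^r}]$, and $k[X^{p^r}, Y]$ is a free module of rank $p^r$ over this subring with basis $\{1, Y, \dots, Y^{p^r-1}\}$, each basis element being $(\bbZ/p)^s$-fixed. Taking invariants commutes with this decomposition, giving
\[
k[X^{p^r}, Y]^{(\bbZ/p)^s} = \bigoplus_{j=0}^{p^r-1} k[X^{p^r}, Y^{p^r}]^{(\bbZ/p)^s} \cdot Y^j,
\]
so it suffices to compute the invariants of $k[X^{p^r}, Y^{p^r}]$.

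Now I would apply Lemma~\ref{le:Zps} to $k[X^{p^r}, Y^{p^r}]$ with constants $\mu_i^{p^r}$ in place of $\mu_i$. These constants remain $\bbF_p$-linearly independent, since if $\sum c_i \mu_i^{p^r} = 0$ with $c_i \in \bbF_p$, then $(\sum c_i \mu_i)^{p^r} = \sum c_i^{p^r} \mu_i^{p^r} = \sum c_i \mu_i^{p^r} = 0$ (using $c_i^{p^r} = c_i$), forcing all $c_i = 0$. The lemma gives invariants $k[\phi', Y^{p^r}]$ where
\[
\phi' = \prod_{(a_1,\dots,a_s)\in \bbF_p^s} \bigl(X^{p^r} - (a_1\mu_1^{p^r}+\dots+a_s\mu_s^{p^r})Y^{p^r}\bigr).
\]
The same identity $a_i^{p^r}=a_i$ together with Frobenius identifies $\phi'$ with $\phi(X,Y)^{p^r}$. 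Substituting back and using that $Y^{p^r}$ is a polynomial in $Y$, we obtain
\[
k[X^{p^r}, Y]^{(\bbZ/p)^s} = \bigoplus_{j=0}^{p^r-1} k\bigl[\phi(X,Y)^{p^r}, Y^{p^r}\bigr]\cdot Y^j = k\bigl[\phi(X,Y)^{p^r}, Y\bigr],
\]
as claimed. There is no genuinely hard step; the main point is recognizing the free-module decomposition over $k[X^{p^r}, Y^{p^r}]$, after which Lemma~\ref{le:Zps} and a short Frobenius computation finish the argument.
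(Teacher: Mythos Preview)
Your proof is correct and follows essentially the same two-stage strategy as the paper, except that you take $\bbG_{a(r)}$-invariants first and $(\bbZ/p)^s$-invariants second, whereas the paper's one-line proof applies Lemma~\ref{le:Zps} first and then Lemma~\ref{le:Gar}. In either order a small bridging argument is needed to see that the second lemma applies to the output of the first (the paper leaves this implicit), and your free-module decomposition over $k[X^{p^r},Y^{p^r}]$ handles it cleanly.
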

\begin{proof}
%Since $V^{\bbG_{a(r)} \times (\bbZ/p)^s}=(V^{(\bbZ/p)^s})^{\bbG_{a(r)}}$, 
This follows by applying first Lemma \ref{le:Zps} and then Lemma \ref{le:Gar}.
\end{proof}

%%%%%%%%%%%%%%%%%%%%Symmetric powers%%%%%%%%%%%%%%%%%%%
%%%%%%%%%%%%%%%%%%%%%%%%%%%%%%%%%%%%%%%%%%%%%%%%
\section{Structure of symmetric powers}\label{se:sympowers}

We can use the computation of the last section to help us understand
the structure of the polynomial functions on the two dimensional space
$V$, as a module for $H=\bbG_{a(r)} \times (\bbZ/p)^s$. Note that the
space of polynomials of degree $n$ is $S^n(V^\#)$, and has a basis
consisting of the monomials $X^iY^{n-i}$ for $0\le i \le n$. In
particular, the dimension of $S^n(V^\#)$ is $n+1$.

\begin{lemma}\label{le:fixedpoints}
Let $M$ be a $kH$-module whose fixed points $M^H$ are one
dimensional. Then $M$ is indecomposable and 
$\dim_k(M)\le p^{r+s}$, with equality if and only
if $M$ is projective.
\end{lemma}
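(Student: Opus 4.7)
The plan is to exploit the fact that $kH$ is a local Frobenius algebra whose simple module is the trivial module $k$ and whose socle is also $k$ (because $H$ is unipotent, hence unimodular, so $\delta_H$ is trivial in even degree). Once this setup is in place, all three conclusions follow from standard arguments about modules over a local self-injective algebra with simple socle.

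First I would record the key structural facts about $kH$. Because $H = \bbG_{a(r)} \times (\bbZ/p)^s$ is unipotent, every simple $kH$-module is trivial; equivalently, $kH$ is a local ring with residue field $k$, of total dimension $p^{r+s}$. Since $kH$ is the group algebra of a finite supergroup scheme it is Frobenius, and unimodularity of $H$ gives $\Soc(kH) \cong k$, so $kH$ is the injective hull of the trivial module. A useful consequence is that for any $kH$-module $M$ one has $M^H = \Soc(M)$: every simple submodule is trivial, and $H$-fixed vectors generate trivial submodules.

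Next I would deduce indecomposability. If $M = M_1 \oplus M_2$ with both summands nonzero, then each $M_i$ has a nonzero socle, hence nonzero $H$-fixed points, giving $\dim_k M^H \geq 2$ and contradicting the hypothesis. Hence $M$ is indecomposable.

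For the dimension bound, the hypothesis $\dim_k M^H = 1$ says $\Soc(M) \cong k$, so $M$ has simple socle. Because $kH$ is injective and is the injective hull of $k$, any essential extension of $k$ embeds in $kH$; concretely, any nonzero map $\Soc(M) \to kH$ extends along the inclusion $\Soc(M) \hookrightarrow M$ to an injection $M \hookrightarrow kH$. This gives $\dim_k M \leq \dim_k kH = p^{r+s}$. Finally, equality forces the embedding $M \hookrightarrow kH$ to be an isomorphism, whence $M \cong kH$ is the regular (free, hence projective) module; conversely, if $M$ is projective it is a direct sum of copies of the indecomposable projective $kH$, and indecomposability together with $\dim M^H = 1$ forces $M \cong kH$, of dimension $p^{r+s}$. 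No step here should be a serious obstacle; the only point requiring care is the identification $M^H = \Soc(M)$ and the injectivity/self-duality of $kH$, both of which are standard consequences of unipotency and the Hopf-algebra structure.
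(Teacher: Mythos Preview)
Your argument is correct and follows essentially the same route as the paper's proof: both use that $kH$ is a local self-injective algebra, so that the hypothesis $\dim_k M^H=1$ forces the injective hull of $M$ to be $kH$, from which indecomposability, the dimension bound, and the equality case all follow. Your write-up is simply a more explicit unpacking of the paper's terse four-line version; the remark about unimodularity is not strictly needed here (locality already forces the unique simple, hence the socle of $kH$, to be $k$).
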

\begin{proof}
Since $H$ is unipotent, $kH$ is a local self-injective algebra. So if
$M^H$ is one dimensional, then the injective hull of $M$ is $kH$. 
Since $kH$ has dimension $p^{r+s}$, the lemma follows.
\end{proof}

\begin{theorem}\label{th:sympowers-low}
For $n < p^{r+s}-1$, the symmetric $n$th power $S^n(V^\#)$ is a
non-projective indecomposable $kH$-module. 
The module 
$S^{p^{r+s}-1}(V^\#)$ is a free $kH$-module of rank one.
\end{theorem}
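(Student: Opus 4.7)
The plan is to apply Lemma~\ref{le:fixedpoints} directly. For this, I need to show that the $H$-fixed points $S^n(V^\#)^H$ are one-dimensional for every $0\le n\le p^{r+s}-1$. Once this is established, the lemma immediately yields that $S^n(V^\#)$ is indecomposable, and since $\dim_k S^n(V^\#)=n+1\le p^{r+s}$, the projectivity clause of the lemma gives the dichotomy: $S^n(V^\#)$ is projective if and only if $n+1=p^{r+s}$, i.e.\ $n=p^{r+s}-1$.

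To count the fixed points, I would identify $S^*(V^\#)=\bigoplus_n S^n(V^\#)$ with the polynomial ring $k[X,Y]$ graded by total degree, where $X$ and $Y$ are the basis dual to $u,v$, and the $H$-action is the one set up at the start of the invariant theory section. Then Theorem~\ref{th:invariants} asserts
\[
k[X,Y]^H = k\bigl[\phi(X,Y)^{p^r},\,Y\bigr].
\]
Since $\phi(X,Y)$ is homogeneous of total degree $p^s$, the generator $\phi(X,Y)^{p^r}$ has degree $p^{r+s}$, and $Y$ has degree $1$. Hence the degree $n$ part of the invariant ring has a basis consisting of the monomials $(\phi^{p^r})^iY^{n-ip^{r+s}}$ with $0\le ip^{r+s}\le n$, giving dimension $\lfloor n/p^{r+s}\rfloor +1$. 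In particular, for $n<p^{r+s}$ the only such monomial is $Y^n$, so $S^n(V^\#)^H$ is one-dimensional.

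Plugging this into Lemma~\ref{le:fixedpoints} finishes the argument: for each $n\le p^{r+s}-1$, $S^n(V^\#)$ is an indecomposable $kH$-module embedding into its injective hull $kH$; it saturates the bound $\dim\le p^{r+s}$ exactly once, at $n=p^{r+s}-1$, where it must coincide with $kH$ itself and therefore is a free module of rank one. There is no serious obstacle here beyond bookkeeping; the only thing to be a little careful about is the degree arithmetic for $\phi^{p^r}$ and the verification that the graded component $S^n(V^\#)$ of $k[X,Y]$ is genuinely $H$-stable with the claimed action, both of which are immediate from the setup in Section~\ref{se:semidirect}.
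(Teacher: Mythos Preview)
Your proof is correct and follows exactly the paper's approach: use Theorem~\ref{th:invariants} to see that $S^n(V^\#)^H$ is one-dimensional for $n\le p^{r+s}-1$, then apply Lemma~\ref{le:fixedpoints}. You have simply spelled out the degree count for the invariant ring in more detail than the paper does.
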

\begin{proof}
It follows from Theorem \ref{th:invariants} that $S^n(V^\#)^H$ is one
dimensional for $n\le p^{r+s}-1$. The theorem therefore follows
from Lemma \ref{le:fixedpoints}.
\end{proof}

\begin{definition}
Let $f(X,Y)=\sum_{i=0}^n a_iX^iY^{n-i}$ be a degree $n$ 
homogeneous polynomial in $X$ and $Y$. Then the
\emph{leading term} of $f$ is the term $a_iX^iY^{n-i}$ for the
largest value of $i$ with $a_i\ne 0$.
\end{definition}

\begin{theorem}
For $n\ge p^{r+s}$, we have $S^n(V^\#) \cong kH \oplus
S^{n-p^{r+s}}(V^\#)$.
\end{theorem}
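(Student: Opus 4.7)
The plan is to leverage the $H$-invariant polynomial $\phi(X,Y)^{p^r}$ of degree $p^{r+s}$ provided by Theorem~\ref{th:invariants} in order to exhibit the direct sum decomposition explicitly inside $k[X,Y]$.  Since $\phi^{p^r}$ is invariant, multiplication by it defines a $kH$-linear injection $S^{n-p^{r+s}}(V^\#) \hookrightarrow S^n(V^\#)$, so I only need to produce a $kH$-stable complement in $S^n(V^\#)$ that is free of rank one over $kH$.

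The natural candidate for the complement is the subspace $W \subseteq S^n(V^\#)$ spanned by the monomials $X^i Y^{n-i}$ with $0 \leq i \leq p^{r+s}-1$. First I would verify that $W$ is a $kH$-submodule: since $H$ fixes $Y$ and sends $X$ to $X$ plus a multiple of $Y$, each $h \cdot (X^i Y^{n-i})$ expands into a sum of monomials $X^j Y^{n-j}$ with $j \leq i$, and therefore stays in $W$.  Second, I would note that multiplication by the $H$-invariant element $Y^{n-p^{r+s}+1}$ induces a $kH$-equivariant isomorphism $S^{p^{r+s}-1}(V^\#) \xrightarrow{\cong} W$, and by Theorem~\ref{th:sympowers-low} the source is isomorphic to $kH$.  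Hence $W \cong kH$ as a $kH$-module.

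It remains to show that $S^n(V^\#) = W \oplus \phi^{p^r}\cdot S^{n-p^{r+s}}(V^\#)$ as vector spaces, which then upgrades automatically to a $kH$-module decomposition.  The key observation is that $\phi^{p^r}$ has leading term (in the sense of the monomial with highest power of $X$) equal to $X^{p^{r+s}}$, so the images of the monomial basis elements $X^d Y^{n-p^{r+s}-d}$ of $S^{n-p^{r+s}}(V^\#)$ under multiplication by $\phi^{p^r}$ are polynomials whose leading terms are exactly $X^i Y^{n-i}$ for $p^{r+s} \leq i \leq n$, pairwise distinct.  Combined with the basis $\{X^i Y^{n-i} : 0 \leq i \leq p^{r+s}-1\}$ of $W$, these $n+1$ elements form a basis of $S^n(V^\#)$ after a triangular change of coordinates, so $W$ and $\phi^{p^r} \cdot S^{n-p^{r+s}}(V^\#)$ are complementary.

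I do not foresee a serious obstacle, since the required ingredients are all in hand: the explicit description of the invariants, the identification $S^{p^{r+s}-1}(V^\#) \cong kH$, and the joint triangularity of the $H$-action and of multiplication by $\phi^{p^r}$ with respect to the monomial basis ordered by the power of $X$.  The only point requiring any care is ensuring that $W$ is genuinely $H$-stable, which is immediate from the shape of the action on $X$ and $Y$.
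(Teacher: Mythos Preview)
Your proposal is correct and follows essentially the same route as the paper: both use multiplication by the invariant $Y^{n+1-p^{r+s}}$ to embed $S^{p^{r+s}-1}(V^\#)\cong kH$ and multiplication by the invariant $\phi(X,Y)^{p^r}$ to embed $S^{n-p^{r+s}}(V^\#)$, and then a leading-term argument to see these images give an internal direct sum. Your explicit description of $W$ as a span of monomials and the separate check that it is $H$-stable are harmless extras---$H$-stability is automatic since $W$ is the image of multiplication by the invariant $Y^{n+1-p^{r+s}}$---but otherwise the arguments coincide.
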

\begin{proof}
Consider the map 
$S^{p^{r+s}-1}(V^\#) \to S^n(V^\#)$ given by multiplication by
$Y^{n+1-p^{r+s}}$, and
the map $S^{n-p^{r+s}}(V^\#)\to S^n(V^\#)$ given by
multiplication by $\phi(X,Y)$. 
Examining the leading terms of the images of monomials
under these maps, we see that these maps are injective, 
the images span and intersect in
zero. Therefore $S^n(V^\#)$  is an internal direct sum of 
$Y^{n+1-p^{r+s}}.S^{p^{r+s}-1}(V^\#)$
and $\phi(X,Y).S^{n-p^{r+s}}(V^\#)$. By Theorem \ref{th:sympowers-low},
the first summand is isomorphic to $kH$.
\end{proof}

\begin{corollary}
The $kH$-module $S^n(V^\#)$ is projective if and only if $n$ is
congruent to $-1$ modulo $p^{r+s}$.\qed
\end{corollary}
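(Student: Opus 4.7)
The plan is to deduce this corollary directly from the two preceding results by an easy reduction modulo $p^{r+s}$. First I would write any $n \ge 0$ uniquely as $n = q p^{r+s} + m$ with $0 \le m \le p^{r+s}-1$, and iterate the decomposition
\[ S^n(V^\#) \cong kH \oplus S^{n-p^{r+s}}(V^\#) \]
from the preceding theorem $q$ times to obtain an isomorphism
\[ S^n(V^\#) \cong (kH)^{\oplus q} \oplus S^m(V^\#). \]
Since $kH$ is projective, this whole module is projective if and only if the remaining summand $S^m(V^\#)$ is projective.

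Now I would invoke Theorem~\ref{th:sympowers-low}, which handles exactly the range $0 \le m \le p^{r+s}-1$: for $m < p^{r+s}-1$ it asserts that $S^m(V^\#)$ is a non-projective indecomposable, while for $m = p^{r+s}-1$ it asserts that $S^m(V^\#)$ is free of rank one over $kH$, hence projective. Therefore $S^m(V^\#)$ is projective precisely when $m = p^{r+s}-1$, equivalently when $n \equiv -1 \pmod{p^{r+s}}$, which is the claim.

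Since both ingredients are already proved, there is no real obstacle; the only thing to be slightly careful about is that projectivity of a direct sum is equivalent to projectivity of each summand, which is immediate since $kH$ is a self-injective local algebra so projective and injective modules coincide and summands of projectives are projective. This is why the argument reduces cleanly to the ``remainder'' summand $S^m(V^\#)$.
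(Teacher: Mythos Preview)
Your proposal is correct and is exactly the argument the paper has in mind; the paper simply writes \qed because the corollary is immediate from the two preceding results (the periodicity decomposition for $n\ge p^{r+s}$ and Theorem~\ref{th:sympowers-low} for $0\le m\le p^{r+s}-1$). Your remark about summands of projectives being projective is all that is needed, and in fact holds over any ring, so the self-injective local observation is not even required.
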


Next, we examine the modules $S^{p^i-1}(V^\#)$ with $1\le i < r+s$. 
We have seen that these modules are not projective, but we shall
show that the complexity is exactly $r+s-i$, and we shall identify 
the annihilator of cohomology. The method we use is a variation of
the Steinberg tensor product theorem.

\begin{lemma}\label{le:hyperplane}
The $kH$-module $S^{p-1}(V^\#)$
is a uniserial module whose rank variety
is the hyperplane consisting of the points
$(\gamma_1,\dots,\gamma_r,\alpha_1,\dots,\alpha_s)\in \bbA^{r+s}(k)$ such that
\[ -\gamma_1+\alpha_1\mu_1+\dots+\alpha_s\mu_s=0. \]
\end{lemma}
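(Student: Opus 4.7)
The plan is to split the proof into three steps: (i) record the action of $kH$ on the monomial basis $e_a := X^aY^{p-1-a}$, $0\le a\le p-1$, of $S^{p-1}(V^\#)$; (ii) deduce uniseriality; (iii) compute the rank variety by evaluating a generic $\pi$-point. For step (i), the primitive $s_1$ acts as the derivation $Y\,\partial/\partial X$, so $s_1 e_a = a e_{a-1}$. The group-like $g_j$ acts as the algebra automorphism $X\mapsto X-\mu_jY$, $Y\mapsto Y$, and $t_j = g_j - 1$ satisfies $t_je_a = \sum_{k=1}^a\binom{a}{k}(-\mu_j)^k e_{a-k}$, with leading term $-a\mu_je_{a-1}$. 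The higher generators $s_i$ (for $i\ge 2$) act trivially on $V^\#$ by the formulas in Section~\ref{se:semidirect}; a short induction using the module-algebra property and the Witt-polynomial coproduct of $s_i$---in which every correction factor $s_1^j$ with $j\ge 2$ annihilates each degree-one generator---shows they act trivially on $S^{p-1}(V^\#)$ as well.

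For uniseriality, note that $s_1^a(X^{p-1})$ is a nonzero scalar multiple of $e_{p-1-a}$ for $0\le a\le p-1$, so $S^{p-1}(V^\#)$ is cyclic with generator $X^{p-1}$. Every element of the augmentation ideal of $kH$ strictly lowers the leading $X$-degree by step (i). Hence if $N$ is a submodule of $S^{p-1}(V^\#)$ containing an element whose leading $X$-degree is $a$, then iterating $s_1$ produces a nonzero multiple of $e_0$ in $N$, and working upward by subtracting lower-degree contributions places $e_1,e_2,\dots,e_a$ in $N$ in turn. Thus $N = F_a := \operatorname{span}\{e_0,\dots,e_a\}$, and the submodules of $S^{p-1}(V^\#)$ form the chain $0\subset F_0 \subset F_1 \subset \cdots \subset F_{p-1}$.

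For the rank variety, the $\pi$-point at $(\gamma_1,\dots,\gamma_r,\alpha_1,\dots,\alpha_s)\in\bbA^{r+s}(k)$ corresponds to the flat map $k[t]/(t^p)\to kH$ sending $t\mapsto x := \sum_i\gamma_is_i + \sum_j\alpha_jt_j$. By step (i), the effective operator is $\gamma_1s_1+\sum_j\alpha_jt_j$; combining the formulas there,
\[ x\,e_a \;=\; ac\,e_{a-1} \;+\; \sum_{k=2}^{a}\binom{a}{k}(-1)^k\Bigl(\textstyle\sum_j\alpha_j\mu_j^k\Bigr)\,e_{a-k}, \qquad c := \gamma_1 - \textstyle\sum_j\alpha_j\mu_j. \]
Since $\dim S^{p-1}(V^\#) = p$, the module is free over $k[t]/(t^p)$ via $x$ iff $x^{p-1}e_{p-1} \ne 0$. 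Expanding $x^{p-1}e_{p-1}$ as a sum over jump sequences $(k_1,\dots,k_{p-1})$ with $k_i\ge 1$ and $\sum k_i = p-1$ forces every $k_i=1$, so only the leading $ac$-term contributes. This yields $x^{p-1}e_{p-1} = (p-1)!\,c^{p-1}e_0$, which is nonzero iff $c\ne 0$. Therefore the rank variety equals the hyperplane $\{c=0\}=\{-\gamma_1+\sum_j\alpha_j\mu_j=0\}$.

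The main obstacle is the jump-counting argument in step (iii): one must be sure that the non-leading corrections to $x\,e_a$ cannot conspire to contribute to the coefficient of $e_0$ in $x^{p-1}e_{p-1}$. The key observation is that each such correction lowers $X$-degree by at least $2$, and $p-1$ steps with every drop $\ge 1$ summing to $p-1$ force all drops to be exactly $1$. This same bookkeeping handles the case $c=0$ transparently: all available jumps are then of size $\ge 2$, so in fact $x^{(p+1)/2}$ already annihilates $e_{p-1}$ and $S^{p-1}(V^\#)$ is not free via this $\pi$-point.
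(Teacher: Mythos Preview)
Your proof is correct and follows essentially the same line as the paper's: compute the action of the generators on the monomial basis, then evaluate the $(p-1)$st power of a generic shifted-subgroup element on $X^{p-1}$ to read off the rank variety. Two points of comparison are worth noting. First, for uniseriality the paper argues more slickly: once you know there is \emph{some} shifted subgroup on which the restriction is free of rank one, the Loewy length of $S^{p-1}(V^\#)$ over $kH$ is at least $p$, and since $\dim S^{p-1}(V^\#)=p$ every radical layer is one-dimensional, so the module is uniserial. Your direct enumeration of the submodule lattice is equally valid and more elementary. Second, your jump-sequence bookkeeping makes explicit a point the paper sweeps into ``continuing this way'': the lower-order terms in $x\,e_a$ drop the $X$-degree by at least $2$, so they cannot contribute to the coefficient of $e_0$ in $x^{p-1}e_{p-1}$ by pigeonhole. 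Your justification that $s_i$ ($i\ge 2$) acts trivially is a bit compressed---the induction has to run simultaneously on $i$ and on the $X$-degree $a$, and for the higher $s_i$ the Witt-polynomial corrections involve $s_2,\dots,s_{i-1}$ as well as powers of $s_1$---but the paper simply asserts the vanishing without proof, so you are if anything being more careful.
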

\begin{proof}
We have
\begin{align*}
s_1(X^i)&=iX^{i-1}Y \\
s_j(X^i)&= 0 & 2\le j \le r \\
(g_j-1)(X^i)&=(X-\mu_j Y)^i-X^i =-i\mu_jX^{i-1}Y+\cdots & 1\le j \le s 
\end{align*}
and so if $(\gamma_1,\dots,\gamma_r,\alpha_1,\dots,\alpha_s)\in \bbA^{r+s}(k) \setminus \{0\}$
then
\begin{multline*} 
(\gamma_1s_1+\dots+\gamma_rs_r+\alpha_1(g_1-1)+\dots
+\alpha_s(g_s-1))(X^{p-1})\\
=(-\gamma_1+\alpha_1\mu_1+\dots+\alpha_s\mu_s)X^{p-2}Y + \cdots 
\end{multline*}
Continuing this way, we have
\begin{multline*} 
(\gamma_1s_1+\dots+\gamma_rs_r+\alpha_1(g_1-1)+\dots +
\alpha_s(g_s-1))^i(X^{p-1})\\
=i!(-\gamma_1+\alpha_1\mu_1+\dots+\alpha_s\mu_s)^i X^{p-1-i}Y^i +
\cdots 
\end{multline*}
and finally
\begin{multline*} 
(\gamma_1s_1+\dots+\gamma_rs_r+\alpha_1(g_1-1)+\dots +
\alpha_s(g_s-1))^{p-1}(X^{p-1})\\
=-(-\gamma_1+\alpha_1\mu_1+\dots+\alpha_s\mu_s)^{p-1} Y^{p-1}. 
\end{multline*}
So the restriction to the shifted subgroup defined by
$(\gamma_1,\dots,\gamma_r,\alpha_1,\dots,\alpha_s)$ is projective if and only if 
$-\gamma_1+\alpha_1\mu_1+\dots+\alpha_s\mu_s\ne 0$.

Since there is a non-trivial shifted subgroup such that the
restriction is projective, it follows that the module is uniserial.
\end{proof}

\begin{lemma}\label{le:Steinberg}
For $1\le i \le r+s$ the $kH$-module $S^{p^i-1}(V^\#)$ is isomorphic to
the tensor product of Frobenius twists
\[ S^{p-1}(V^\#) \otimes S^{p-1}(V^\#)^{(1)} \otimes \dots \otimes
  S^{p-1}(V^\#)^{(i-1)}. \]
\end{lemma}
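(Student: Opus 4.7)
The plan is to reduce to the classical Steinberg tensor product theorem for $\mathrm{SL}_2$. Since $H$ acts on $V=ku\oplus kv$ fixing $v$ and sending $u$ to $u$ plus multiples of $v$, the associated representation $\rho\colon H\to\mathrm{GL}(V)$ factors through the upper-triangular unipotent subgroup $\bbG_a\subset \mathrm{SL}_2$. Consequently $V$ acquires a canonical $\mathrm{SL}_2$-module structure whose restriction along $\rho$ recovers the $kH$-module used throughout Theorem~\ref{th:sympowers}, and the same holds for every symmetric power $S^n(V^\#)$.

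For $\mathrm{SL}_2$, the Steinberg tensor product theorem decomposes any simple module $L(n)$ with base-$p$ expansion $n=\sum_j a_jp^j$ as $L(n)\cong\bigotimes_j L(a_j)^{(j)}$, and $L(a)=S^a(V^\#)$ for $0\le a\le p-1$. Specializing to $n=p^i-1$, every digit equals $p-1$, yielding
$$L(p^i-1)\cong S^{p-1}(V^\#)\otimes S^{p-1}(V^\#)^{(1)}\otimes\cdots\otimes S^{p-1}(V^\#)^{(i-1)}$$
as $\mathrm{SL}_2$-modules, with total dimension $p^i$. On the other hand $S^{p^i-1}(V^\#)$ is the dual Weyl module $\nabla(p^i-1)$, also of dimension $p^i$; since $L(p^i-1)$ is its unique irreducible quotient and the dimensions agree, the canonical surjection $\nabla(p^i-1)\twoheadrightarrow L(p^i-1)$ is an isomorphism. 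So the displayed decomposition already holds for $S^{p^i-1}(V^\#)$ as an $\mathrm{SL}_2$-module.

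Restricting along $\rho$ now yields the desired $kH$-isomorphism, provided restriction commutes with Frobenius twist. This is formal: both the $\mathrm{SL}_2$- and $H$-Frobenius twists are defined by base change along the absolute Frobenius of $\Spec k$, which is natural in arbitrary morphisms of affine $k$-group schemes, so $(M^{(j)})|_H\cong(M|_H)^{(j)}$ automatically for every $\mathrm{SL}_2$-module $M$ and every $j$. The main obstacle is precisely this naturality of Frobenius twist under pullback: once verified, the remaining inputs are the classical Steinberg decomposition together with the observation $\dim\nabla(p^i-1)=p^i=\dim L(p^i-1)$, which forces $S^{p^i-1}(V^\#)$ to already be irreducible over $\mathrm{SL}_2$.
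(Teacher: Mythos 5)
Your proof is correct, but it takes a genuinely different (and heavier) route than the paper's. You reduce to $\mathrm{SL}_2$ by factoring the action through the unipotent root subgroup $\bbG_a\subset\mathrm{SL}_2$, identify $S^{p^i-1}(V^\#)$ with the Steinberg module $L(p^i-1)$ by a dimension count (fine, although $L(\lambda)$ is the simple \emph{socle} of $\nabla(\lambda)$, not its unique irreducible quotient --- that is $\Delta(\lambda)$; the count works either way), apply the classical Steinberg tensor product theorem, and restrict. This is precisely the ``restrict from $\mathrm{SL}_{2(r)}$ and $\mathrm{SL}(2,p^s)$'' route that the introduction cites as the classical source of these facts. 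The paper's own proof of Lemma~\ref{le:Steinberg} is instead a two-line elementary observation: it \emph{realizes} $S^{p-1}(V^\#)^{(j)}$ as the span of the $p^j$-th powers inside $S^{(p-1)p^j}(V^\#)$ and notes that multiplication in $k[X,Y]$ --- automatically $H$-equivariant, since $H$ acts by algebra automorphisms --- carries the monomial basis of the tensor product bijectively onto the monomials $X^aY^{p^i-1-a}$, by uniqueness of base-$p$ expansions. The elementary proof buys self-containedness and, more importantly, the concrete model of the twist that is used immediately afterwards in Theorem~\ref{th:rank-variety}; to splice your argument in, you would still need one sentence identifying your abstractly defined $(S^{p-1}(V^\#)|_H)^{(j)}$ with that span of $p^j$-th powers (the $(\bbZ/p)^s$-twist replaces $\mu_l$ by $\mu_l^{p^j}$ and the $\bbG_{a(r)}$-twist kills $s_1,\dots,s_j$, matching the action on $X^{p^j}$ and $Y^{p^j}$). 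Your route buys conceptual clarity and generality. The one point you rightly flag --- compatibility of Frobenius twist with restriction along the embedding $H\to\mathrm{SL}_2$, which is \emph{not} defined over $\bbF_p$ since it involves the $\mu_l$ --- does hold as you assert: the relative Frobenius satisfies $F_{G/k}\circ\phi=\phi^{(1)}\circ F_{H/k}$, and the Galois twist built into the definition of $M^{(1)}$ absorbs the discrepancy between $\phi$ and $\phi^{(1)}$, so no gap results.
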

\begin{proof}
We regard $S^{p-1}(V^\#)^{(j)}$ as the linear span of the $p^j$th
powers of the elements of $S^{p-1}(V^\#)$. Examining monomials, it is
apparent that multiplication provides the required isomorphism from
the tensor product to $S^{p^i-1}(V^\#)$.
\end{proof}

\begin{theorem}\label{th:rank-variety}
For $1\le i \le r+s$ the rank variety of the module $S^{p^i-1}(V^\#)$
is the linear subspace of $\bbA^{r+s}$ defined by the first $i$ rows
of the $(r+s) \times (r+s)$ matrix
\[ \renewcommand{\arraystretch}{1.2}
\left(\begin{array}{rrcr|ccc} -1 & 0 & \cdots & 0 & \mu_1 & \dots & \mu_s \\
0 & -1 & & 0 & \mu_1^p &  & \mu_s^p \\
0 & 0 & & 0 & \mu_1^{p^2} & & \mu_s^{p^2} \\ 
\vdots\, & & & \vdots\, & \vdots & & \vdots \\
0 & 0 & & -1 & \mu_1^{p^{r-1}} & \cdots & \mu_s^{p^{r-1}} \\ \hline
0 & 0 & & 0 & \mu_1^{p^r} & & \mu_s^{p^r} \\
\vdots\, & & & \vdots\, & \vdots & & \vdots \\
0 & 0 & \cdots & 0 & \mu_1^{p^{r+s-1}}\! & & \mu_s^{p^{r+s-1}}\!
\end{array}\right) \]
The rows of this matrix are linearly independent, so the complexity of
$S^{p^i-1}(V)$ is $r+s-i$.
\end{theorem}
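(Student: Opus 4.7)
The plan is to leverage the Steinberg-type decomposition of Lemma \ref{le:Steinberg},
\[ S^{p^i-1}(V^\#) \cong S^{p-1}(V^\#) \otimes S^{p-1}(V^\#)^{(1)} \otimes \cdots \otimes S^{p-1}(V^\#)^{(i-1)}, \]
together with the standard fact that the rank variety of a tensor product of $kH$-modules is the intersection of the rank varieties of the factors. This reduces the theorem to computing the rank variety of each $S^{p-1}(V^\#)^{(j)}$ and recognising it as the hyperplane defined by the $(j+1)$-th row of the displayed matrix.

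To identify the twisted module, I would describe $S^{p-1}(V^\#)^{(j)}$ concretely as the $kH$-submodule of $S^{(p-1)p^j}(V^\#)$ spanned by $U^{p-1}, U^{p-2}W, \ldots, W^{p-1}$, where $U = X^{p^j}$ and $W = Y^{p^j}$. The coaction $\rho(X) = t\otimes Y + 1\otimes X$ from the proof of Lemma \ref{le:Gar}, combined with the duality pairing $\langle s_m, t^k\rangle = \delta_{k,p^{m-1}}$, yields $s_m(X^n) = \binom{n}{p^{m-1}} X^{n-p^{m-1}} Y^{p^{m-1}}$. Applying Lucas's theorem with $n = (p-1-k)p^j$ shows that on $S^{p-1}(V^\#)^{(j)}$ the operator $s_m$ vanishes for $m \ne j+1$, while $s_{j+1}$ sends $U$ to $W$. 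The characteristic-$p$ identity $(X - \mu_\ell Y)^{p^j} = X^{p^j} - \mu_\ell^{p^j} Y^{p^j}$ gives $(g_\ell - 1)\cdot U = -\mu_\ell^{p^j} W$. In particular $S^{p-1}(V^\#)^{(j)}$ is genuinely an $H$-submodule, and as an abstract $kH$-module it looks exactly like $S^{p-1}$ of a two-dimensional representation in which $s_{j+1}$ plays the role of $s_1$ and each $\mu_\ell$ is replaced by $\mu_\ell^{p^j}$ (with no $s$-contribution at all once $j \ge r$).

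Repeating the computation from the proof of Lemma \ref{le:hyperplane} verbatim with this twisted data shows that the shifted subgroup indexed by $(\gamma_1, \ldots, \gamma_r, \alpha_1, \ldots, \alpha_s)$ acts projectively on $S^{p-1}(V^\#)^{(j)}$ if and only if
\[ -\gamma_{j+1} + \alpha_1 \mu_1^{p^j} + \cdots + \alpha_s \mu_s^{p^j} \ne 0, \]
with the convention that the $\gamma_{j+1}$ term is absent for $j \ge r$. This is precisely the non-vanishing of the $(j+1)$-th row of the matrix, and intersecting over $0 \le j \le i-1$ yields the claimed description of the rank variety.

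For the linear independence assertion, the full matrix has block form $\left(\begin{smallmatrix} -I_r & * \\ 0 & M \end{smallmatrix}\right)$, where $M$ is the $s\times s$ matrix with $(j,\ell)$-entry $\mu_\ell^{p^{r+j-1}}$. Since $M$ is the entrywise $p^r$-th Frobenius of the classical Moore matrix $M_0 = (\mu_\ell^{p^{j-1}})_{j,\ell=1,\ldots,s}$, one has $\det M = (\det M_0)^{p^r}$, which is nonzero by Moore's theorem because the faithfulness hypothesis on $H$ forces $\mu_1, \ldots, \mu_s$ to be $\bbF_p$-linearly independent. Hence all $r+s$ rows are linearly independent, the rank variety of $S^{p^i-1}(V^\#)$ has codimension $i$ in $\bbA^{r+s}$, and its complexity is $r+s-i$. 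The main point requiring care is the Frobenius-twist step — verifying that the span of $p^j$-th powers really is an $H$-submodule and extracting its module structure via Lucas — but once this is done everything else reduces to Lemma \ref{le:hyperplane} and classical invariant theory.
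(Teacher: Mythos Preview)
Your proof is correct and follows essentially the same route as the paper: both apply the Steinberg decomposition of Lemma~\ref{le:Steinberg}, use that the rank variety of a tensor product is the intersection of the rank varieties of the factors, identify the rank variety of each Frobenius twist $S^{p-1}(V^\#)^{(j)}$ as the hyperplane given by the $(j+1)$-th row, and then invoke the Moore/Vandermonde determinant for linear independence. Where the paper simply asserts that the rank variety of $S^{p-1}(V^\#)^{(j)}$ follows from Lemma~\ref{le:hyperplane}, you supply the Lucas-theorem computation showing exactly how the $s_m$ and $g_\ell-1$ act on the span of $p^j$-th powers, which is a useful amplification rather than a different argument.
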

\begin{proof}
It follows from Lemma \ref{le:hyperplane} that the rank variety of
$S^{p-1}(V)^{(i)}$ is the hyperplane given by the vanishing of 
the $i$th row of the above matrix. Now
apply Lemma \ref{le:Steinberg}.

Now by the usual Vandermonde argument, given elements
$a_1,\dots,a_s\in k$, the determinant of the matrix
\[ \begin{pmatrix} a_1 & \dots & a_s \\
a_1^p &  & a_s^p \\
\vdots & & \vdots \\
a_1^{p^{s-1}} & & a_s^{p^{s-1}}
\end{pmatrix} \]
is, up to non-zero scalar, the product of the non-zero 
$\bbF_p$-linear combinations of $a_1,\dots,a_s$, 
one from each one dimensional subspace. It therefore vanishes if and
only if they are linearly dependent over $\bbF_p$.

Applying this to the lower right corner of the matrix in the theorem, the 
linear independence of the rows of this matrix follows
using the fact that the $\mu_i$ are linearly
independent over $\bbF_p$. Alternatively, this can be deduced from Theorem~\ref{th:sympowers-low}.
\end{proof}

\begin{proposition}\label{pr:induced}
Let $M$ be a $p$-dimensional uniserial $kH$-module. Then there is a
subalgebra $A$ of $kH$ of dimension $p^{r+s-1}$ with the following
properties:
\begin{enumerate}
\item[\rm (i)] $kH$ is flat as an $A$-module,
\item[\rm (ii)] the restriction of $M$ to $A$ is a direct sum of $p$
  copies of $k$ with trivial action, and
\item[\rm (iii)] $M$ is isomorphic to $kH\otimes_{A} k$ as a $kH$-module.
\end{enumerate}
\end{proposition}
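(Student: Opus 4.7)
The strategy is to perform a change of coordinates in $kH$, producing new algebra generators $\pi_1, \ldots, \pi_n$ (with $n = r+s$) such that $\pi_2, \ldots, \pi_n$ annihilate $M$ and $\pi_1$ realises the uniserial action; the subalgebra $A$ will then be the one generated by the former.

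As described in Section~\ref{se:semidirect}, $kH$ is commutative and isomorphic to the truncated polynomial ring $k[y_1, \ldots, y_n]/(y_1^p, \ldots, y_n^p)$, with maximal ideal the augmentation $J$. The key underlying fact I would use is that since $kH$ is commutative of characteristic $p$, the Frobenius $f\mapsto f^p$ is a ring endomorphism; a direct monomial computation then shows $f^p = 0$ for every $f \in J$. Therefore any $n$-tuple $\pi_1, \ldots, \pi_n \in J$ whose images span $J/J^2$ determines, via $T_i \mapsto \pi_i$, a well-defined algebra map $k[T_1, \ldots, T_n]/(T_i^p) \to kH$, which is an isomorphism (surjective by Nakayama's lemma for local rings, and both sides have dimension $p^n$).

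Now $M$ is cyclic, since any element outside $\Rad(M)$ generates it, so $M \cong kH/I$ where $I = \Ann_{kH}(M)\subseteq J$. Uniseriality of the $p$-dimensional local ring $kH/I$ forces $\dim_k J/(J^2 + I) = 1$, so $(I + J^2)/J^2$ has codimension one in $J/J^2$. I would choose $\pi_2, \ldots, \pi_n \in I$ whose images form a basis of $(I + J^2)/J^2$, and take any $\pi_1 \in J \setminus (I + J^2)$; by the previous paragraph this gives $kH \cong k[\pi_1,\ldots,\pi_n]/(\pi_i^p)$. Setting $A := k[\pi_2, \ldots, \pi_n]/(\pi_i^p)\subset kH$ then produces a subalgebra of dimension $p^{r+s-1}$.

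The three properties follow by bookkeeping. The coordinate-change isomorphism factors as $kH \cong A \otimes_k k[\pi_1]/(\pi_1^p)$, so $kH$ is free of rank $p$ over $A$, giving (i). Since each $\pi_i$ for $i \geq 2$ lies in $I$, the augmentation ideal $A^+$ annihilates $M$, so $M|_A$ is a trivial $A$-module of dimension $p$, giving (ii). Finally, $A^+ \cdot kH \subseteq I$, and the dimension count
\begin{equation*}
\dim_k(A^+\cdot kH) = (p^{r+s-1}-1)\cdot p = p^{r+s}-p = \dim_k I
\end{equation*}
forces equality, whence $M \cong kH/I = kH/(A^+\cdot kH) = kH\otimes_A k$, giving (iii). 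I do not anticipate any serious obstacle: the content of the proof is concentrated in the change-of-variables step, which rests on the fact that Frobenius is a ring endomorphism in the commutative characteristic $p$ setting, allowing arbitrary elements of $J$ to serve as new generators without violating the $p$-th power relations.
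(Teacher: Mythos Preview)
Your proof is correct and follows the same construction as the paper: both identify $I=\Ann(M)$, observe that $(I+J^2)/J^2$ has codimension one in $J/J^2$, and take $A$ to be the subalgebra generated by $r+s-1$ elements of $I$ spanning that subspace. Your justification of the change of variables is more explicit than the paper's --- you invoke Frobenius to see that every element of $J$ has $p$th power zero, so the map $k[T_i]/(T_i^p)\to kH$ is well defined and hence an isomorphism by dimension count; the paper simply asserts that $kH$ is flat over $A$.

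The only substantive difference is in part~(iii). The paper computes $\dim_k\Hom_{kH}(kH\otimes_A k,\,M)=p$ and $\dim_k\Hom_{kH}(kH\otimes_A k,\,\Rad M)=p-1$, extracts a map not landing in the radical, and then uses that both modules are uniserial of length $p$ to conclude it is an isomorphism. Your route is more direct: the containment $A^{+}\cdot kH\subseteq I$ together with the dimension count $\dim_k(A^{+}\cdot kH)=(p^{r+s-1}-1)p=p^{r+s}-p=\dim_k I$ forces equality, giving $kH\otimes_A k=kH/I\cong M$ at once. This avoids the Hom computation and the appeal to uniseriality of $kH\otimes_A k$, at the cost of relying on the explicit tensor decomposition $kH\cong A\otimes_k k[\pi_1]/(\pi_1^p)$ that you set up earlier.
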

\begin{proof}
Let $I\subseteq kH$ be the annihilator of $M$. Then $I$ is an ideal of
codimension $p$, and $M$ is isomorphic to $kH/I$. Furthermore, for
$n\ge 0$ we have
$\Rad^n(M) = J^n(kH).M$, and so $M/\Rad^n(M) \cong kH/(I+J^n(kH))$.
Since $M/\Rad^2(M)$ has dimension two, so does $kH/(I+J^2(kH))$, and
therefore $(I+J^2(kH))/J^2(kH)$ has dimension $r+s-1$. As a vector
space, this is isomorphic to $I/(I\cap J^2(kH))$. 
Choose elements $u_1,\dots,u_{r+s-1}\in I$ which are linearly
independent modulo $J^2(kH)$, and let 
$A=k[u_1,\dots,u_{r+s-1}]\subseteq kH$. Then $kH$ is flat as an
$A$-module, and $A$ acts trivially on $M$. So we have
$\dim_k\Hom_{A}(k,M)=p$, and therefore
\[ \dim_k\Hom_{kH}(kH\otimes_{A}k, M)=p. \] 
Similarly, we have
\[ \dim_k\Hom_{kH}(kH\otimes_{A}k,\Rad(M))=p-1. \]
There is therefore a homomorphism from $kH\otimes_{A}k$ to $M$ whose
image does not lie in $\Rad(M)$. Both modules are uniserial of length
$p$, so such a homomorphism is necessarily an isomorphism.
\end{proof}

\begin{theorem}\label{th:annihilators}
\begin{enumerate}
\item[\rm (i)]
There exists a flat embedding $A\to kH$ of a
subalgebra $A$ of dimension $p^{r+s-1}$ and an isomorphism 
$S^{p-1}(V^\#) \cong kH \otimes_{A} k$. 
\item[\rm (ii)]
The cohomology $H^*(kH,S^{p-1}(V^\#))$ is annihilated by
\[ -x_1+\mu_1^pz_1+\dots+\mu_r^pz_r. \]
\item[\rm (iii)]
More generally, for $1\le i\le r+s$, there exists a flat embedding
$A_i \to kH$ of a subalgebra $A_i$ of dimension $p^{r+s-i}$ and an
isomorphism $S^{p^i-1}(V^\#) \cong kH \otimes_{A_i} k$. 
The cohomology $H^*(kH,S^{p^i-1}(V^\#))$ is
annihilated by the first $i$ elements of the regular sequence
\begin{align*}
-x_1\qquad\ \qquad\ \qquad\ &+\mu_1^pz_1+\dots+\mu_r^pz_r \\
-x_2\qquad\ \qquad\ &+\mu_1^{p^2}z_1+ \dots + \mu_r^{p^2}z_r \\
\ddots \qquad\ &\qquad\cdots \\
-x_r&+\mu_1^{p^r}z_1 + \dots + \mu_r^{p^r}z_r \\
&\phantom{{}+{}}\mu_1^{p^{r+1}}z_1+\dots + \mu_r^{p^{r+1}}z_r \\
&\qquad\cdots \\
&\phantom{{}+{}}\mu_1^{p^{r+s}}z_1+\dots+\mu_r^{p^{r+s}}z_r.
\end{align*}
\end{enumerate}
\end{theorem}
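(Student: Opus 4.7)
The plan is to tackle the three parts in sequence, reducing (iii) to the uniserial case of (i)--(ii) via the Steinberg decomposition.

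For (i), $S^{p-1}(V^\#)$ is a $p$-dimensional uniserial $kH$-module --- uniseriality was already verified in the proof of Lemma~\ref{le:hyperplane} by exhibiting a shifted subgroup on which the restriction is projective --- so Proposition~\ref{pr:induced} will apply directly and deliver the required flat embedding $A\hookrightarrow kH$ of dimension $p^{r+s-1}$ together with the isomorphism $S^{p-1}(V^\#)\cong kH\otimes_{A}k$.

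For (ii), I will combine (i) with Eckmann--Shapiro, which applies since $kH$ is flat over $A$, to identify
\[ H^*(kH,S^{p-1}(V^\#)) \cong H^*(A,k) \]
as modules over $H^*(kH,k)$, where the right-hand side gets its module structure via the restriction ring map $\res\colon H^*(kH,k)\to H^*(A,k)$. The annihilator is then exactly $\ker(\res)$. To place the claimed element in this kernel, I will use the explicit generators of $A$ supplied by the proof of Proposition~\ref{pr:induced}: namely $s_2,\dots,s_r$, which annihilate $S^{p-1}(V^\#)$ on the nose, together with $s$ further lifts in the annihilator of a basis of the hyperplane $\{-\gamma+\sum_j\mu_j\alpha_j=0\}\subset\operatorname{span}(s_1,t_1,\dots,t_s)$ extracted from Lemma~\ref{le:hyperplane}. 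Computing $\res(x_1)$ and $\res(z_j)$ in degree two in terms of these generators should produce the relation $\res(x_1)=\sum_{j=1}^s\mu_j^p\,\res(z_j)$, where the Frobenius exponent on the $\mu_j$ reflects the familiar Frobenius-twisted identification of the rank variety with $\Spec H^{\mathrm{ev}}_{\mathrm{red}}(kH,k)$ at odd primes.

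For (iii), I will use the Steinberg decomposition of Lemma~\ref{le:Steinberg},
\[ S^{p^i-1}(V^\#) \cong S^{p-1}(V^\#)\otimes S^{p-1}(V^\#)^{(1)}\otimes\cdots\otimes S^{p-1}(V^\#)^{(i-1)}, \]
to reduce to the uniserial case. Each Frobenius twist $S^{p-1}(V^\#)^{(j)}$ is again a $p$-dimensional uniserial $kH$-module whose rank variety is the hyperplane given by the $(j+1)$-st row of the matrix in Theorem~\ref{th:rank-variety}. Applying parts (i) and (ii) to each factor produces flat subalgebras $B_{j+1}\hookrightarrow kH$ of dimension $p^{r+s-1}$ and identifies the $(j+1)$-st element of the displayed sequence as an annihilator of that factor's cohomology. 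Since the action of this element on $H^*(kH,S^{p^i-1}(V^\#))$ factors through its action on the corresponding tensor factor via the Eckmann--Shapiro isomorphism for $B_{j+1}$, it annihilates the cohomology of the whole tensor product, yielding the claimed simultaneous annihilation. The flat embedding $A_i\hookrightarrow kH$ of dimension $p^{r+s-i}$ with $S^{p^i-1}(V^\#)\cong kH\otimes_{A_i}k$ will be constructed by a codimension-$i$ analog of Proposition~\ref{pr:induced}: by Theorem~\ref{th:rank-variety}, the annihilator of $S^{p^i-1}(V^\#)$ meets $J$ in a subspace surjecting onto an $(r+s-i)$-dimensional subspace of $J/J^2$, which provides the required generators of $A_i$.

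The hard part will be two-fold: first, upgrading annihilation modulo nilpotents --- the natural output of the support-variety identification --- to literal annihilation in cohomology; second, extending Proposition~\ref{pr:induced} past its stated $p$-dimensional uniserial setting to the $p^i$-dimensional, non-uniserial module $S^{p^i-1}(V^\#)$. Both should be handled by arguing concretely with subalgebras and by tracking the action of each cohomology class through the Eckmann--Shapiro identification for the appropriate Steinberg tensor factor.
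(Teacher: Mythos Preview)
Your approach aligns with the paper's: (i) is exactly Proposition~\ref{pr:induced} applied to the uniserial module from Lemma~\ref{le:hyperplane}; (ii) is Eckmann--Shapiro identifying the annihilator with $\ker(\res\colon H^*(kH,k)\to H^*(A,k))$, together with the Frobenius twist coming from $\beta\cP^0$; and (iii) the paper says ``in the same way, using Lemma~\ref{le:Steinberg} and Theorem~\ref{th:rank-variety}''. Two refinements are worth making.

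First, your worry about ``upgrading from annihilation modulo nilpotents to literal annihilation'' is misplaced. Eckmann--Shapiro gives an honest isomorphism of $H^*(kH,k)$-modules $H^*(kH,kH\otimes_A k)\cong H^*(A,k)$, so the annihilator is \emph{exactly} $\ker(\res)$; no passage to reduced rings occurs. The rank variety only tells you which hyperplane $A$ sits over; the annihilating degree-two element is then $\beta\cP^0$ of the degree-one generator of $\ker(\res)$, with semilinearity of $\beta\cP^0$ supplying the exponents $\mu_j^p$. This is precisely the paper's argument.

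Second, your tensor-factor route to the annihilation in (iii) has a gap, and is in any case unnecessary. The step ``the action factors through the Eckmann--Shapiro isomorphism for $B_{j+1}$'' tacitly uses the tensor identity $(kH\otimes_{B_{j+1}}k)\otimes N\cong kH\otimes_{B_{j+1}}(N\vert_{B_{j+1}})$, which requires $B_{j+1}$ to be a coideal subalgebra; the subalgebras produced by Proposition~\ref{pr:induced} need not be (they may contain $s_2$ but not $s_1$, while $\Delta(s_2)$ involves $s_1$). Fortunately you do not need this: once $A_i$ is in hand with $S^{p^i-1}(V^\#)\cong kH\otimes_{A_i}k$, Eckmann--Shapiro directly gives the annihilator as $\ker\bigl(\res\colon H^*(kH,k)\to H^*(A_i,k)\bigr)$, which contains the first $i$ listed elements by Theorem~\ref{th:rank-variety}. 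For the construction of $A_i$, note that Proposition~\ref{pr:induced} invokes uniseriality only in its final line; for $i\ge 2$ the module $S^{p^i-1}(V^\#)$ is not uniserial, but it is \emph{cyclic} (it is self-dual with one-dimensional socle, hence has one-dimensional top), so $S^{p^i-1}(V^\#)\cong kH/I$. Choosing $u_1,\dots,u_{r+s-i}\in I$ linearly independent modulo $J^2$ and setting $A_i=k[u_1,\dots,u_{r+s-i}]$, one gets a surjection $kH\otimes_{A_i}k=kH/(u_1,\dots,u_{r+s-i})kH\twoheadrightarrow kH/I$ between modules of the same dimension $p^i$, hence an isomorphism.
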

\begin{proof}
(i) This follows from Lemma \ref{le:hyperplane} and Proposition
\ref{pr:induced}.

(ii) The annihilator of
cohomology consists of the elements of cohomology of $H$ whose
restriction to $A$ is zero, and is therefore generated by a degree one
element and its image under $\beta\cP^0$.
Taking into account the Frobenius twist in the
relationship between rank variety and cohomology variety for an
elementary abelian $p$-group, the
statement follows from Lemma \ref{le:hyperplane}.

(iii) This follows in the same way, using Lemma \ref{le:Steinberg} and
Theorem \ref{th:rank-variety}.
\end{proof}

%%%%%%%%%%%%%%%%%%%Main Theorem%%%%%%%%%%%%%%%%%%%%%%%%%%%%%
%%%%%%%%%%%%%%%%%%%%%%%%%%%%%%%%%%%%%%%%%%%%%%%%%%%%%%%
\section{Proof of the main theorem}\label{se:proof}

In this section, we prove Theorem \ref{th:main}, using the results of
the previous sections. 
\begin{theorem}\label{th:main}
Let $G$ be the semidirect product \[ (\bbG_a^-\times \bbG_a^-) \rtimes H \]
where $H= \bbG_{a(r)} \times (\bbZ/p)^s$ acts faithfully.
Then there is a non-zero element $\zeta\in
H^{1,1}(G,k)$ such that for all $u\in H^{1,0}(G,k)$ we have $\beta\cP^0(u).\zeta^{p^{r+s-1}(p-1)}=0$.
\end{theorem}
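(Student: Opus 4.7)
\medskip
\noindent\textbf{Proof plan.} The plan is to run the Lyndon--Hochschild--Serre spectral sequence for the extension
\[ 1 \to \bbG_a^- \times \bbG_a^- \to G \to H \to 1, \]
which has the form
\[ E_2^{i,j} = H^i(H, H^{j,*}(\bbG_a^-\times\bbG_a^-, k)) = H^i(H, S^j(V^\#)) \Rightarrow H^{i+j,*}(G,k), \]
where $V=ku\oplus kv$ and $S^j(V^\#)=k[X,Y]_j$ with $X,Y$ dual to $u,v$ (so that $Y$ has internal degree $1$ and is $H$-fixed by choice of basis). I would define $\zeta\in H^{1,1}(G,k)$ to be the lift of the permanent cycle $Y\in (S^1(V^\#))^H=E_2^{0,1}$; permanence is automatic since $Y$ has fiber degree $1$ and non-vanishing of $\zeta$ is visible on the edge map.

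\medskip
Next I would reduce the problem to the vanishing of a finite list of specific products. Since $H^{j,0}(\bbG_a^-\times\bbG_a^-,k)=0$ for $j\geq 1$ (the cohomology of $\bbG_a^-\times\bbG_a^-$ sits in equal cohomological and internal degrees), the edge map gives an isomorphism $H^{1,0}(G,k)\cong H^{1,0}(H,k)$, which has $k$-basis $\lambda_1$ (when $r\geq1$) together with $y_1,\dots,y_s$. By additivity and $k$-semi-linearity of $\beta\cP^0$, together with the Steenrod computations $\beta\cP^0(\lambda_1)=-x_1$ and $\beta\cP^0(y_i)=z_i$ from Section~\ref{se:Steenrod}, the required relation becomes equivalent to the two families of identities $x_1\zeta^n=0$ and $z_i\zeta^n=0$ in $H^{*,*}(G,k)$ for $n=p^{r+s-1}(p-1)$.

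\medskip
To prove these identities I would exploit the annihilator computation of Theorem~\ref{th:annihilators}(iii). At $E_2$, the class $x_1\zeta^n$ (respectively $z_i\zeta^n$) is represented by the image of $x_1$ (respectively $z_i$) under the map $H^2(H,k)\to H^2(H,S^n(V^\#))$ induced by multiplication $k\xrightarrow{Y^n} S^n(V^\#)$. The crucial observation is that $Y^n=(Y^{p-1})^{p^{r+s-1}}$ sits in the image of the $(r+s-1)$-fold Frobenius applied to the socle generator of $S^{p-1}(V^\#)$, and via the Steinberg tensor product decomposition of Lemma~\ref{le:Steinberg} this identifies $Y^n$ with a class whose cohomology annihilator contains the entire regular sequence of Theorem~\ref{th:annihilators}. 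Since the first $r+s$ members of this regular sequence are precisely $-x_{k+1}+\sum_i\mu_i^{p^{k+1}}z_i$ for $0\leq k<r$ and $\sum_i\mu_i^{p^{k+1}}z_i$ for $r\leq k\leq r+s-1$, linear combinations with the relations $\zeta u_0=0$, $\zeta^p u_1=0,\ldots,\zeta^{p^{r+s-1}}u_{r+s-1}=0$ (which come from $d_2(\eta)=\zeta u_0$ in the companion spectral sequence $1\to\bbG_a^-\to G\to \bbG_a^-\times H\to 1$ together with iterated application of $\cP^{1/2}$) will give enough independent relations to isolate $x_1\zeta^n=0$ and each $z_i\zeta^n=0$.

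\medskip
The main obstacle I anticipate is the bookkeeping step: passing from the annihilator statement of Theorem~\ref{th:annihilators}, which is for $S^{p^i-1}(V^\#)$, to a statement about $S^n(V^\#)$ with $n=p^{r+s-1}(p-1)=p^{r+s}-p^{r+s-1}$, which is not of the form $p^i-1$. Carrying out this identification rigorously requires a careful combination of the Frobenius factorization $Y^n=(Y^{p-1})^{p^{r+s-1}}$ with the Steinberg tensor product theorem and with differentials in the spectral sequence, precisely paralleling the role played by Corollary~\ref{co:calculation} (the $d_p(\lambda\eta^{p-1})$ computation) in the base case $H=\bbG_{a(1)}$.
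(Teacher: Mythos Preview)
Your proposal assembles the right ingredients—both spectral sequences, Theorem~\ref{th:annihilators}, and Corollary~\ref{co:calculation}—but there is a genuine gap, and your acknowledged obstacle is exactly where it lies.

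First, a minor correction: $H^{1,0}(H,k)$ has basis $\lambda_1,\dots,\lambda_r,y_1,\dots,y_s$, not just $\lambda_1$ and the $y_i$, so your reduction must also cover $x_2\zeta^n,\dots,x_r\zeta^n$.

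More seriously, your plan to ``linearly combine'' the relations $u_i\zeta^{p^i}=0$ with the annihilator statements of Theorem~\ref{th:annihilators} does not work as stated: the $u_i$ are degree-one classes (built from $\lambda_j$'s and $y_j$'s), whereas you need relations in the degree-two classes $x_j,z_j$. You can bridge this via Steenrod operations—$\beta\cP^{\shalf}$ applied to $u_0\zeta=0$ gives $(-x_1+\sum_i\mu_i^p z_i)\zeta^p=0$—but then iterating produces relations at exponents $\zeta^{p},\zeta^{p^2},\dots,\zeta^{p^{r+s}}$, and the coefficient-matrix argument only yields $x_j\zeta^{p^{r+s}}=0$ and $z_j\zeta^{p^{r+s}}=0$. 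Since $p^{r+s}>p^{r+s-1}(p-1)$, this does not prove the theorem. Your alternative claim, that the annihilator of $Y^n$ for $n=p^{r+s-1}(p-1)$ contains the full regular sequence of Theorem~\ref{th:annihilators}, is not justified: $n$ is not of the form $p^i-1$, and the Frobenius/Steinberg factorization you invoke does not produce a module of that shape.

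The paper's resolution is to avoid $S^n(V^\#)$ entirely and to work at the level of $\zeta^{p-1}$. In the central-extension spectral sequence, the differential $d_p\bigl((\lambda_1+\sum_i\mu_iy_i)\eta^{p-1}\bigr)$ lands on the base row; it is nonzero (by restricting to a minimal subgroup of $H$ and invoking Corollary~\ref{co:calculation}) and, by weight, has the form $\alpha\,\zeta^{p-1}$ for some degree-two class $\alpha$. Theorem~\ref{th:annihilators}, read at the $E_2$-page of the semidirect-product spectral sequence, then identifies $\alpha$ as a nonzero multiple of $-x_1+\sum_i\mu_i^pz_i$. Because the target lies on the base row, the nonzero differential forces $(-x_1+\sum_i\mu_i^pz_i)\zeta^{p-1}=0$ \emph{in $H^{*,*}(G,k)$}, not merely in an associated graded. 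From there one applies $\cP^{(p-1)/2},\cP^{p(p-1)/2},\dots$ directly in cohomology to obtain $r+s$ relations at exponents $p^{k-1}(p-1)$; the invertibility of the resulting coefficient matrix (as in Theorem~\ref{th:rank-variety}) then isolates each $x_j\zeta^{n}=0$ and $z_j\zeta^{n}=0$ with $n=p^{r+s-1}(p-1)$.
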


\begin{proof}
In contrast with the case $H=\bbG_{a(1)}$ studied in Section
\ref{se:Ga(1)}, for more general $H$ we only have one copy of $\mu_p=\bbG_{m(1)}$ acting
as automorphisms. This acts by scalar multiplication on the generators
$u$ and $v$ of $k(\bbG_a^- \times \bbG_a^-)$ and centralizes $H$. So it
also acts by scalar multiplication on the generators $\zeta$ and
$\eta$ in $H^{1,1}(\bbG_a^-\times \bbG_a^-,k)=k[\zeta,\eta]$. As in
Section \ref{se:Ga(1)} we use weights in $\bbZ/(p-1)$ for this action.
So $\Vert\zeta\Vert=\Vert\eta\Vert=1$, and everything in
$H^{*,*}(H,k)$ has weight zero.

We compare two spectral sequences. The first is the spectral sequence
\begin{equation}\label{eq:ss1}
H^{*,*}(\bbG_a^- \times H, H^{*,*}(\bbG_a^-,k)) \Rightarrow H^{*,*}(G,k),
\end{equation}
associated with the central extension 
\[ 1 \to \bbG_a^- \to G \to \bbG_a^- \times H \to 1\] 
The second is the spectral sequence of the semidirect product
\begin{equation}\label{eq:ss2}
H^{*,*}(H,H^{*,*}(\bbG_a^- \times \bbG_a^-,k))\Rightarrow H^{*,*}(G,k).
\end{equation}
As in Section \ref{se:Ga(1)}, the differentials in these spectral
sequences have to preserve weights for the action of $\mu_p$.

In the first spectral sequence \eqref{eq:ss1}, we have 
\[ d_2(\eta) = (\lambda_1+\mu_1 y_1+\dots + \mu_s y_s)\zeta. \]
Applying the Kudo transgression theorem, we get
\[ d_{p+1}(\eta^p) = \cP^\shalf d_2(\eta) = (\lambda_2 + \mu_1^p y_1 + \dots + \mu_s^p
  y_s)\zeta^p. \]
Continuing this way,
\begin{align*} 
d_2(\eta) &= (\lambda_1+\qquad\qquad\qquad\mu_1 y_1\ +\ \dots\  +\  \mu_s
            y_s)\zeta. \\
d_{p+1}(\eta^p) &= \qquad (\lambda_2 +\qquad\qquad 
\mu_1^p y_1\ +\ \dots\ +\ \mu_s^p y_s)\zeta^p \\
\cdots& \qquad\qquad\qquad\ddots\qquad\qquad\cdots \\
d_{p^{r-1}+1}(\eta^{p^{r-1}}) & =\qquad\qquad\qquad (\lambda_r + \mu_1^{p^{r-1}}\!y_1 +
\dots + \mu_s^{p^{r-1}}\!y_s)\zeta^{p^{r-1}} \\
d_{p^r+1}(\eta^{p^r}) &=\qquad\qquad\qquad\qquad\ (\mu_1^{p^r}y_1\ +
\ \dots\ +\ \mu_s^{p^r}y_s)\zeta^{p^r} \\
\cdots & \qquad\qquad\qquad\qquad\qquad\qquad\cdots \\
d_{p^{r+s-1}+1}(\eta^{p^{r+s-1}}) &=\qquad\qquad\qquad\qquad\ (\mu_1^{p^{r+s-1}}\!y_1+\dots+
\mu_s^{p^{r+s-1}}\!y_s)\zeta^{p^{r+s-1}}
\end{align*}
and finally $d_{p^{r+s}}(\eta^{p^{r+s}})$ is in the ideal generated by
the previous ones, so $\eta^{p^{r+s}}$ is a universal cycle.

Applying Corollary~\ref{co:calculation} to the In the restriction of the first 
spectral sequence \ref{eq:ss1} to the
semidirect product of $\bbG_a^-\times \bbG_a^-$ with a minimal
subgroup of $H$ we conclude that 
\begin{equation}\label{eq:dp} 
d_p((\lambda_1+\mu_1y_1+\dots +\mu_sy_s)\eta^{p-1}) 
\end{equation}
is non-zero. It has to be something of weight $p-1$, and is therefore
something times $\zeta^{p-1}$.

Now, in the second spectral sequence \ref{eq:ss2}, Theorem
\ref{th:annihilators} shows that the element 
\[ -x_1+\mu_1^p z_1+\dots + \mu_s^p z_s \]
on the base annihilates $\zeta^{p-1}$ on the fibre in the $E_2$
page. This means that in $H^{*,*}(G,k)$, this product is zero modulo
smaller powers of $\zeta$. 

Putting these two pieces of information together, we see that
\eqref{eq:dp} has to be a non-zero multiple of 
$(-x_1+\mu_1^p z_1+\dots + \mu_s^p z_s)\zeta^{p-1}$.
%\[  d_p((\lambda_1+\mu_1y_1+\dots +\mu_sy_s)\eta^{p-1}) =
%\nu(-x_1+\mu_1^p z_1+\dots + \mu_s^p z_s)\zeta^{p-1}. \]
Therefore, in $H^{*,*}(G,k)$ we have the relation
\[ (-x_1+\mu_1^p z_1+\dots + \mu_s^p z_s)\zeta^{p-1}=0. \]

We now apply Steenrod operations to this relation to obtain further relations.
Applying $\cP^{\frac{p-1}{2}}$, we obtain
\[  (-x_2+\mu_1^{p^2} z_1+\dots + \mu_s^{p^2}
  z_s)\zeta^{p^2-p}=0. \]

Continuing this way, applying $\cP^{\frac{p(p-1)}{2}},\cP^{\frac{p^2(p-1)}{2}},\dots$ we have

\begin{align*}
(-x_1+\qquad\qquad\qquad\quad\mu_1^p z_1+\dots + \mu_s^p z_s)\zeta^{p-1}\qquad&=0 \\
\qquad (-x_2+\qquad\quad\mu_1^{p^2} z_1+\dots + \mu_s^{p^2} z_s)\zeta^{p(p-1)}\quad&=0 \\
\cdots\qquad\qquad\qquad\cdots& \\
\qquad\qquad\quad(-x_r+\mu_1^{p^r}z_1+\dots+\mu_s^{p^r}z_s)\zeta^{p^{r-1}(p-1)}&=0  \\
\qquad \qquad\qquad\quad\ (\mu_1^{p^{r+1}}\!z_1+\dots+\mu_s^{p^{r+1}}\!z_s)\zeta^{p^r(p-1)}&=0 \\
\qquad\qquad\qquad\qquad\qquad\qquad\cdots& \\
\qquad\qquad\qquad\quad\ (\mu_1^{p^{r+s}}\!z_1+\dots+\mu_s^{p^{r+s}}\!z_s)\zeta^{p^{r+s-1}(p-1)}&=0.
\end{align*}

\iffalse
Having found this, we apply Proposition 2.5 of Sawka
\cite{Sawka:1982a} to deduce that
\begin{align*}
 d_{p^2}((\lambda_2+\mu_1^py_1 + \dots + \mu_s^py_s)\eta^{p^2-1}) 
&=d_{p^2}\cP^{\frac{p-1}{2}}((\lambda_1+\mu_1y_1+\dots +\mu_sy_s)\eta^{p-1}) \\
&=\cP^{\frac{p-1}{2}}d_p((\lambda_1+\mu_1y_1+\dots +\mu_sy_s)\eta^{p-1}) \\
&=\nu  (-x_2+\mu_1^{p^2} z_1+\dots + \mu_s^{p^2} z_s)\zeta^{p^2-1}. 
\end{align*}

Continuing this way, we have
{\small
\begin{align*}
d_p((\lambda_1+\mu_1y_1+\dots +\mu_sy_s)\eta^{p-1}) &=
\nu(-x_1+\qquad\qquad\quad\mu_1^p z_1+\dots + \mu_s^p z_s)\zeta^{p-1} \\
d_{p^2}((\lambda_2+\mu_1^py_1 + \dots + \mu_s^py_s)\eta^{p^2-1}) &=
\qquad\nu  (-x_2+\qquad\quad\mu_1^{p^2} z_1+\dots + \mu_s^{p^2} z_s)\zeta^{p^2-1} \\
\cdots\qquad&\qquad\qquad\qquad\ddots \\
d_{p^r}((\lambda_r+\mu_1^{p^{r-1}}\!y_1+\dots+\mu_s^{p^{r-1}}\!y_s)\eta^{p^r-1})&=
\qquad\qquad\quad\nu(-x_r+\mu_1^{p^r}z_1+\dots+\mu_s^{p^r}z_s)\zeta^{p^r-1}  \\
d_{p^{r+1}}((\mu_1^{p^r}y_1+\dots+\mu_s^{p^r}y_s)\eta^{p^{r+1}-1})&=
\qquad \qquad\qquad\quad\ \nu(\mu_1^{p^{r+1}}\!z_1+\dots+\mu_s^{p^{r+1}}\!z_s)\zeta^{p^{r+1}-1} \\
\cdots\qquad&\qquad\qquad\qquad\qquad\qquad\qquad\cdots \\
d_{p^{r+s}}((\mu_1^{p^{r+s-1}}\!\!y_1+\dots+\mu_s^{p^{r+s-1}}\!\!y_s)\eta^{p^{r+s}-1}) &=
\qquad\qquad\qquad\quad\ \nu(\mu_1^{p^{r+s}}\!z_1+\dots+\mu_s^{p^{r+s}}\!z_s)\zeta^{p^{r+s}-1}.
\end{align*}
}%
\fi
Every linear combination of $x_1,\dots,x_r,z_1,\dots,z_s$ is spanned
by the coefficients of the powers of $\zeta$. In
particular, this shows that every $x_i\zeta^{p^{r+s-1}(p-1)}$ and every
$z_i\zeta^{p^{r+s-1}(p-1)}$ is zero in $H^{*,*}(G,k)$. This completes the
proof.
\end{proof} 

As a last result of this note, we deduce a corollary to be used in 
\cite{Benson/Iyengar/Krause/Pevtsova:bikp5}. 

\begin{corollary}\label{co:bikp5}
Let $G$ be a finite unipotent supergroup scheme, with a normal sub-super\-group
scheme $N$ such that $G/N \cong \bbG_a^- \times \bbG_{a(r)} \times
(\bbZ/p)^s$. If the inflation map $H^{1,*}(G/N,k)\to H^{1,*}(G,k)$ is
an isomorphism and $H^{2,1}(G/N,k) \to H^{2,1}(G,k)$ is
not injective then there exists a non-zero element $\zeta\in
H^{1,1}(G,k)$ such that for all $u\in H^{1,0}(G,k)$ we have $\beta\cP^0(u).\zeta^{p^{r+s-1}(p-1)}=0$.
\end{corollary}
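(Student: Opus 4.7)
The plan is to reduce to Theorem~\ref{th:main} by constructing a suitable quotient of $G$ to which that theorem applies, and then pulling back the cohomological relation through inflation.

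First, I would apply the five-term exact sequence of the Lyndon--Hochschild--Serre spectral sequence for $1 \to N \to G \to G/N \to 1$,
\[ 0 \to H^{1,*}(G/N,k) \to H^{1,*}(G,k) \to H^{1,*}(N,k)^{G/N} \xrightarrow{d_2} H^{2,*}(G/N,k) \to H^{2,*}(G,k). \]
The hypothesis that inflation is an isomorphism in degree $(1,*)$ forces the transgression $d_2$ to be injective on $G/N$-invariants, and the non-injectivity of the $H^{2,1}$-inflation then yields a non-zero class $\beta \in H^{1,1}(N,k)^{G/N}$ with $\alpha := d_2(\beta)$ a non-zero element of the kernel of $H^{2,1}(G/N,k) \to H^{2,1}(G,k)$.

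Second, I would interpret $\beta$ as a $G$-equivariant surjection $\pi \colon N \twoheadrightarrow \bbG_a^-$ (with the target in internal degree one). Setting $N' := \ker \pi$, which is normal in $G$ by equivariance, gives a central extension $1 \to \bbG_a^- \to G/N' \to G/N \to 1$ classified by $\alpha$. With $G/N \cong \bbG_a^- \times H'$ where $H' = \bbG_{a(r)} \times (\bbZ/p)^s$ is purely even, the K\"unneth formula together with $H^{2,1}(\bbG_a^-,k) = 0 = H^{2,1}(H',k)$ yields $H^{2,1}(G/N,k) = \zeta_0 \cdot H^{1,0}(H',k)$, where $\zeta_0$ generates $H^{1,1}(\bbG_a^-,k)$. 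Write $\alpha = \zeta_0 \cdot \gamma$ with $0 \ne \gamma \in H^{1,0}(H',k)$. The preimage in $G/N'$ of the $\bbG_a^-$-factor of $G/N$ is a central extension of $\bbG_a^-$ by $\bbG_a^-$ whose obstruction lives in the vanishing group $H^{2,1}(\bbG_a^-,k)$, and so is isomorphic to $\bbG_a^- \times \bbG_a^-$. Matching the extension class $\zeta_0 \cdot \gamma$ against the Hopf-algebraic description of Section~\ref{se:semidirect} then identifies $G/N'$ with the semidirect product $(\bbG_a^- \times \bbG_a^-) \rtimes H'$ in which the action of $H'$ on the non-central $\bbG_a^-$ is controlled by $\gamma$.

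Third, I would let $K \subseteq H'$ denote the kernel of this action and let $\tilde K$ denote its preimage in $G$, so $\tilde K \supseteq N'$. Then $G / \tilde K \cong (\bbG_a^- \times \bbG_a^-) \rtimes H''$ with $H'' := H'/K = \bbG_{a(r'')} \times (\bbZ/p)^{s''}$ acting faithfully and $r''+s'' \le r+s$. Theorem~\ref{th:main} applied to $G/\tilde K$ produces a non-zero $\zeta' \in H^{1,1}(G/\tilde K,k)$ satisfying $\beta\cP^0(u') (\zeta')^{p^{r''+s''-1}(p-1)} = 0$ for all $u' \in H^{1,0}(G/\tilde K,k)$. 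Inflating $\zeta'$ to $G$ gives a non-zero $\zeta \in H^{1,1}(G,k)$ (non-zero by the first hypothesis, which in particular forces injectivity of the inflation in internal degree one), and since $\zeta^{p^{r+s-1}(p-1)}$ is a power of $\zeta^{p^{r''+s''-1}(p-1)}$, the inflated relation delivers the conclusion.

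The main obstacle is the second step, namely identifying $G/N'$ as a supergroup-scheme semidirect product of the exact form treated in Section~\ref{se:semidirect}, rather than merely matching the cohomology class $\alpha$. This requires a Hopf-algebraic computation verifying that $\zeta_0 \cdot \gamma$ corresponds precisely to the twisted multiplication $tu = ut + \gamma(t) v$ on group algebras, for which the explicit bar-resolution calculation of the extension class needs to be compared with the defining relations of the smash product.
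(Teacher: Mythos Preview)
Your overall strategy---five-term sequence, cut $N$ down to a single $\bbG_a^-$ via a $G$-invariant class in $H^{1,1}(N,k)$, recognise the quotient as a semidirect product, then invoke Theorem~\ref{th:main}---matches the paper's. The step you flag as the ``main obstacle'' is handled in the paper more cheaply than you anticipate: since the class $\alpha=\zeta_0\gamma$ restricts to zero on each factor $\bbG_a^-$ and $H'$ of $G/N$, both restricted extensions split, and the resulting subgroups $\bbG_a^-\times\bbG_a^-$ and $H'$ exhibit $G/N'$ as a semidirect product in the sense of Section~\ref{se:semidirect}. No bar-resolution comparison is needed.

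Where you depart from the paper is your third step. The paper simply writes ``this puts us in the situation of Theorem~\ref{th:main}'' and stops, without passing to the quotient by the kernel $K$ of the action. You are right to notice that Theorem~\ref{th:main} is stated for a \emph{faithful} action, and your instinct to quotient by $K$ is sound---but it introduces a gap you do not close. Theorem~\ref{th:main} applied to $G/\tilde K$ yields $\beta\cP^0(u')\cdot(\zeta')^{p^{r''+s''-1}(p-1)}=0$ only for $u'\in H^{1,0}(G/\tilde K,k)$, so after inflation you obtain the relation only for those $u$ lying in the image of $H^{1,0}(G/\tilde K,k)\to H^{1,0}(G,k)$. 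By the first hypothesis $H^{1,0}(G,k)\cong H^{1,0}(H',k)$ has dimension $r+s$, whereas the image coming through $H''$ has dimension at most $r''+s''$; when $K\neq 1$ these differ, and for $u$ outside that image your argument says nothing. Concretely, take $G=\bigl((\bbG_a^-\times\bbG_a^-)\rtimes\bbZ/p\bigr)\times\bbZ/p$ with $N$ the central $\bbG_a^-$: here $K$ is the second $\bbZ/p$, and for $u=y_2$ one has $\beta\cP^0(u)\cdot\zeta^{p(p-1)}=z_2\,\zeta^{p(p-1)}\neq 0$, since $\zeta$ has infinite multiplicative order in the cohomology of the first factor. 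So the inflated relation alone cannot deliver the conclusion for all $u$; you would need a separate argument covering the classes supported on $K$. (The paper's own proof avoids this by not introducing $K$, at the cost of not explicitly verifying faithfulness.)
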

\begin{remark} The condition  that the inflation map is an isomorphism on $H^{1,*}$ effectively 
decodes the fact that $G/N$ is the maximal quotient of prescribed form.  See 
\cite{Benson/Iyengar/Krause/Pevtsova:bikp5} for more details on how it arises. 
\end{remark}

\begin{proof} Recall that $H^{*,*}(G.N,k) \cong  k[\zeta]\otimes k[x_1,\dots,x_r] \otimes
\Lambda(\lambda_1,\dots,\lambda_r) \otimes
k[z_1,\dots,z_s] \otimes \Lambda(y_1,\dots,y_s)$ with $\zeta$ in degree $(1,1)$ and the 
rest of the generators in even internal degree.  
If $H^{2,1}(G/N,k)\to H^{2,1}(G,k)$ is not an
isomorphism then the kernel contains an element of the form $u\zeta$ with
$u\in H^{1,0}(G/N,k)$, $\zeta\in H^{1,1}(G/N,k)$. The five term sequence corresponding 
to the extension $1 \to N \to G \to G/N \to 1$, 
\[\xymatrix{H^{1,1}(G/N,k)\ar[r] & H^{1,1}(G,k) \ar[r] & H^{1,1}(N,k)^G \ar[r]^-{d_2} &  H^{2,1}(G/N,k) \ar[r] &H^{2,1}(G,k)}\] 
gives an element $0 \ne\eta\in H^{1,1}(N,k)^G$
such that $d_2(\eta)=u\zeta$. Now $H^{1,1}(N,k) \cong \Hom(N,\bbG_a^-)$ 
(see \cite[Lemma 4.1]{Benson/Iyengar/Krause/Pevtsova:bikp5}), so 
corresponding to $\eta$ there is a $G$-invariant surjective homomorphism
$N \to \bbG_a^-$. Letting $N_1\le N$ be the kernel of this
homomorphism, it follows that $N_1$ is normal in $G$. Looking at the
map of five term sequences given by factoring out $N_1$, we see that
we might as well replace $G$ by $G/N_1$ and $N$ by $N/N_1$, since the
hypotheses of the corollary are preserved, and the conclusion for
$G/N_1$ inflates to the same conclusion for $G$.

We are left in a situation where we have a short exact sequence
\[ \xymatrix{1 \ar[r] & N \ar[r] \ar[d]^\cong & G \ar[r] \ar[d]^= &
    G/N \ar[r] \ar[d]^\cong & 1 \\
    1 \ar[r] & \bbG_a^- \ar[r] & G \ar[r] & \bbG_a^- \times
    \bbG_{a(r)} \times (\bbZ/p)^s \ar[r] & 1.} \]
The fact that $d_2(\eta)=u\zeta$ means that the restrictions of
$d_2(\eta)$ to the two factors $\bbG_a^-$ and
$\bbG_{a(r)}\times(\bbZ/p)^s$ of the quotient are both zero.
So the restriction of the extension to these two factors gives abelian
subgroups. It is then easy to see that the restricted extensions
split, and so $G$ has subgroups $\bbG_a^-\times \bbG_a^-$ and
$\bbG_{a(r)} \times (\bbZ/p)^s$ satisfying the conditions for a
semidirect product. This puts us in the situation of Theorem
\ref{th:main}, and the Corollary is proved.
\end{proof}

\bibliographystyle{amsplain}
%\bibliography{../repcoh}
\bibliography{repcoh}

\newcommand{\noopsort}[1]{}
\providecommand{\bysame}{\leavevmode\hbox to3em{\hrulefill}\thinspace}
\providecommand{\MR}{\relax\ifhmode\unskip\space\fi MR }
% \MRhref is called by the amsart/book/proc definition of \MR.
\providecommand{\MRhref}[2]{%
  \href{http://www.ams.org/mathscinet-getitem?mr=#1}{#2}
}
\providecommand{\href}[2]{#2}
\begin{thebibliography}{10}

\bibitem{Auslander:1978a}
M.~Auslander, \emph{{Functors and morphisms determined by objects}},
  Representation theory of algebras, Lecture Notes in Pure and Applied Math,
  vol.~37, Dekker, New York, 1978, pp.~1--244.

\bibitem{Bendel:2001a}
C.~Bendel, \emph{{Cohomology and projectivity of modules for finite group
  schemes}}, Math.\ Proc.\ Camb.\ Phil.\ Soc. \textbf{131} (2001), 405--425.

\bibitem{Bendel/Friedlander/Suslin:1997b}
C.~Bendel, E.~Friedlander, and A.~Suslin, \emph{{Support varieties for
  infinitesimal group schemes}}, J.~Amer.\ Math.\ Soc. \textbf{10} (1997),
  729--759.

\bibitem{Benson:2001a}
D.~J. Benson, \emph{{Modules with injective cohomology, and local duality for a
  finite group}}, New York Journal of Mathematics \textbf{7} (2001), 201--215.

\bibitem{Benson/Carlson:1994a}
D.~J. Benson and J.~F. Carlson, \emph{{Projective resolutions and Poincar\'e
  duality complexes}}, Trans.\ Amer.\ Math.\ Soc. \textbf{132} (1994),
  447--488.

\bibitem{Benson/Iyengar/Krause/Pevtsova:bikp5}
D.~J. Benson, S.~B. Iyengar, H.~Krause, and J.~Pevtsova,
  \emph{{\noopsort{bikp5}Detecting nilpotence and projectivity over finite
  supergroup schemes}}, Preprint.

\bibitem{Benson/Iyengar/Krause/Pevtsova:bikp4}
\bysame, \emph{{\noopsort{bikp4}Local duality for representations of finite
  group schemes}}, Preprint, 2018.

\bibitem{Campbell/Shank/Wehlau:2013a}
H.~E.~A. Campbell, R.~J. Shank, and D.~L. Wehlau, \emph{{Rings of invariants
  for modular representations of elementary abelian $p$-groups}},
  Transformation Groups \textbf{18} (2013), no.~1, 1--22.

\bibitem{Drupieski:2013a}
C.~Drupieski, \emph{{Cohomological finite generation for restricted Lie
  superalgebras and finite supergroup schemes}}, Representation Theory
  \textbf{17} (2013), 469--507.

\bibitem{Goto/Watanabe:1978a}
Shiro Goto and Keiichi Watanabe, \emph{On graded rings. {I}}, J. Math. Soc.
  Japan \textbf{30} (1978), no.~2, 179--213.

\bibitem{Greenlees:1995a}
J.~P.~C. Greenlees, \emph{{Commutative algebra in group cohomology}}, J.~Pure
  \& Applied Algebra \textbf{98} (1995), 151--162.

\bibitem{Iyengar:2008a}
S.~Iyengar (ed.), \emph{{Twenty-four hours of local cohomology}}, Graduate
  Studies in Mathematics, American Math.\ Society, 2008.

\bibitem{Jantzen:1987a}
J.~C. Jantzen, \emph{{Representations of algebraic groups}}, Academic Press,
  New York/London, 1987.

\bibitem{Jantzen:2003a}
\bysame, \emph{{Representations of algebraic groups}}, American Math.\ Society,
  2003, 2nd ed.

\bibitem{Krause:2003a}
H.~Krause, \emph{{A short proof of Auslander's defect formula}},  \textbf{365}
  (2003), 267--270.

\bibitem{May:1970a}
J.~P. May, \emph{{A general algebraic approach to Steenrod operations}}, The
  Steenrod algebra and its applications (F.~P. Peterson, ed.), Lecture Notes in
  Mathematics, vol. 168, Springer-Verlag, Ber\-lin/New York, 1970,
  pp.~153--231.

\bibitem{Molnar:1977a}
R.~K. Molnar, \emph{{Semi-direct products of Hopf algebras}}, J.~Algebra
  \textbf{47} (1977), 29--51.

\bibitem{Ravenel:1986a}
D.~C. Ravenel, \emph{{Complex cobordism and the stable homotopy groups of
  spheres}}, Academic Press, New York/London, 1986.

\bibitem{Wilkerson:1981a}
C.~W. Wilkerson, \emph{{The cohomology algebras of finite dimensional Hopf
  algebras}}, Trans.\ Amer.\ Math.\ Soc. \textbf{264} (1981), no.~1, 137--150.

\end{thebibliography}

\end{document}